\documentclass[11pt,a4paper,twoside,reqno]{amsart}
\pdfoutput=1
\title{New examples of $G_2$-instantons on $\R^4 \times S^3$} 
\author{Izar Alonso}
\address[I. Alonso]{Department of Mathematics \\ Rutgers University \\Piscataway, NJ 08854\\ USA}
\email{izar.alonso@rutgers.edu}
\subjclass[2010]{53C07}
\keywords{$G_2$-instantons, Cohomogeneity one actions, Gauge Theory}
\thanks{The author was supported by the EPSRC (Grant No.\ 2271784) and the National Science Foundation under Grant No.\ DMS-1928930}

\usepackage[dutch, english]{babel}
\usepackage{fancyhdr,amsmath,amssymb,amsbsy,amsfonts,latexsym,float,color,amsthm,mathabx, hyperref}
\usepackage{graphicx,cite,enumerate,tikz-cd,cancel,anysize,tensor,wasysym,comment,mathtools,enumitem} 
\usetikzlibrary{babel}

\numberwithin{equation}{section} \setcounter{tocdepth}{1}

\setcounter{tocdepth}{1}
\setlength{\oddsidemargin}{25pt} \setlength{\evensidemargin}{25pt}
\setlength{\textwidth}{400pt} \setlength{\textheight}{650pt}
\setlength{\topmargin}{0pt}

\newcommand{\R}{\mathbb{R}}
\newcommand{\C}{\mathbb{C}}
\newcommand{\tr}{\text{tr}}
\newcommand{\iprod}{\mathbin{\raisebox{\depth}{\scalebox{1}[-2]{$\lnot$}}}}

\newtheorem{theorem}{Theorem}[section]
\theoremstyle{definition}
\newtheorem{defi}[theorem]{Definition}
\newtheorem{lemma}[theorem]{Lemma}
\newtheorem{ex}[theorem]{Example}

\newtheorem{prop}[theorem]{Proposition}
\newtheorem{cor}[theorem]{Corollary}
\newtheorem{remark}[theorem]{Remark}

\numberwithin{equation}{section}

\begin{document}

\begin{abstract}
    We study the existence of $\text{SU}(2)^2$-invariant $G_2$-instantons on $\mathbb{R}^4 \times S^3$ with the coclosed $G_2$-structures found on \cite{Alo22}. 
    We find an explicit 1-parameter family of $\text{SU}(2)^3$-invariant $G_2$-instantons on the trivial bundle on $\mathbb{R}^4 \times S^3$ and study its ``bubbling'' behaviour. We prove the existence a 1-parameter family on the identity bundle.
    We also provide existence results for locally defined $\text{SU}(2)^2$-invariant $G_2$-instantons.
\end{abstract}

\maketitle

\section{Introduction}

A \textit{$G_2$-instanton} is a special kind of connection on a Riemannian 7-manifold.
More concretely, a connection $A$ on a principal bundle over a manifold with a $G_2$-structure $\varphi$ is a $G_2$-instanton if 
$$
F_A \wedge \psi =0,
$$
where $F_A$ is the curvature of $A$ and $\psi= {*}_\varphi \varphi$, ${*}_\varphi$ being the Hodge star operator induced by $\varphi$.
They are analogues of anti-self-dual connections in four-dimensional manifolds, 
and also similar to flat connections over three-manifolds, in the sense that they are critical points of a Chern--Simons type functional, the Yang--Mills energy functional. 
The study of $G_2$-instantons is a research topic of interest for the gauge theory community, due to Donaldson–Thomas’ suggestion \cite{DT98} that it may be possible to use $G_2$-instantons to define invariants for $G_2$-manifolds.
Some details of this idea were later worked out by Donaldson and Segal \cite{DS11}.
Although the naive count of $G_2$-instantons on a compact $G_2$-manifold cannot produce a deformation-invariant, one may hope that by counting $G_2$-instantons together with the Seiberg--Witten monopoles on associative submanifolds the mutual degenerations will solve the issue, while giving a relation between $G_2$-instantons and the Seiberg--Witten monopoles \cite{Haydys17}.
Other reason to be interested in $G_2$-instantons is their appearance in heterotic string theory, as they appear in the \textit{heterotic $G_2$ system} (see for example \cite{dlOLM21}). 
This system involves a pair of connections on some vector bundle and the tangent bundle which are respectively $G_2$-instantons, and whose curvatures are coupled by the \textit{heterotic Bianchi identity}.

The first examples of $G_2$-instantons over compact manifolds were constructed by Walpuski \cite{Walpuski13}, over Joyce’s $G_2$-manifolds constructed using the ``Generalised Kummer construction" (\cite{Joyce96}). 
More recently, Platt \cite{Platt22} constructed $G_2$-instantons on Joyce and Karigiannis's generalization of the previous manifolds \cite{JK21}.
On the compact “twisted connected sums” $G_2$-manifolds (\cite{Kovalev03, CHNP15}),  
Sá Earp, Walpuski and Ménet--Sá Earp--Nordström \cite{SEW15, Walpuski16, MNSE21} gave an abstract construction of $G_2$-instantons as well as examples.
On noncompact complete holonomy $G_2$-manifolds, the first examples of $G_2$-instantons where found on the spinor bundle of $S^3$ with the Bryant--Salamon metric \cite{BS89} by Clarke in \cite{Clarke14}; these examples were later generalized in \cite{Oliveira14, Lotay17}.
Very recently, Stein and Turner \cite{SteinTurner} completed the study of $\text{SU}(2)^3$-invariant $G_2$-instantons over the spinor bundle of $S^3$ \cite{Lotay17} with the Bryant--Salamon metric by constructing a new 1-parameter family of examples.
In \cite{MNT22}, Matthies, Nordström and Turner construct a 1-parameter family of $G_2$-instantons on the the asymptotically conical limit of the $\C_7$ family of $G_2$-holonomy metrics of \cite{FHN21}.

All of the constructions on non-compact manifolds used the fact that the manifolds had a cohomogeneity one structure. 
A Riemannian manifold is of \textit{cohomogeneity one} if there is a compact Lie group $H$ acting by isometries with a generic orbit of codimension one. 
The collection of the $H$ with its isotropy groups $H \supset K \supset K_0$ ($H \supset K_+, K_- \supset K_0$ for a compact manifold) is called the \textit{group diagram}, and we can recover the cohomogeneity one manifold from it.
We refer the reader for example to \cite{Berand1982} for a full description of cohomogeneity one manifolds.
Because of the significance of cohomogeneity one techniques to construct examples in the context of special holonomy (from \cite{BS89} to \cite{FHN21}), and in particular for the study of $G_2$-instantons, it is natural to continue exploiting cohomogeneity one symmetries to produce more examples of $G_2$-instantons.

A manifold admitting a $G_2$-instanton has a positive 3-form $\varphi$ known as a $G_2$-structure, which has to be \textit{coclosed}. More concretely, this means that $d {*}_\varphi \varphi =0$. 
In \cite{Alo22}, the author constructed a large family of coclosed $G_2$-structures over the manifold $\R^4 \times S^3$, which are invariant under the cohomogeneity one action of $\text{SU}(2)^2$. 
In this paper, we will construct and classify $G_2$-instantons for the $G_2$-structures of this family.
This expands the study of $\text{SU}(2)^2$-invariant $G_2$-instantons initiated in \cite{Lotay17}
by considering coclosed but not necessarily torsion-free $G_2$-structures. 
Removing this restriction broadens our search, since $\text{SU}(2)^2$-invariant $G_2$-structures and $G_2$-instantons considered previously had an extra $\text{SU}(2)$ or $\text{U}(1)$ symmetry.

\subsection*{Layout and main results}
We start Section \ref{sec:prelim} by giving a brief review on $G_2$-instantons and homogeneous bundles in sections \ref{sec:G2inssummary} and \ref{sec:hombundles}, respectively.
In Section \ref{sec:Alo22summary} we describe the geometry of $\R^4 \times S^3$ and the coclosed $G_2$-structures found in \cite{Alo22}.

In Section \ref{sec:G2instantons} we present a summary of the setup of the problem of finding $G_2$-instantons on a vector bundle over a cohomogeneity one manifold.
In Section \ref{sec:generaleqns}, we state the general equations for an $\text{SU}(2)^2$-invariant connection to be a $G_2$-instanton. Then, in Section \ref{sec:extension} we present the conditions for such connections to extend smoothly to the singular orbit. We consider the particular case when the structure group of the instanton is abelian in Section \ref{sec:abelian}.
From then on, the paper will focus on instantons with the simplest non-abelian group, $\text{SU}(2)$. There are only two $\text{SU}(2)^2$-homogeneous bundles with structure group $\text{SU}(2)$, which should be considered separately throughout most of the paper: the trivial bundle $P_1$ and the identity bundle $P_\text{id}$.

In Section \ref{sec:SU(2)^3}, we study
$G_2$-instantons on $\R^4 \times S^3$ with a larger symmetry group ($\text{SU}(2)^3$-invariant). 
First, we present the ODEs for $\text{SU}(2)^3$-invariant $G_2$-instantons in \ref{sec:3ODEs}. 
In \ref{sec:SU(2)^3coclosed}, we present our main results.
For every $\text{SU}(2)^3$-invariant coclosed $G_2$-structure on \cite{Alo22}, we prove an existence result of an explicit 1-parameter family of $G_2$-instantons, 
extending smoothly to the singular orbit on the trivial bundle $P_1$. Additionally we have a one parameter family of $G_2$-instantons extending smoothly on the identity bundle $P_{\text{id}}$, one of them also being explicit.
\begin{theorem}
Let $M=\R^4 \times S^3$, with a $\text{SU}(2)^3$-invariant coclosed $G_2$-structure given by $A_1$ and $b_0 >0$ as in \cite[Theorem 4.9]{Alo22}. 
There exists two 1-parameter families of smooth $\text{SU}(2)^3$-invariant $G_2$-instantons with gauge group $\text{SU}(2)$: $\theta^{x_1}$, $x_1 \in [0, \infty)$ extending to $P_1$; and $\theta_{y_0}$, $y_0 \in [-1/b_0, 1/b_0]$ extending to $P_\text{id}$.
\end{theorem}
In \ref{sec:behaviour} we analyze the behaviour of sequences of instantons found, which present a ``bubbling" behaviour, and the relation between all $G_2$-instantons encountered.

In Section \ref{sec:SU(2)^2ins}, we study the most general situation of $\text{SU}(2)^2$-invariant $G_2$-instantons. We derive a system of six ordinary differential equations for a connection to be a $G_2$-instanton and the conditions for it to extend smoothly to a singular orbit with bundle $P_1$ or $P_\text{id}$ in \ref{sec:SU(2)^2ODE}. In \ref{sec:P1} we give an existence result of a 3-parameter family of $G_2$-instantons in a neighbourhood of the singular orbit on the trivial bundle.
In \ref{sec:Pid} we study the existence of $G_2$-instantons in a neighbourhood of the singular orbit on the identity bundle, and find an additional 3-parameter family of $G_2$-instantons.

\subsection*{Acknowledgements} 
I would like to thank my supervisors Andrew Dancer and Jason Lotay for the multiple helpful discussions, and for their constant support and advice. 
I would also like to thank Jakob Stein and Gon\c{c}alo Oliveira for helpful discussions. This material is based upon work supported by the National Science Foundation under Grant No.\ DMS-1928930 while the author was in residence at the Simons Laufer Mathematical Sciences Institute (previously known as MSRI) in Berkeley, California, during the Fall 2022 semester. The author was supported by the EPSRC (Grant No.\ 2271784).

\section{Preliminaries}\label{sec:prelim}

\subsection{$G_2$-instantons}\label{sec:G2inssummary}

A \textit{$G_2$-structure} on a smooth seven-manifold $M$ is a smooth 3-form $\varphi$ on $M$ such that, at every $p \in M$, there exists a linear isomorphism $T_p M \cong \R^7$ with respect to which $\varphi_p \in \Lambda^3(T_p^* M)$ corresponds to the standard associative form $\varphi_0 = e^{123} -e^{167} -e^{527} -e^{563} -e^{415} -e^{426} -e^{437} \in \Lambda^3(\R^7)^*$, where $e^1, ..., e^7$ is the standard dual basis of $(\R^7)^*$.
A $G_2$-structure $\varphi$ induces a Riemannian metric $g_\varphi$ and volume form $\text{vol}_\varphi$ on $M$ such that
$$
(X \iprod \varphi ) \wedge (Y \iprod \varphi ) \wedge \varphi = -6g_\varphi (X,Y) \text{vol}_\varphi.
$$
We define the \textit{coassociative form} as the 4-form $\psi= *_\varphi \varphi$, where $*_\varphi$ is the Hodge star operator induced by $(g_\varphi, \text{vol}_\varphi)$.
We can classify $G_2$-structures in different types: we say that a $G_2$-structure $\varphi$ is \textit{closed} if $d \varphi =0$ and that it is \textit{coclosed} if $d \psi=0$. If it is both closed and coclosed, we say that it is torsion-free, and we call $(M, g_\varphi)$ a $G_2$-manifold. 
For $G_2$-manifolds, the metric is Ricci-flat and the holonomy group contained in the exceptional Lie group $G_2$.
$G_2$-manifolds have been extensively studied, examples of torsion-free $G_2$-structures have been found on non-compact \cite{BS89, Brandhuber01, Bogoyavlenskaya13, FHN21} and compact manifolds \cite{Joyce96, Kovalev03, CHNP15, JK21}.

In this paper, we let $M$ be a seven-dimensional manifold and $\varphi$ be a coclosed $G_2$-structure.
Let $P \rightarrow M$ be a principal bundle with structure group $G$. We assume that $G$, which is also denoted as the \textit{gauge group}, is a compact Lie group. Let $A$ be a connection over the principal bundle $P$.

\begin{defi}
A connection $A$ on $P$ is a \textit{$G_2$-instanton} if 
$$
F_A \wedge \psi=0.
$$
\end{defi}

\begin{remark}
For the definition of $G_2$-instanton, we do need to ask for the structure to be coclosed in order to not get an extra condition when differentiating $F_A \wedge \psi=0$, as the equation would then be over-determined, meaning that $G_2$-instantons would not exist. Most constructions of $G_2$-instantons are on $G_2$-manifolds.
\end{remark}

We can write a decomposition $\Omega^2 = \Omega_7^2 \oplus \Omega_{14}^2$ into irreducible $G_2$ representations of pointwise dimension, where
\begin{equation}
\begin{array}{rl}
    \Omega_7^2&= \{ \beta \in \Omega^2 \ | \ {*} (\varphi \wedge \beta)=-2 \beta \}, \\
    \Omega_{14}^2& = \{ \beta \in  \Omega^2 \ | \ {*} (\varphi \wedge \beta)= \beta \}= \{ \beta \in \Omega^2 \ | \ \beta \wedge \psi=0 \} \cong \mathfrak {g}_2.
\end{array}
\end{equation}
Then, a connection on a $G_2$-manifold is a $G_2$-instanton if and only if 
$$
F_A \wedge \varphi = - * F_A,
$$
or equivalently if $\pi_7(F_A)=0$, where $\pi_7: \Omega^2 \rightarrow \Omega^2_7$ is the orthogonal projection.

On a compact $G_2$-manifold, $G_2$-instantons minimize the Yang--Mills energy functional
$$
\mathcal{YM}(A)=\int_M \vert F_A \vert^2 = \int_M \tr (F_A \wedge * F_A)
$$
on the space of finite-energy connections on $P$.

\subsection{Homogeneous bundles}\label{sec:hombundles}

We will focus on instantons on principal $G$-bundles $P$ that are homogeneous bundles on the homogeneous orbits $H/K$, meaning that the action of $H$ on $H/K$ lifts to a $H$-action on $P$ which commutes with the action of $G$. 
These bundles are determined by their isotropy homomorphism $\lambda : K \rightarrow G$: 
\begin{equation}
   P_\lambda = H \times_{(K,\lambda)} G.
\end{equation}

The Lie algebra $\mathfrak{h}$ of $H$ has a reductive splitting with respect to $K$, which we write as $\mathfrak{h} = \mathfrak{k} \oplus \mathfrak{m}$. 
We call the \textit{canonical invariant connection} to the connection on the bundle $H \rightarrow H/K$ whose horizontal space is $\mathfrak{m}$. Its connection form $A_\lambda^\text{can} \in \Omega^1(H, \mathfrak{g})$ is the left-invariant translation of $d\lambda \oplus 0 : \mathfrak{k} \oplus \mathfrak{m} \rightarrow \mathfrak{g}$. This invariant connection induces a corresponding canonical connection on any $P_\lambda$. 
Wang's theorem classifies other invariant connections, which are in
correspondence with morphisms of $K$-representations:

\begin{theorem}\cite[Theorem 1]{Wang58}
There is a 1-1 correspondence between $G$-invariant connections on $P_\lambda$ and $K$-morphisms:
\begin{equation}
    \Lambda: (\mathfrak{m}, \text{Ad}) \rightarrow (\mathfrak{g}, \text{Ad} \circ \lambda).
\end{equation}
\end{theorem}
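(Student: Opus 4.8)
The plan is to reconstruct the classical argument for Wang's theorem by ``rigidifying'' an invariant connection through a single point of the total space. Here invariance is understood with respect to the lifted $H$-action on $P_\lambda$ (every principal connection being automatically equivariant under the structure group $G$, so the content is symmetry under $H$). Recall that a principal connection on $P_\lambda$ is a $\mathfrak{g}$-valued $1$-form $\omega$ reproducing the fundamental vector fields $\zeta_X$ of the $G$-action, with $\omega(\zeta_X)=X$, and satisfying $R_g^*\omega = \mathrm{Ad}(g^{-1})\omega$; invariance adds $L_h^*\omega = \omega$ for all $h \in H$. The key device is the inclusion $\iota \colon H \to P_\lambda$, $h \mapsto [h,e]$, which intertwines the two relevant actions: $\iota(h'h) = L_{h'}\iota(h)$ and $\iota(hk) = \iota(h)\cdot\lambda(k)$ for $h,h' \in H$ and $k \in K$. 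First I would pull an invariant connection back along $\iota$, setting $\alpha = \iota^*\omega \in \Omega^1(H,\mathfrak{g})$.

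Next I would extract the algebraic data from $\alpha$. Left-invariance of $\omega$ forces $\alpha$ to be left-invariant on $H$, hence determined by the single linear map $\alpha_e \colon \mathfrak{h}\to\mathfrak{g}$. Differentiating $\iota(\exp tX) = \iota(e)\cdot\lambda(\exp tX)$ for $X \in \mathfrak{k}$ shows that $\iota$ carries $\mathfrak{k}$-directions to fundamental vector fields with generator $d\lambda$, so the axiom $\omega(\zeta_X)=X$ pins down $\alpha_e|_{\mathfrak{k}} = d\lambda$. Combining $L_h^*\omega=\omega$ with $R_{\lambda(k)}^*\omega = \mathrm{Ad}(\lambda(k)^{-1})\omega$ and evaluating $R_k^*\alpha$ at $e$ gives $\alpha_e\circ\mathrm{Ad}(k) = \mathrm{Ad}(\lambda(k))\circ\alpha_e$, i.e.\ $\alpha_e$ intertwines $\mathrm{Ad}$ on $\mathfrak{h}$ with $\mathrm{Ad}\circ\lambda$ on $\mathfrak{g}$. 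Using the reductive splitting $\mathfrak{h}=\mathfrak{k}\oplus\mathfrak{m}$ with $\mathfrak{m}$ an $\mathrm{Ad}(K)$-invariant complement, the restriction $\Lambda := \alpha_e|_{\mathfrak{m}}$ is then precisely a morphism $(\mathfrak{m},\mathrm{Ad}) \to (\mathfrak{g},\mathrm{Ad}\circ\lambda)$, while the $\mathfrak{k}$-part is rigidly fixed to $d\lambda$ (recovering the canonical connection $A_\lambda^{\mathrm{can}}$ when $\Lambda=0$, since $d\lambda$ is automatically equivariant as $\lambda$ is a homomorphism). This defines the map from invariant connections to $K$-morphisms.

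For the reverse direction I would begin with an arbitrary $K$-morphism $\Lambda\colon\mathfrak{m}\to\mathfrak{g}$, form $\alpha_e = d\lambda\oplus\Lambda$ on $\mathfrak{h}=\mathfrak{k}\oplus\mathfrak{m}$, extend to the left-invariant form $\alpha$ on $H$, and then define $\omega$ on $P_\lambda = H\times_{(K,\lambda)}G$ by transporting $\alpha$ through the representative $[h,g]$ and the right $G$-action. I expect the main obstacle to be exactly this descent step: one must check that the prescription is independent of the choice of representative under $(h,g)\sim(hk,\lambda(k)^{-1}g)$, which is precisely where the normalization $\alpha_e|_{\mathfrak{k}}=d\lambda$ and the $K$-equivariance of $\Lambda$ are used in an essential way, together with a check of smoothness across fibers. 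Having descended $\omega$, verifying the two connection axioms and $H$-invariance at every point reduces, by equivariance, to their validity at $\iota(e)$, which holds by construction. Once well-definedness is secured, confirming that the two assignments are mutually inverse is formal, which establishes the claimed $1$-$1$ correspondence.
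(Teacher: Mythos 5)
The paper does not prove this statement: it is quoted verbatim from Wang's original work \cite{Wang58} and used as a black box, so there is no in-paper argument to compare against. Your reconstruction is the standard proof of Wang's theorem and is correct: pulling back along $\iota\colon h\mapsto[h,e]$, using $\iota(h'h)=L_{h'}\iota(h)$ and $\iota(hk)=\iota(h)\cdot\lambda(k)$ to force left-invariance, the normalization $\alpha_e|_{\mathfrak{k}}=d\lambda$, and the intertwining relation $\alpha_e\circ\mathrm{Ad}(k)=\mathrm{Ad}(\lambda(k))\circ\alpha_e$, then restricting to the reductive complement $\mathfrak{m}$, is exactly the classical argument, and you correctly identify the descent/well-definedness check as the only nontrivial step in the converse direction (one should also note, as you implicitly do, that the image of $\iota$ together with the vertical directions spans $TP_\lambda$, so $\omega$ is indeed recovered from $\alpha$ and the connection axioms).
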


We see that any invariant connection will differ from the canonical invariant connection by a morphism $\Lambda$ and the horizontal space of such a connection is given by the kernel of this morphism. This allows us to parameterize all invariant connections on a bundle $P_\lambda$.

\subsection{$\R^4 \times S^3$ and its $\text{SU}(2)^2$-invariant structures}
\label{sec:Alo22summary}

In this text, we focus on the non-compact manifold $\R^4 \times S^3$, which admits a cohomogeneity one action with group diagram $\text{SU}(2) \times \text{SU}(2) \supset \Delta \text{SU}(2) \supset \{ 1 \}$.
In particular, we consider the embedding $\R^4 \times S^3 \hookrightarrow \mathbb{H} \times \mathbb{H}$ and let $\text{SU}(2) \times \text{SU}(2)$ act via
$$
(a_1 , a_2) \cdot (p,q) = (a_1 p, a_1 q \bar{a}_2).
$$
$\R^4 \times S^3$ is diffeomorphic to the total space of the spinor bundle $\mathcal{S} \rightarrow S^3$ over the 3-sphere, which can be described as the quotient 
$$
\dfrac{\text{SU}(2) \times \text{SU}(2) \times \R^4}{\Delta \text{SU}(2)},
$$
where $\text{SU}(2)$ is acting on the right diagonally.
This manifold was first studied by Bryant and Salamon in \cite{BS89}, where the authors used it to construct the first examples of complete $G_2$-manifolds.
Brandhuber et al.\ constructed another $G_2$-holonomy metric (also known as the BGGG metric) on this manifold in \cite{Brandhuber01}. In particular, it is a member of a family of complete $\left( \text{SU}(2)^2 \times \text{U}(1) \right)$-invariant $G_2$-holonomy metrics found by Bogoyavlenskaya in \cite{Bogoyavlenskaya13}.

In \cite{Alo22}, we searched for coclosed $\text{SU}(2)^2$-invariant $G_2$-structures constructed from half-flat $\text{SU}(3)$-structures on this manifold, that are not necessarily torsion-free. 
We found a large family of structures with these conditions on each of those manifolds. 

\begin{theorem}\cite[Theorem 4.9]{Alo22}
On the cohomogeneity one manifold $M=\R^4 \times S^3$ with group diagram $\text{SU}(2)^2 \supset \Delta \text{SU}(2) \supset \{ 1 \}$, there is a family of $\text{SU}(2)^2$-invariant coclosed $G_2$-structures which is given by three positive smooth functions $A_1, A_2, A_3: [0, \infty) \rightarrow \R$
satisfying the boundary conditions at $t=0$
$$
A_i(t)= \dfrac{t}{2} + O(t^3),
$$
and a non-zero parameter.
Moreover, any $\text{SU}(2)^2$-invariant coclosed $G_2$-structure constructed from a half flat $\text{SU}(3)$-structure is in this family.
\end{theorem}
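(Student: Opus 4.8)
The plan is to turn the single equation $d\psi=0$ into an ODE--algebraic system transverse to the orbits, solve that system, and then impose smooth extension across the singular orbit to pin down the boundary behaviour. On the open dense set $M\setminus S^3\cong(0,\infty)_t\times\text{SU}(2)^2$ the group acts simply transitively on each principal orbit, so an invariant $G_2$-structure is precisely a $t$-dependent left-invariant $\text{SU}(3)$-structure $(\omega,\psi_+,\psi_-)$ on $\text{SU}(2)^2$ written as $\varphi=dt\wedge\omega+\psi_+$. Then $\psi=*_\varphi\varphi=\tfrac12\omega\wedge\omega-dt\wedge\psi_-$, and decomposing the exterior derivative as $d=d_N+dt\wedge\partial_t$ along the product gives
\[
d\psi=d_N\!\left(\tfrac12\omega^2\right)+dt\wedge\left(\partial_t\!\left(\tfrac12\omega^2\right)+d_N\psi_-\right).
\]
Thus $d\psi=0$ is equivalent to the pointwise-in-$t$ algebraic condition $d_N(\omega^2)=0$ on the orbit together with the evolution equation $\partial_t(\tfrac12\omega^2)=-d_N\psi_-$. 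Since $d_N(\omega^2)=0$ is exactly one of the two half-flat equations, the natural choice of orbit data is a half-flat $\text{SU}(3)$-structure, which is what ``constructed from a half-flat $\text{SU}(3)$-structure'' means; this is the conceptual starting point.

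Next I would make the orbit data explicit. Fix the reductive splitting $\mathfrak{su}(2)^2=\Delta\mathfrak{su}(2)\oplus\mathfrak m$ with $\mathfrak m$ the anti-diagonal, and let $\{\eta_i\}$ (diagonal/base) and $\{\theta_i\}$ (anti-diagonal/fibre) be the dual left-invariant coframe, so that the Maurer--Cartan relations are $d\eta_k=-\tfrac12\epsilon_{ijk}(\eta_i\wedge\eta_j+\theta_i\wedge\theta_j)$ and $d\theta_k=-\epsilon_{ijk}\eta_i\wedge\theta_j$, reflecting $[\mathfrak m,\mathfrak m]\subset\Delta\mathfrak{su}(2)$. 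The first real task is to parametrise the invariant half-flat $\text{SU}(3)$-structures in this coframe. A short computation with the Maurer--Cartan relations shows that the naive product structure $\omega=\sum_i c_i\,\eta_i\wedge\theta_i$ admits no half-flat member, so the invariant almost complex structure must rotate base and fibre directions (as for the nearly-Kähler structure on $S^3\times S^3$); its free coefficients are functions of $t$. Feeding this ansatz into $d_N\psi_+=0$ and $d_N(\omega^2)=0$ and using $G_2$-definiteness (stability of $\varphi$) reduces the coefficients so that the structure is determined by the three fibre radii $A_1,A_2,A_3(t)$, which remain free, together with a single surviving constant (the non-zero parameter); the remaining evolution equation $\partial_t(\tfrac12\omega^2)=-d_N\psi_-$ is then solved by this family rather than over-constraining it. Checking positivity of the induced metric throughout confirms that these are genuine coclosed $G_2$-structures on $(0,\infty)\times\text{SU}(2)^2$.

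It remains to extend across the singular orbit $S^3=\text{SU}(2)^2/\Delta\text{SU}(2)$ and to prove exhaustiveness. For smoothness I would use the slice representation: near $t=0$ the normal space to the singular orbit is the fibre $\R^4$ on which the isotropy $\Delta\text{SU}(2)\cong\text{SU}(2)$ acts as the standard representation, and $(t,\theta_1,\theta_2,\theta_3)$ are polar-type coordinates on it. Requiring the invariant forms $\omega,\psi_\pm$ (hence $\varphi$ and $\psi$) to extend to smooth tensors on the disc bundle forces each structure function to have a definite parity in $t$ and a prescribed leading term; for the fibre radii this is precisely $A_i(t)=\tfrac t2+O(t^3)$, the $\tfrac12$ coming from the Maurer--Cartan normalisation of the collapsing $\theta_i$. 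Finally, since every $\text{SU}(2)^2$-invariant $\text{SU}(3)$-structure on the orbit is captured by the coframe parametrisation above, the same computation that produces the family also classifies it, yielding the ``moreover'' statement. I expect the main obstacle to be two-fold: correctly identifying and solving the reduced half-flat-plus-evolution system without any extra symmetry (where the rotated almost complex structure makes the Maurer--Cartan bookkeeping delicate), and carrying out the smooth-extension analysis at the collapsing orbit to obtain the stated boundary conditions and to ensure the family is non-empty.
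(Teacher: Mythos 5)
A preliminary remark: the statement you are proving is quoted from \cite[Theorem 4.9]{Alo22} and is not reproved in the present paper, so your outline has to be measured against the construction as the paper recounts it in Section \ref{sec:Alo22summary} and the facts it cites from \cite{Alo22}. Your first and last steps are sound and agree with that construction: splitting $d = d_N + dt\wedge\partial_t$ shows that $d\psi = 0$ is equivalent to the pointwise condition $\omega\wedge d_N\omega = 0$ together with the evolution equation $\partial_t\left(\tfrac{1}{2}\omega^2\right) = -d_N\Omega_2$ (this is exactly \cite[Lemma 1]{Lotay17}, quoted in Section \ref{sec:generaleqns}), and the boundary conditions $A_i(t) = \tfrac{t}{2} + O(t^3)$ do come from an Eschenburg--Wang/slice-representation analysis of smooth extension over the collapsing orbit.

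The genuine gap is in your middle step, where the roles of the two halves of the equation are reversed. You claim that the pointwise half-flat conditions $d_N\Omega_1 = 0$, $d_N(\omega^2) = 0$ already cut the invariant data down to the three functions $A_1, A_2, A_3$ plus ``a single surviving constant'', and that the evolution equation ``is then solved by this family rather than over-constraining it''. This cannot work: a condition imposed separately at each $t$ can only cut down the number of free \emph{functions}; it can never produce an integration \emph{constant}. What the normal form of Schulte-Hegensbach and Madsen--Salamon actually gives --- and what Section \ref{sec:Alo22summary} states explicitly --- is that a generic $\text{SU}(2)^2$-invariant half-flat structure is given by \emph{six} free functions $A_i, B_i$, with diagonal compatible metric $g_t = \sum_i (2A_i)^2\,\eta_i^+\otimes\eta_i^+ + \sum_i (2B_i)^2\,\eta_i^-\otimes\eta_i^-$. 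The real content of the theorem is that the evolution equation is \emph{not} automatic: it is a nontrivial first-order ODE system which, for arbitrarily prescribed $A_1, A_2, A_3$, determines the $B_i$ up to constants of integration, and smooth extension over the singular orbit reduces these to the single parameter $b_0 = B_i(0) \neq 0$. The paper exhibits precisely this mechanism: the Remark in Section \ref{sec:SU(2)^3coclosed} displays $B_1$ as the explicit solution of such an ODE with $b_0^2/16$ as its constant of integration, and \cite[Lemma 4.7]{Alo22} shows the Taylor coefficients of $B_1$ are determined by $b_0$ and the $a_{i,3}$. Had the evolution equation been automatic, the $B_i$ would remain free and the family would depend on six functions, contradicting the very statement you are trying to prove.

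Two further slips feed into this. The claim that ``the naive product structure $\omega = \sum_i c_i\,\eta_i\wedge\theta_i$ admits no half-flat member'' is false: the whole family has $\omega$ of exactly this diagonal form, with $c_i = 4A_iB_i$, as forced by the diagonal metric above (the almost complex structure pairs each $\eta_i^+$ with $\eta_i^-$); rejecting this ansatz is what pushes you toward the incorrect parameter count. You also have base and fibre interchanged: the singular orbit $S^3 = \text{SU}(2)^2/\Delta\text{SU}(2)$ is tangent to the \emph{anti-diagonal} directions, whose radii $B_i \to b_0$ stay bounded away from zero --- this is the geometric meaning of the non-zero parameter, not an algebraic leftover --- while it is the \emph{diagonal} directions, with radii $A_i$, that collapse and serve as the angular coordinates on the normal $\R^4$; that is why the condition $A_i = \tfrac{t}{2} + O(t^3)$ falls on the $A_i$ rather than on the coefficients of your $\theta_i$.
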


The parameter $b_0$ will be assumed to be positive through this paper for simplicity, as its sign does not have any significant effect.

We construct an $\text{SU}(2)^2$-invariant half-flat $\text{SU}(3)$-structure $(\omega, \Omega_1, \Omega_2)$ on the principal orbits of a cohomogeneity one manifold $M$. 
This follows from the work by Schulte-Hegensbach in his PhD thesis \cite{Schulte10} and Madsen and Salamon in \cite{MS13}.
Then, there is an associated $G_2$-structure on the principal part of the manifold, given by
\begin{equation}\label{eq:G2 structure}
\begin{array}{ll}
\varphi = \text{d}t \wedge \omega(t) + \Omega_1(t), \\
\psi= \dfrac{\omega^2(t)}{2} - \text{d}t \wedge \Omega_2 (t).
\end{array}
\end{equation}

To prove these results, we start by constructing a basis of $\mathfrak{su}(2) \oplus \mathfrak{su}(2)$ written as
$
\mathfrak{su}(2) \oplus \mathfrak{su}(2) = \mathfrak{su}^+(2) \oplus \mathfrak{su}^-(2)
$.
Let $\{ T_i \}_{i=1}^3$ be a basis for $\mathfrak{su}(2)$ such that $[T_i, T_j] = 2 \epsilon_{ijk} T_k$. Then
$$
T_i^+=(T_i, T_i), \quad T_i^-=(T_i, -T_i),
$$
define a basis for $\mathfrak{su}^+(2)$ and $\mathfrak{su}^-(2)$ respectively.
Let $\{ \eta_i^+ \}_{i=1}^3$ and $\{ \eta_i^- \}_{i=1}^3$ be dual basis to $\{ T_i^+ \}_{i=1}^3$ and $\{ T_i^- \}_{i=1}^3$ respectively. 
A generic $\text{SU}(2)^2$-invariant half-flat $\text{SU}(3)$-structure on the principal bundle is given by
six real-valued functions $A_i, B_i: I_t \rightarrow \R$, $i=1,2,3$, where $I_t= (0, \infty)$.
The compatible metric determined by this $\text{SU}(3)$-structure on $\{ t \} \times G/K$ is:
$$
g_t =
\sum_{i=1}^3 (2A_i)^2 \eta_i^+ \otimes \eta_i^+ + \sum_{i=1}^3 (2B_i)^2 \eta_i^- \otimes \eta_i^-,
$$
and the resulting metric on $I_t \times G/K$, compatible with the $G_2$-structure $\varphi = \text{d}t \wedge \omega + \Omega_1$, is given by 
\begin{equation}\label{eq:g}
    g = \text{d}t^2 + g_t.
\end{equation}
Hence, we can see the functions $A_i(t)$ and $B_i(t)$ as describing deformations of the standard cone metric.

\begin{ex}
One particular case of these structures are the torsion-free $G_2$-structures giving the complete Bryant--Salamon $G_2$-holonomy metric on $\R^4 \times S^3$ from \cite{BS89}. 
The Bryant--Salamon metric is actually $\text{SU}(2)^3$-invariant, and it can be realised as a cohomogeneity one manifold with group diagram $\text{SU}(2)^3 \supset \text{SU}(2)^2 \supset \text{SU}(2)$ (where $\text{SU}(2)$ is embedded in $\text{SU}(2)^3$ as $1 \times 1 \times \text{SU}(2)$ and $\text{SU}(2)^2$ as $\Delta_{1,2} \text{SU}(2) \times \text{SU}(2)$), as well as with an action of $\text{SU}(2)^2$ in multiple inequivalent ways. 
The extra symmetry means that $A_1 =A_2 =A_3$ and $B_1=B_2=B_3$, and in particular 
$$
A_1= \dfrac{r}{3} \sqrt{1-r^{-3}} , \qquad B_1 =\dfrac{r}{\sqrt{3}},
$$
where $r \in [1, + \infty)$ is a coordinate defined implicitly by 
$
t(r)= \int_1^r \dfrac{ds}{\sqrt{1-s^{-3}}},
$
and $t$ denotes the arc length parameter.
The metric will then be
$$
g=dt^2 + \sum_{i=1}^3  \left(  \dfrac{4 r^2}{9}(1 - r^{-3}) \eta_i^+ \otimes \eta_i^+ + \dfrac{4r^2}{3} \eta_i^- \otimes \eta_i^- \right).
$$
The asymptotic cone over of this metric is the standard homogeneous nearly K\"ahler structure on $S^3 \times S^3$.
\end{ex}

\section{\texorpdfstring{$G_2$}{G2}-instantons on cohomogeneity one manifolds}\label{sec:G2instantons}

\subsection{$G_2$-instanton equations}
\label{sec:generaleqns}

Let $M= I_t  \times N$ be a seven-dimensional manifold with a $G_2$-structure coming from a half-flat $\text{SU}(3)$-structure.
Let $P$ be a principal $G$-bundle on $M$, then $P$ is a pullback of a bundle on $N$. We will work in temporal gauge, so we can assume that a connection on $P$ is of the form $A=a(t)$, where $a(t)$ is a one-parameter family of connections 
on the bundle over $N$. The curvature of $A$ is given by
\begin{equation}\label{eq:FA}
F_A= dt \wedge \dot{a} + F_a(t),
\end{equation}
where $F_a(t)$ is the curvature of the connection $a(t)$. 
Hence, $A$ is a $G_2$-instanton if and only if the following equation for $a(t)$ is satisfied:
\begin{equation}\label{eq:G2instantons}
    \dot{a} \wedge \dfrac{1}{2} \omega^2 -F_a \wedge \Omega_2=0, \quad F_a \wedge \dfrac{1}{2} \omega^2=0.
\end{equation}

\begin{lemma}\cite[Lemma 1]{Lotay17}
Let $M = I_t  \times N$ be equipped with a $G_2$-structure $\varphi$ as in (\ref{eq:G2 structure}) satisfying $\omega \wedge d\omega=0$ and $\omega \wedge \dot{\omega}= -d \Omega_2$, which is equivalent to $d\psi = 0$. Then, $G_2$-instantons $A$ for $\varphi$ are in one-to-one correspondence with one-parameter families of connections $\{ a(t) \}_{t\in I_t}$ solving the equation
\begin{equation}
J_t \dot{a}= - *_t (F_a \wedge \Omega_2),    
\end{equation}
subject to the constraint $\Lambda_t F_a = 0$, where $\Lambda_t$ denotes the metric dual of the operation of wedging with $\omega(t)$. Moreover, this constraint is compatible with the evolution: more precisely, if it holds for some $t_0 \in I_t$, then it holds for all $t \in I_t$.
\end{lemma}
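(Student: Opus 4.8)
The plan is to start from the $G_2$-instanton condition $F_A \wedge \psi = 0$ and reduce it to an evolution equation plus a preserved constraint. Substituting $F_A = dt\wedge\dot a + F_a$ from \eqref{eq:FA} and $\psi = \tfrac12\omega^2 - dt\wedge\Omega_2$ from \eqref{eq:G2 structure}, I would expand the wedge product and collect terms according to whether they contain the factor $dt$. The $dt\wedge dt$ term drops out, and the remaining six-form splits as $dt\wedge\bigl(\dot a\wedge\tfrac12\omega^2 - F_a\wedge\Omega_2\bigr) + F_a\wedge\tfrac12\omega^2$. Since $N$ is six-dimensional, the $dt$-component and the purely spatial component lie in complementary summands of $\Omega^6(M)$, so both must vanish independently; this recovers exactly the pair \eqref{eq:G2instantons}. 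Thus the content of the lemma is to re-express these two equations intrinsically in terms of the $\mathrm{SU}(3)$-structure data $(\omega,\Omega_2,J_t,g_t)$ and then to verify that the constraint propagates.

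The second step is the purely algebraic translation, carried out pointwise on each slice $\{t\}\times N$ using the linear algebra of an $\mathrm{SU}(3)$-structure. For the constraint $F_a\wedge\tfrac12\omega^2 = 0$, I would invoke the standard Lefschetz identity on a six-dimensional symplectic vector space: for a two-form $\beta$ one has $\beta\wedge\tfrac12\omega^2 = (\Lambda_t\beta)\,\mathrm{vol}_t$, so the constraint is equivalent to primitivity, $\Lambda_t F_a = 0$. For the evolution equation, I would use the almost-Hermitian identity $\alpha\wedge\tfrac12\omega^2 = *_t(J_t\alpha)$ valid for one-forms $\alpha$, applied componentwise to the $\mathfrak g$-valued one-form $\dot a$. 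This turns $\dot a\wedge\tfrac12\omega^2 = F_a\wedge\Omega_2$ into $*_t(J_t\dot a) = F_a\wedge\Omega_2$; applying $*_t$ and using $*_t^2 = -1$ on one-forms in dimension six then yields $J_t\dot a = -\,{*}_t(F_a\wedge\Omega_2)$, as claimed.

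For the final assertion that the constraint is preserved, I would set $\Theta(t) := F_a(t)\wedge\tfrac12\omega^2(t)$ and show $\dot\Theta = 0$ along any solution of the evolution equation. Differentiating gives $\dot\Theta = \dot F_a\wedge\tfrac12\omega^2 + F_a\wedge\omega\wedge\dot\omega$, with $\dot F_a = d_a\dot a$. Using $d\bigl(\tfrac12\omega^2\bigr) = \omega\wedge d\omega = 0$ and the graded Leibniz rule, the first term equals $d_a\bigl(\dot a\wedge\tfrac12\omega^2\bigr) = d_a(F_a\wedge\Omega_2)$ by the evolution equation, which by the Bianchi identity $d_aF_a = 0$ reduces to $F_a\wedge d\Omega_2$. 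The second term, using the half-flat condition $\omega\wedge\dot\omega = -d\Omega_2$, equals $-F_a\wedge d\Omega_2$. The two cancel exactly, so $\Theta$ is constant in $t$; hence $\Lambda_{t_0}F_a = 0$ at one time forces $\Lambda_t F_a = 0$ for all $t$.

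The main obstacle is not conceptual but bookkeeping: pinning down the precise sign and orientation conventions so that the identities $\alpha\wedge\tfrac12\omega^2 = {*}_t(J_t\alpha)$ and $\beta\wedge\tfrac12\omega^2 = (\Lambda_t\beta)\,\mathrm{vol}_t$ hold with the exact normalizations used here, and confirming that the sign produced by $*_t^2 = -1$ is consistent with the definitions of $J_t$ and $\Omega_2$ coming from the half-flat $\mathrm{SU}(3)$-structure. Once these conventions are fixed, the conservation computation is an exact cancellation and requires no estimates; in particular no Gr\"onwall-type argument is needed, since $\Theta$ is literally constant rather than merely controlled.
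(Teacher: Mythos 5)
Your proposal is correct and follows essentially the same route as the source of this lemma: the paper itself gives no proof but cites \cite[Lemma 1]{Lotay17}, whose argument is exactly your three steps (split $F_A\wedge\psi$ into its $dt$ and purely spatial components to get \eqref{eq:G2instantons}, translate via the Lefschetz and almost-Hermitian identities, and show $\partial_t\bigl(F_a\wedge\tfrac12\omega^2\bigr)=0$ using $\dot F_a=d_a\dot a$, the Bianchi identity, $\omega\wedge d\omega=0$ and $\omega\wedge\dot\omega=-d\Omega_2$). The only point to settle is the sign convention in $\alpha\wedge\tfrac12\omega^2=\pm{*}_t(J_t\alpha)$, which you correctly flag and which is fixed by the orientation and $J_t$ conventions of the half-flat $\mathrm{SU}(3)$-structure.
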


The most general $\text{SU}(2)^2$-invariant connection on any $\text{SU}(2)^2$-homogeneous $G$-bundle 
$P_\lambda =\text{SU}(2)^2 \times_{(K,\lambda)} G$ over a homogeneous space $N=\text{SU}(2)^2/K$, where $\lambda: K \rightarrow G$ is a group homomorphism,
can be written as
\begin{equation}\label{eq:connection}
a=\sum_{i=1}^3 a_i^+ \otimes \eta_i^+ + a_i^- \otimes \eta_i^-,
\end{equation}
where $a_i^\pm \in \mathfrak{g}$ are constant on each principal orbit.

\begin{lemma}\cite[Lemma 2]{Lotay17}\label{lemma:2}
In the previous situation, the curvature of the connection $a(t)$ on $\{ t \} \times N$ is given by
\begin{equation}\label{eq:Fa}
\begin{array}{ll}
    F_a 
    &=\sum_{i=1}^3 [a_i^+, a_i^-] \otimes \eta_i^+ \wedge \eta_i^- \\
    &+\sum_{i=1}^3 ((-2a_i^+ + [a_j^+, a_k^+]) \otimes \eta_j^+ \wedge \eta_k^+ +(-2a_i^+ + [a_j^-, a_k^-]) \otimes \eta_j^- \wedge \eta_k^-)\\
    &+\sum_{i=1}^3 ((-2a_i^- + [a_j^-, a_k^+]) \otimes \eta_j^- \wedge \eta_k^+ +(-2a_i^- + [a_j^+, a_k^-]) \otimes \eta_j^+ \wedge \eta_k^-),
\end{array}
\end{equation}
where in the summation above $\{ j,k \}$ is such that $\{ i,j,k \}$ is a cyclic permutation of $\{ 1,2,3 \}$.
\end{lemma}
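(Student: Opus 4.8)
The plan is to compute $F_a$ directly from the structure equation $F_a = da + \tfrac12[a\wedge a]$, exploiting the fact that the coefficients $a_i^\pm \in \mathfrak{g}$ are constant on each principal orbit. Because the $a_i^\pm$ are constant, $da = \sum_i (a_i^+ \otimes d\eta_i^+ + a_i^- \otimes d\eta_i^-)$, so the first task is to record the Maurer--Cartan structure equations for the left-invariant coframe $\{\eta_i^\pm\}$. The algebraic term $\tfrac12[a\wedge a] = \tfrac12\sum_{\alpha,\beta}[a_\alpha,a_\beta]\,\eta^\alpha\wedge\eta^\beta$ (summing over all indices $\alpha,\beta$ ranging over the signs $\pm$ and over $1,2,3$) will supply every bracket appearing in the claimed formula, while $da$ will supply every $-2a_i^\pm$ term.

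First I would derive the structure equations. From $[T_i,T_j]=2\epsilon_{ijk}T_k$ and the definitions $T_i^+=(T_i,T_i)$, $T_i^-=(T_i,-T_i)$, a short computation gives
\[
[T_i^+,T_j^+] = 2\epsilon_{ijk}T_k^+, \qquad [T_i^-,T_j^-] = 2\epsilon_{ijk}T_k^+, \qquad [T_i^+,T_j^-] = 2\epsilon_{ijk}T_k^-.
\]
Dualizing via $d\eta^c = -\tfrac12 c^c_{ab}\,\eta^a\wedge\eta^b$ (valid for left-invariant forms, where $[T_a,T_b]=c^c_{ab}T_c$) then yields, for $(i,j,k)$ a cyclic permutation of $(1,2,3)$,
\[
d\eta_k^+ = -2(\eta_i^+\wedge\eta_j^+ + \eta_i^-\wedge\eta_j^-), \qquad d\eta_k^- = -2(\eta_i^+\wedge\eta_j^- + \eta_i^-\wedge\eta_j^+).
\]

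Next I would substitute these into $da$ and relabel by a cyclic shift of the summation index so that the coefficient of each basis $2$-form is displayed against $a_i^\pm$ rather than $a_k^\pm$. This sends $\sum_k a_k^+\otimes d\eta_k^+$ to $\sum_i(-2a_i^+)\otimes(\eta_j^+\wedge\eta_k^+ + \eta_j^-\wedge\eta_k^-)$ and $\sum_k a_k^-\otimes d\eta_k^-$ to $\sum_i(-2a_i^-)\otimes(\eta_j^+\wedge\eta_k^- + \eta_j^-\wedge\eta_k^+)$, which are precisely the $-2a_i^\pm$ contributions in the statement. Then, expanding $\tfrac12[a\wedge a]$, each ordered pair and its transpose combine using antisymmetry of both the wedge and the Lie bracket (the doubling cancelling the factor $\tfrac12$), producing $[a_i^+,a_i^-]\otimes\eta_i^+\wedge\eta_i^-$, $[a_j^+,a_k^+]\otimes\eta_j^+\wedge\eta_k^+$, and so on, exactly one bracket per basis $2$-form. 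Adding the two pieces term by term gives the stated formula.

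The only real work is bookkeeping: the principal orbit carries six $1$-forms, and the resulting $2$-forms organize into the five types $\eta_i^+\wedge\eta_i^-$, $\eta_j^+\wedge\eta_k^+$, $\eta_j^-\wedge\eta_k^-$, $\eta_j^-\wedge\eta_k^+$, $\eta_j^+\wedge\eta_k^-$ appearing in the conclusion. The delicate points are (i) fixing the signs in the structure equations, in particular the mixed equation for $d\eta_k^-$, where the two cross terms $\eta_i^+\wedge\eta_j^-$ and $\eta_i^-\wedge\eta_j^+$ must be seen to add rather than cancel after invoking the antisymmetry of $\epsilon_{ijk}$; and (ii) performing the cyclic relabeling consistently so that the $da$ terms land on the same basis $2$-forms as the corresponding brackets coming from $[a\wedge a]$. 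Once the coframe conventions are fixed these steps are routine, and no analytic input is needed---this is a purely algebraic identity.
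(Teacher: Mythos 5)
Your proof is correct and is essentially the argument behind the cited result: the paper itself gives no proof of this lemma (it quotes \cite[Lemma 2]{Lotay17}), and the proof there is exactly this direct computation of $F_a = da + \tfrac12[a\wedge a]$ using the Maurer--Cartan structure equations for the coframe $\{\eta_i^\pm\}$ dual to $T_i^\pm$. Your structure equations and sign bookkeeping check out against the stated formula, so there is nothing to add.
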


\begin{lemma}\cite[Lemma 3]{Lotay17}\label{lemma:3}
Let $\{ i,j,k \}$ be a cyclic permutation of $\{ 1,2,3 \}$.
The equations (\ref{eq:G2instantons}) for $\text{SU}(2)^2$-invariant instantons $a$ on $\R_t^+ \times N$ are
\begin{equation}\label{eq:instantons}
\begin{array}{ll}
\dfrac{B_i}{A_i} \dot{a}_i^+ +\left( \dfrac{B_i}{B_j B_k} - \dfrac{B_i}{A_j A_k} \right) a_i^+ 
= \dfrac{B_i}{2 B_j B_k}[a_j^-, a_k^-] -\dfrac{B_i}{2 A_j A_k}[a_j^+, a_k^+] , \\
\dfrac{A_i}{B_i} \dot{a}_i^- +\left( \dfrac{A_i}{B_j A_k} + \dfrac{A_i}{A_j B_k} \right) a_i^- 
= \dfrac{A_i}{2 B_j A_k}[a_j^-, a_k^+] +\dfrac{A_i}{2 A_j B_k}[a_j^+, a_k^-] ,
\end{array}
\end{equation}
together with the constraint 
\begin{equation}\label{eq:instconstraint}
\sum_{i=1}^3 \dfrac{1}{A_i B_i} [a_i^+, a_i^-]=0.
\end{equation}
\end{lemma}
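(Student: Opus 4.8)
The plan is to prove the statement by direct substitution and coefficient-matching: I would write out the half-flat $\text{SU}(3)$-structure $(\omega, \Omega_1, \Omega_2)$ of \cite{Alo22} explicitly in the coframe $\{\eta_i^\pm\}$, insert it together with the curvature formula (\ref{eq:Fa}) of Lemma \ref{lemma:2} into the two components of (\ref{eq:G2instantons}), and then read off coefficients against a basis of invariant $5$- and $6$-forms on $N=\text{SU}(2)^2/K$. The compatible metric $g_t = \sum_i (2A_i)^2\,\eta_i^+\otimes\eta_i^+ + (2B_i)^2\,\eta_i^-\otimes\eta_i^-$ forces, up to orientation, $\omega = \sum_i 4A_iB_i\,\eta_i^+\wedge\eta_i^-$, and the associated complex volume form yields $\Omega_2$ as a combination of the monomials $\eta_1^+\wedge\eta_2^+\wedge\eta_3^+$ and $\eta_i^+\wedge\eta_j^-\wedge\eta_k^-$ with coefficients built from the $A_i,B_i$. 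Writing $\omega_i := \eta_i^+\wedge\eta_i^-$, one gets $\tfrac12\omega^2 = \sum_{i<j} 16\,A_iB_iA_jB_j\,\omega_i\wedge\omega_j$.

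For the constraint $F_a\wedge\tfrac12\omega^2=0$, I would note that wedging a single summand $\omega_l\wedge\omega_m$ with a $2$-form produces the volume form $\omega_1\wedge\omega_2\wedge\omega_3$ only when that $2$-form is proportional to $\omega_i$ with $\{i,l,m\}=\{1,2,3\}$; every mixed monomial $\eta_j^\pm\wedge\eta_k^\pm$ appearing in (\ref{eq:Fa}) repeats a $+$ or $-$ index after wedging and hence dies. Thus only the diagonal terms $[a_i^+,a_i^-]\,\omega_i$ of $F_a$ survive, and collecting the coefficient of the volume form gives $\sum_i A_jB_jA_kB_k\,[a_i^+,a_i^-]=0$; dividing by $A_1B_1A_2B_2A_3B_3$ produces exactly (\ref{eq:instconstraint}).

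For the evolution equation $\dot a\wedge\tfrac12\omega^2 = F_a\wedge\Omega_2$, both sides are $\mathfrak{g}$-valued $5$-forms, so I would expand against the dual basis $\sigma_i^\pm := *_t\eta_i^\pm$, the volume form with $\eta_i^\pm$ deleted. Since $J_t\eta_i^\pm\propto\eta_i^\mp$, one has $\eta_i^\pm\wedge\tfrac12\omega^2\propto\sigma_i^\mp$, so $\dot a_i^+$ occupies the $\sigma_i^-$ slot and $\dot a_i^-$ the $\sigma_i^+$ slot. The decoupling of the two families in (\ref{eq:instantons}) is then a parity statement: every monomial of $\Omega_2$ has an even number of $-$ indices, so only the even part of $F_a$ — the terms $\eta_j^+\wedge\eta_k^+$ and $\eta_j^-\wedge\eta_k^-$, carrying $a_i^+,[a_j^+,a_k^+],[a_j^-,a_k^-]$ — can complete a $\sigma_i^-$, while the odd terms $\eta_j^-\wedge\eta_k^+$ and $\eta_j^+\wedge\eta_k^-$, carrying $a_i^-,[a_j^-,a_k^+],[a_j^+,a_k^-]$, complete a $\sigma_i^+$. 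Matching the coefficient of $\sigma_i^-$ then yields the first line of (\ref{eq:instantons}) and matching $\sigma_i^+$ the second.

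The main obstacle is purely computational bookkeeping rather than anything conceptual. I would need to fix a consistent orientation of $N$, track the reordering signs produced when each $3$-monomial of $\Omega_2$ is wedged with a $2$-monomial of $F_a$ and reduced to $\pm\sigma_i^\pm$, and carry the metric normalisations through so that the raw $5$-form identity, after dividing by the common factor $A_jB_jA_kB_k$, collapses to the normalised coefficients $B_i/A_i$, $B_i/(B_jB_k)$, $B_i/(A_jA_k)$ and their $\pm$-counterparts appearing in (\ref{eq:instantons}); these are exactly the factors encoded by $J_t$ and $*_t$ in the equivalent formulation $J_t\dot a=-*_t(F_a\wedge\Omega_2)$. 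The one genuine subtlety is the overall sign, which is governed by the $-\mathrm{d}t\wedge\Omega_2$ convention in (\ref{eq:G2 structure}) and must be fixed once and checked to be uniform across all six slots; the parity argument guarantees that no cross-terms between the $+$ and $-$ families can spoil this.
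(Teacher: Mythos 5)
The paper offers no proof of this lemma itself (it is quoted directly from \cite[Lemma 3]{Lotay17}), and your plan is essentially the computation carried out in that reference: substitute the explicit invariant $\text{SU}(3)$-structure and the curvature formula of Lemma \ref{lemma:2} into (\ref{eq:G2instantons}) and match coefficients against the invariant $5$- and $6$-forms on the principal orbit. Your two structural observations --- that only the diagonal monomials $\eta_i^+\wedge\eta_i^-$ of $F_a$ survive wedging with $\tfrac12\omega^2$, which yields the constraint (\ref{eq:instconstraint}), and that the even parity of $-$ indices in the monomials of $\Omega_2$ decouples the $\sigma_i^-$ and $\sigma_i^+$ slots, which splits (\ref{eq:instantons}) into its two families with the linear terms $-2a_i^\pm$ and the brackets travelling together in the same slot of $F_a$ --- are exactly right, and what remains is the sign and normalisation bookkeeping you already identify as routine.
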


\subsection{Extension to a singular orbit}\label{sec:extension}

In this section, we present the conditions for the extension of a $\mathfrak{g}$-valued 1-form to the singular orbit $\text{SU}(2)^2 / \Delta \text{SU}(2) \cong S^3$ in $\R^4 \times S^3$.
This conditions are obtained using the Eschenburg-Wang method \cite{EW00}, which for this particular situation was worked out in \cite[Appendix A]{Lotay17}.

We first consider the case when $G=\text{U}(1)$.

\begin{lemma}\cite[Lemma 9]{Lotay17}\label{lemma:9}
The 1-form $b$
$$
b=\sum_{i=1}^3 b_i^+ \otimes \eta_i^+ + \sum_{i=1}^3 b_i^- \otimes \eta_i^-,
$$
extends over the singular orbit $\text{SU}(2)^2 / \Delta \text{SU}(2)$ if and only if the $b_i^\pm$'s are even and $b_i^\pm (0)=0$ for $i = 1, 2, 3$.
\end{lemma}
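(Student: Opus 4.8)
The plan is to prove this by the Eschenburg--Wang method, reducing smooth extension over the singular orbit to a representation-theoretic statement on the slice. A tubular neighbourhood of the singular orbit $Q = \text{SU}(2)^2/\Delta\text{SU}(2) \cong S^3$ is equivariantly diffeomorphic to $\text{SU}(2)^2 \times_K V$, where $K = \Delta\text{SU}(2) \cong \text{SU}(2)$ is the isotropy group and $V \cong \mathbb{H} \cong \R^4$ is the slice. From the action $(a_1,a_2)\cdot(p,q) = (a_1 p, a_1 q \bar a_2)$ the singular orbit sits at $p=0$, and $K$ acts on the normal slice $V=\{p\}\cong\mathbb{H}$ by left multiplication, i.e.\ as the standard quaternionic (spin-$\tfrac12$) representation. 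Since the principal isotropy is trivial, $K$ acts simply transitively on the unit sphere $S(V)\cong S^3$, so I can take $t=|p|$ as the radial coordinate and identify $S(V)$ with $K$; the forms $\eta_i^+$, dual to $\mathfrak{su}^+(2)=\mathfrak{k}$, are then the angular forms of the collapsing sphere (consistent with $A_i(t)=t/2+O(t^3)$), while $\eta_i^-$, dual to $\mathfrak{su}^-(2)=\mathfrak{m}$, are tangent to $Q$.

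First I would set up the reduction: an $\text{SU}(2)^2$-invariant $\mathfrak{g}$-valued $1$-form on the tube is the same datum as a $K$-equivariant tensor-valued map on $V$, and smoothness across the singular orbit is equivalent to smoothness of this map at $0\in V$. For $G=\text{U}(1)$ the Lie algebra $\mathfrak{g}=i\R$ is a trivial $K$-module, so there is no twist by $\lambda$ and the only representations that matter are those housing the coframe. Both $\mathfrak{k}^*=(\mathfrak{su}^+(2))^*$ and $\mathfrak{m}^*=(\mathfrak{su}^-(2))^*$ are the adjoint (spin-$1$) representation of $K$. Thus $b_i^+$ and $b_i^-$ are the radial profiles of two $K$-equivariant maps $V\to(\text{spin }1)$.

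The heart of the argument is then an invariance-and-parity count. For the vanishing at $t=0$: the value of an invariant $1$-form at a point of $Q$ is $K$-fixed, but $(\text{spin }1)^{K}=0$, so the tangential and angular coefficients are forced to vanish on the orbit, giving $b_i^\pm(0)=0$. For evenness: expanding a smooth equivariant map $V\to W$ in the $\text{Sym}^k(V^*)$, I would use that $V_{\C}\cong 2\cdot(\text{spin }\tfrac12)$, so odd symmetric powers contain only half-integer spins while even powers contain only integer spins; hence the summand $\text{spin }1$ first appears at degree $2$ (indeed $\text{Sym}^2(\text{spin }\tfrac12)=(\text{spin }1)\oplus(\text{spin }0)$) and thereafter only in even degrees. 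Since $\text{SU}(2)^2$-invariant functions are functions of $|p|^2=t^2$, each profile is $t^2$ times a function of $t^2$, i.e.\ even with $b_i^\pm(0)=0$. This is necessity.

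For sufficiency I would exhibit explicit smooth generators and reconstruct $b$. Writing $p=tu$ with $|u|=1$ gives $\bar p\,dp = t\,dt + t^2\,\bar u\,du$, so each $t^2\eta_i^+$ is a component of $\text{Im}(\bar p\,dp)$ and hence smooth on $V$, and the $t^2\eta_i^-$ likewise arise from smooth quadratic expressions pulled back from the orbit directions; any even $b_i^\pm$ with $b_i^\pm(0)=0$ equals $t^2$ times a smooth function of $t^2$, making $b_i^\pm\eta_i^\pm$ manifestly smooth. I expect the main obstacle to be the third step together with pinning down the slice representation: one must verify precisely how $\{\eta_i^+,\eta_i^-\}$ embed in the adjoint representation of $K$ and translate ``smooth at $0\in V$'' into the exact parity and vanishing orders, which is the substance of the Eschenburg--Wang computation carried out in \cite[Appendix A]{Lotay17} and \cite{EW00}.
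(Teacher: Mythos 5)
The paper does not prove this lemma at all: it is quoted verbatim from \cite[Lemma 9]{Lotay17}, and the text only remarks that the conditions come from the Eschenburg--Wang method as worked out in \cite[Appendix A]{Lotay17}. Your proposal reconstructs exactly that argument (slice theorem, equivariant Taylor expansion on the slice $V\cong\mathbb{H}$ with $K=\Delta\mathrm{SU}(2)$ acting by left multiplication), so in approach it matches the source the paper relies on, and the conclusion you reach is the correct one.

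There is, however, one identification you should fix in the necessity direction. You treat both $\sum b_i^+\eta_i^+$ and $\sum b_i^-\eta_i^-$ as radial profiles of equivariant maps $V\to(\text{spin }1)$, on the grounds that $\mathfrak{k}^*$ and $\mathfrak{m}^*$ are both adjoint representations. That is right for the $\eta_i^-$ part, whose value at a point of $Q$ genuinely lives in $\mathfrak{m}^*=T^*Q$. But the $\eta_i^+$ directions are the collapsing sphere directions \emph{inside the slice}: the relevant fibre of $T^*M$ along $Q$ is $V^*\oplus\mathfrak{m}^*$, not $\mathfrak{k}^*\oplus\mathfrak{m}^*$, because the isomorphism $\mathfrak{k}\to T_vS_{|v|}(V)$ given by the infinitesimal action degenerates at $v=0$. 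The correct bookkeeping for the $+$ part is the one you already use for sufficiency: $t^2\eta_i^+$ are the components of $\mathrm{Im}(dp\,\bar p)$, so $\sum b_i^+\eta_i^+$ corresponds to an equivariant map $V\to V^*\cong 2\cdot(\text{spin }\tfrac12)$ with coefficient functions $b_i^+/t^2$, and spin-$\tfrac12$ occurring only in odd symmetric powers gives $b_i^+=t^2\cdot(\text{even})$. In the abelian case both bookkeepings happen to yield the same answer (even, $O(t^2)$), so your statement of Lemma \ref{lemma:9} is unaffected; but the distinction is not cosmetic: for Lemma \ref{lemma:10} on $P_{\mathrm{id}}$ the target $\mathfrak{k}^*\otimes\mathfrak{su}(2)\cong V_1\otimes V_1$ contains a trivial summand and would wrongly permit $b_{ii}^+(0)\neq 0$, whereas $V^*\otimes\mathfrak{su}(2)$ does not. (A second, minor slip: $\mathrm{Sym}^2$ of a single complex spin-$\tfrac12$ is spin $1$ alone; the spin-$0$ piece you mention sits in $\Lambda^2$, though spin $1$ does indeed first occur in degree $2$ of the realified slice representation, which is all you need.)
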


For most of our analysis, we will take $G= \text{SU}(2)$. The conditions for the extension to the singular orbit will now depend on the choice of bundle. 
Over the principal orbits $H/K_0 \cong \text{SU}(2)^2/ \{ 0 \}$, the only $\text{SU}(2)$-bundle is the trivial one $P= \text{SU}(2)^2 \times \text{SU}(2)$.
The singular orbits in the manifold considered is $\text{SU}(2)^2/ \Delta \text{SU}(2)$.
Up to an isomorphism of homogeneous bundles, there are only two possible homomorphisms $\lambda: \text{SU}(2) \rightarrow \text{SU}(2)$, the trivial one and the identity. Hence, there are two choices of bundle:
$$
\begin{array}{ll}
P_1=\text{SU}(2)^2 \times_{(\Delta \text{SU}(2), 1)} \text{SU}(2), \quad
P_\text{id}=\text{SU}(2)^2 \times_{(\Delta \text{SU}(2), \text{id})} \text{SU}(2).
\end{array}
$$
Therefore, we have two possible bundles for $\R^4 \times S^3$, that we will also denote $P_1$ and $P_\text{id}$. 

\begin{lemma}\cite[Lemma 10]{Lotay17}\label{lemma:10}
Let $b$ be an $\mathfrak{su}(2)$-valued 1-form 
$$
b=\sum_{i=1}^3 b_i^+ \otimes \eta_i^+ + \sum_{i=1}^3 b_i^- \otimes \eta_i^-.
$$
Write $b_i^\pm = \sum_{j=1}^3 b_{ij}^\pm T_j$, where $\{ T_i \}_{i=1}^3$ is the standard basis for $\mathfrak{su}(2)$. Then the 1-form $b$ extends over the singular orbit $Q=\text{SU}(2)^2 / \Delta \text{SU}(2)$ if:
\begin{enumerate}[label=(\roman*)]
    \item On the bundle $P_{\text{id}}$: for $i=1,2,3$, $b_{ii}^\pm$'s are even and there are $c_0^-, c_2^\pm \in \R$ such that 
    $$
    b_{ii}^+ = c_2^+ t^2 + O(t^4), \quad b_{ii}^- = c_0^- +c_2^- t^2 + O(t^4)
    $$ 
    and for $i \neq j$, $b_{ij}^\pm = O(t^4)$ are even.
    \item On the bundle $P_1$: $b_{ij}^\pm$'s are even with $b_{ij}^\pm (0)=0$.
\end{enumerate}
\end{lemma}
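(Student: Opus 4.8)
The statement is a smooth-extension criterion at the singular orbit, so the plan is to apply the Eschenburg--Wang method as set up in \cite[Appendix A]{Lotay17}. First I would realise a tubular neighbourhood of the singular orbit $Q=\text{SU}(2)^2/\Delta\text{SU}(2)$ as the associated disc bundle $\text{SU}(2)^2\times_{\Delta\text{SU}(2)}V$, where $V\cong\mathbb{H}$ is the normal slice and $K=\Delta\text{SU}(2)$ acts on it by the slice representation. A direct computation from the action $(a_1,a_2)\cdot(p,q)=(a_1p,a_1q\bar a_2)$ shows this slice representation is left quaternion multiplication, so $K\cong\text{SU}(2)$ acts simply transitively on the unit normal sphere $S^3\subset V$. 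Under the slice theorem, an invariant $\mathfrak g$-valued $1$-form on the tube corresponds to a smooth $K$-equivariant map $\beta\colon V\to(\mathfrak m^*\oplus V^*)\otimes\mathfrak g$, where $K$ acts on $\mathfrak m^*\cong\mathfrak{su}(2)$ by the adjoint representation, on $V^*$ by the slice representation, and on $\mathfrak g=\mathfrak{su}(2)$ through $\mathrm{Ad}\circ\lambda$; this is the trivial action for $P_1$ and the adjoint action for $P_{\text{id}}$. The whole problem is thereby reduced to describing such equivariant maps order by order.

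Second, I would fix the correct smooth coframe near $Q$. Writing $p=tg$ with $t=|p|$ and $g\in S^3\cong K$, the Maurer--Cartan forms of the normal sphere pull back to the $\eta_i^+$ (the duals of the isotropy directions $\mathfrak k$), and one checks that $\{\,dt,\ t\eta_1^+,\ t\eta_2^+,\ t\eta_3^+\,\}$ together with $\{\eta_1^-,\eta_2^-,\eta_3^-\}$ (pulled back from $Q$) is a smooth coframe, while the $\eta_i^+$ alone are singular at $t=0$. This is the source of the extra power of $t$ in the $\eta^+$-components: a $V^*$-valued coefficient of size $t^n$ contributes $t^{n+1}\eta_i^+$. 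The evenness statements then follow cleanly from the involution $\iota\colon(t,g)\mapsto(-t,(-1)\cdot g)$ that identifies the two parametrisations of $V\setminus\{0\}$: since $\iota^*\eta_i^\pm=\eta_i^\pm$, $\iota^*dt=-dt$, and the central element $-1\in K$ acts trivially on $\mathfrak g$ through $\mathrm{Ad}\circ\lambda$ for both bundles, $\iota$-invariance of $b$ forces every coefficient $b_{ij}^\pm(t)$ to be even.

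Third comes the representation theory that pins down the leading orders. Writing $W_m$ for the $(m+1)$-dimensional irreducible $\text{SU}(2)$-representation, the slice decomposes as $V\otimes\mathbb{C}\cong 2W_1$, so $\mathrm{Sym}^n(V^*)$ contains $W_m$ only when $n\equiv m\pmod 2$, and the action of $-1\in K$ matches the admissible parity of $n$ with the parity class of the target. A homogeneous equivariant map of degree $n$ contributes at order $t^n$ to the $\eta^-$-components and at order $t^{n+1}$ to the $\eta^+$-components. I would then read off, for each bundle, the minimal degree for which $(\mathrm{Sym}^n(V^*)\otimes U)^K\neq 0$ on each isotypic piece of $U=(\mathfrak m^*\oplus V^*)\otimes\mathfrak g$. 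For $P_1$, where $\mathfrak g$ is trivial, the relevant targets are $\mathfrak m^*\cong W_2$ (first appearing at $n=2$) and $V^*\cong 2W_1$ (first appearing at $n=1$), giving $b_{ij}^-=O(t^2)$ and $b_{ij}^+=O(t^2)$, both even and vanishing at $t=0$, which is exactly (ii). For $P_{\text{id}}$ the $\eta^-$-target is $\mathfrak m^*\otimes\mathfrak g\cong W_2\otimes W_2=W_0\oplus W_2\oplus W_4$: the invariant summand $W_0$ is the unique element of $U^K$ and produces the common constant $c_0^-\sum_i\eta_i^-\otimes T_i$ on the diagonal, while the degree-$2$ maps feed the $\eta^-$ diagonal coefficient $c_2^-$ (common to all $i$, since at order $t^2$ the off-diagonal piece is antisymmetric and the diagonal is pure trace); an analogous count on the $\eta^+$-target $V^*\otimes\mathfrak g$ produces the common diagonal term $c_2^+t^2$. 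Taking the off-diagonal blocks to vanish to order $t^4$ then yields (i).

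I expect the genuine obstacle to be bookkeeping rather than conceptual: translating the abstract isotypic decomposition of $U$ into concrete statements about the $3\times 3$ matrices $(b_{ij}^\pm)$ — in particular identifying the trace, antisymmetric, and symmetric-traceless parts with $W_0$, $W_2$, $W_4$, and keeping track of the one extra power of $t$ carried by the singular forms $\eta_i^+$. One should also note that the Lemma asserts only sufficiency (``extends \dots\ if''): the sharp necessary-and-sufficient conditions would permit an antisymmetric $O(t^2)$ term in the off-diagonal $\eta^-$-block coming from the $W_2\subset W_2\otimes W_2$ summand, and the stated hypotheses simply restrict to the natural subfamily in which those terms are switched off, which is all that is needed for the constructions that follow.
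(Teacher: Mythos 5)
The paper contains no proof of this lemma: it is quoted verbatim from \cite[Lemma 10]{Lotay17}, whose derivation via the Eschenburg--Wang method in \cite[Appendix A]{Lotay17} is precisely the argument you reconstruct (slice representation of $\Delta\mathrm{SU}(2)$ by left quaternion multiplication, the smooth coframe $\{dt,\,t\eta_i^+\}$ versus the pulled-back $\eta_i^-$, evenness from the antipodal involution, and the order-by-order count of $K$-equivariant maps $\mathrm{Sym}^n V\to(\mathfrak{m}^*\oplus V^*)\otimes\mathfrak{g}$). Your bookkeeping is correct, and your closing observation that the stated conditions are only sufficient --- the sharp criterion would additionally admit antisymmetric $O(t^2)$ off-diagonal terms on $P_{\text{id}}$ (in the $\eta^+$-block as well as the $\eta^-$-block) --- is consistent with the lemma's ``if'' phrasing.
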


\subsection{Abelian instantons}\label{sec:abelian}

Suppose the Lie algebra structure of the gauge group is trivial. Then equations (\ref{eq:instantons}) reduce to
\begin{equation}\label{eq:abelianinstantons}
\begin{array}{ll}
\dot{a}_i^+ +\left( \dfrac{A_i}{B_j B_k} - \dfrac{A_i}{A_j A_k} \right) a_i^+ 
= 0 \\
\dot{a}_i^- +\left( \dfrac{B_i}{B_j A_k} - \dfrac{B_i}{A_j B_k} \right) a_i^- 
= 0,
\end{array}
\end{equation}
and we have the following Proposition, which is similar to \cite[Proposition 4]{Lotay17} but for $\R^4 \times S^3$ with a coclosed $G_2$-structure constructed from a half-flat $\text{SU}(3)$-structure. 

\begin{prop}\label{prop:4}
An $\text{SU}(2)^2$-invariant $G_2$-instanton on a $\text{U}(1)$-bundle, or equivalently a complex line bundle on $\R^4 \times S^3$ with a $\text{SU}(2)^2$-invariant coclosed $G_2$-structure as in \cite[Theorem 4.9]{Alo22}, lies in a 3-parameter family. In particular, it can be written as 
\begin{equation}\label{eq:abinsexplicit}
\theta= \sum_{i=1}^3 a_i^+(t_0)t_0^{-2} \exp \left( - \int_{t_0}^t \left( \dfrac{A_i}{B_j B_k} - \dfrac{A_i}{A_j A_k} \right) ds \right) \eta_i^+,
\end{equation}
for some fixed $t_0 \in \R^+$ and $a_i^+(t_0) \in \R$ for $i=1,2,3$ where $\{ i,j,k \}$ is a cyclic permutation of $\{ 1,2,3 \}$.
\end{prop}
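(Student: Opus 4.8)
The plan is to integrate the decoupled abelian system (\ref{eq:abelianinstantons}) explicitly and then select, via the smoothness criterion of Lemma \ref{lemma:9}, those solutions that extend across the singular orbit. Because the gauge Lie algebra is abelian, every bracket in (\ref{eq:instantons}) vanishes: the constraint (\ref{eq:instconstraint}) holds automatically, and the six components $a_i^\pm$ (now real-valued, since $\mathfrak{u}(1)\cong\R$) obey the uncoupled linear ODEs (\ref{eq:abelianinstantons}). Each has the form $\dot a_i^\pm + P_i^\pm(t)\, a_i^\pm = 0$ and is solved by its integrating factor, $a_i^\pm(t) = a_i^\pm(t_0)\,\exp\!\big(-\!\int_{t_0}^t P_i^\pm\,ds\big)$ for a fixed $t_0 \in \R^+$. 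It then remains only to decide which of these extend smoothly over the singular orbit $\mathrm{SU}(2)^2/\Delta\mathrm{SU}(2)$.

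First I would show that the $\eta_i^-$ components must vanish. The relevant input is the boundary behaviour at $t=0$: one has $A_i(t)=\tfrac t2+O(t^3)$ (odd) and $B_i(0)=b_0>0$ (even), the parities being forced by smoothness of the underlying $G_2$-structure. Substituting these into $P_i^- = \frac{B_i}{B_jA_k}-\frac{B_i}{A_jB_k}$, each summand carries one factor $A_k$ or $A_j \sim t/2$ in its denominator, so $P_i^-$ has at worst a simple pole $c_i/t$ at the origin. Consequently its integrating factor approaches either a finite nonzero limit (when the poles cancel, i.e.\ $c_i=0$) or a genuine power of $t$; in neither case is the resulting $a_i^-$ compatible with the requirement of Lemma \ref{lemma:9} that it be even with $a_i^-(0)=0$, unless $a_i^-(t_0)=0$. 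Hence $a_i^-\equiv 0$ for all $i$.

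By contrast, the $\eta_i^+$ components survive. Here $P_i^+ = \frac{A_i}{B_jB_k}-\frac{A_i}{A_jA_k}$ is odd (odd numerator over even denominators), and its only pole comes from $-\frac{A_i}{A_jA_k}\sim-\frac2t$, so $P_i^+=-\frac2t+O(t)$. Its integrating factor is therefore even and vanishes to exactly second order, $a_i^+(t)\sim c\cdot t^2$, which meets both conditions of Lemma \ref{lemma:9}. Thus each $a_i^+$ is a genuine, nontrivial solution determined by the single real parameter $a_i^+(t_0)$, and writing out the integrating factor recovers the explicit formula (\ref{eq:abinsexplicit}) (with $t_0^{-2}$ a fixed normalization). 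Since the three values $a_1^+(t_0),a_2^+(t_0),a_3^+(t_0)\in\R$ may be prescribed independently and each yields a smooth instanton, this exhibits the asserted 3-parameter family.

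The crux of the argument — and the step most in need of care — is the boundary analysis separating $a_i^+$ from $a_i^-$: everything hinges on reading off the precise leading pole and parity of each coefficient $P_i^\pm$ from the expansions of $A_i$ and $B_i$, since it is exactly the contrast between the $-2/t$ pole of $P_i^+$ (which produces the admissible $t^2$ solution) and the generic $c_i/t$ pole of $P_i^-$ (which forces the trivial solution) that determines which components persist.
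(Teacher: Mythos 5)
Your proposal follows the paper's proof essentially verbatim in outline: solve the decoupled linear ODEs by integrating factors, then use the Taylor expansions of $A_i, B_i$ at $t=0$ together with Lemma \ref{lemma:9} to kill the $\eta_i^-$ components and keep the three free constants $a_i^+(t_0)$. The one place where your argument as written does not close is the exclusion of $a_i^-$: you assert that ``a finite nonzero limit or a genuine power of $t$'' is in neither case compatible with Lemma \ref{lemma:9}, but a \emph{positive even} power of $t$ would be perfectly compatible (that is exactly why $a_i^+\sim t^2$ survives), so you must actually pin down the exponent. Note also that the coefficient of $a_i^-$ obtained from Lemma \ref{lemma:3} is $\tfrac{B_i}{B_jA_k}+\tfrac{B_i}{A_jB_k}$ (the minus sign in (\ref{eq:abelianinstantons}) is a typo); since both summands behave like $2/t$, the pole is $+4/t$ and the integrating factor gives $a_i^-(t)=a_i^-(t_0)\,t_0^{4}t^{-4}+O(t^{-2})$, which blows up unless $a_i^-(t_0)=0$ --- this is the computation the paper records explicitly and the one-line fix your write-up needs. (With the sign as literally printed, your ``finite nonzero limit'' branch applies and also forces $a_i^-(t_0)=0$, so the conclusion is unaffected either way.)
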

\begin{proof}
We observe that the only principal $\text{U}(1)$-bundle is the trivial one, as the only possible isotropy isomorphism $\lambda: \Delta \text{SU}(2) \rightarrow \text{U}(1)$ is trivial.
We compute the coefficients $a_i^\pm$ from (\ref{eq:connection}) integrating in (\ref{eq:abelianinstantons}), and knowing the Taylor expansions around $t=0$ of the term inside the parenthesis, we have
$$
a_i^+(t)= a_i^+(t_0) \exp \left( - \int_{t_0}^t \left( \dfrac{A_i}{B_j B_k} - \dfrac{A_i}{A_j A_k} \right) ds \right) = a_i^+(t_0)t_0^{-2} t^2 +O(t^4),
$$
$$
a_i^-(t)= a_i^-(t_0) \exp \left( - \int_{t_0}^t \left( \dfrac{B_i}{B_j A_k} + \dfrac{B_i}{A_j B_k} \right) ds \right) = a_i^+(t_0)t_0^{4} t^{-4} +O(t^{-2}),
$$
and both of them are even. 
By Lemma \ref{lemma:9}, the corresponding instantons do extend smoothly to the singular orbit at $t=0$ if and only if $a_i^-(t_0)=0$ for $i=1,2,3$. Then, $a_i^+(t_0) \in \R$, $i=1,2,3$ give the 3-parameter family of $G_2$-instantons. 
\end{proof}

If we specialise Proposition \ref{prop:4} to the Bryant Salamon metric, we get the following corollary for $G_2$-instantons with gauge group $\text{U}(1)$.

\begin{cor}\cite[Corollary 1 (a)]{Lotay17}
Any $\text{SU}(2)^2$-invariant $G_2$-instanton $\theta$ with gauge group $\text{U}(1)$ over the Bryant Salamon $G_2$-manifold $\R^4 \times S^3$ can be written as 
$$
\theta= \dfrac{r^3-1}{r} \sum_{i=1}^3 x_i \eta_i^+,
$$
for some $x_1, x_2, x_3 \in \R$, where $r \in [1, +\infty)$ is a coordinate defined implicitly by
$
t(r)= \int_1^r \dfrac{ds}{\sqrt{1-s^{-3}}}.
$
\end{cor}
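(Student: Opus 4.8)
The plan is to specialize the explicit formula \eqref{eq:abinsexplicit} of Proposition \ref{prop:4} to the Bryant--Salamon metric, for which the enhanced $\text{SU}(2)^3$-symmetry forces $A_1 = A_2 = A_3$ and $B_1 = B_2 = B_3$ with the closed forms $A_1 = \frac{r}{3}\sqrt{1 - r^{-3}}$ and $B_1 = \frac{r}{\sqrt 3}$ recalled in the Example above. First I would substitute these into the integrand appearing in the exponential of \eqref{eq:abinsexplicit}. Since the indices $j,k \neq i$ both contribute $A_1$ and $B_1$, the quantity $\frac{A_i}{B_j B_k} - \frac{A_i}{A_j A_k}$ collapses to $\frac{A_1}{B_1^2} - \frac{1}{A_1}$, which I compute to be $\frac{\sqrt{1-r^{-3}}}{r} - \frac{3}{r\sqrt{1-r^{-3}}}$.

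The key step is then to evaluate $\int\!\left( \frac{A_1}{B_1^2} - \frac{1}{A_1}\right) dt$. Here I would change variables from the arc-length parameter $t$ to $r$ using the defining relation $t(r) = \int_1^r \frac{ds}{\sqrt{1-s^{-3}}}$, so that $dt = \frac{dr}{\sqrt{1-r^{-3}}}$. Multiplying through, the $\sqrt{1-r^{-3}}$ factors cancel and, using $1 - r^{-3} = \frac{r^3-1}{r^3}$, the integrand becomes the rational $1$-form
$$\frac{1}{r}\,dr - \frac{3r^2}{r^3-1}\,dr,$$
which integrates elementarily to $\ln r - \ln(r^3-1) = \ln\frac{r}{r^3-1}$.

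Finally, I would exponentiate. The factor $\exp\!\left(-\int_{t_0}^t(\cdots)\,ds\right)$ becomes $\frac{r_0(r^3-1)}{(r_0^3-1)\,r}$, so each summand of $\theta$ is a constant multiple of $\frac{r^3-1}{r}\,\eta_i^+$; absorbing the $t_0$-dependent constants $a_i^+(t_0)\,t_0^{-2}\,\frac{r_0}{r_0^3-1}$ into new real numbers $x_i$ yields the claimed expression. The absence of $\eta_i^-$ terms is automatic, since the smooth-extension condition $a_i^-(t_0)=0$ from Proposition \ref{prop:4} is already imposed. I expect no serious obstacle: the whole argument is a routine specialization, and the only points requiring care are carrying out the change of variables correctly and observing that the $r\to 1$ boundary behaviour, where $\frac{r^3-1}{r}\to 0$, is consistent with the vanishing of the instanton on the singular orbit.
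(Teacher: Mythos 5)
Your proposal is correct and follows exactly the route the paper intends: the paper presents this corollary as a direct specialization of Proposition \ref{prop:4} to the Bryant--Salamon data $A_1=\frac{r}{3}\sqrt{1-r^{-3}}$, $B_1=\frac{r}{\sqrt{3}}$, and your change of variables from $t$ to $r$ and the resulting elementary integral $\ln r-\ln(r^3-1)$ supply precisely the computation the paper leaves implicit. The handling of the constants (absorbing the $t_0$-dependent factors into the $x_i$) and the observation that the $\eta_i^-$ terms vanish by the smooth-extension condition are both consistent with the paper's argument.
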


\section{\texorpdfstring{$\text{SU}(2)^3$}{SU(2)3}-invariant instantons}\label{sec:SU(2)^3}

One of the main technical results used throughout this section is the following theorem, which was originally due to Malgrange \cite{Mal74}, but we refer to Foscolo--Haskins for this statement.

\begin{theorem}\cite[Theorem 4.7]{FH17}\label{thm:4.7}
Consider the singular initial value problem
\begin{equation}\label{eq:4.8}
\dot{y} = \dfrac{1}{t} M_{-1}(y) + M (t, y),
\quad
y(0) = y_0,
\end{equation}
where $y$ takes values in $\R^k$, $M_{-1}: \R^k \rightarrow \R^k$ is a smooth function of $y$ in a neighbourhood of $y_0$ and $M : \R \times \R^k \rightarrow \R^k$ is smooth in $t, y$ in a neighbourhood of $(0, y_0)$. Assume that
\begin{enumerate}[label=(\roman*)]
    \item $M_{-1} (y_0) = 0$;
    \item $h \text{Id} - d_{y_0} M_{-1}$ is invertible for all $h \in \mathbb{N}$, $h \geq 1$.
\end{enumerate}
Then there exists a unique solution $y(t)$ of (\ref{eq:4.8}). Furthermore $y$ depends continuously on $y_0$ satisfying (i) and (ii).
\end{theorem}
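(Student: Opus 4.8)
The plan is to treat \eqref{eq:4.8} as a singular perturbation of its linear part at $t = 0$, combining a formal power series construction with a contraction mapping argument on a weighted function space. Throughout I would write $L := d_{y_0} M_{-1}$ and, after translating so that $y_0 = 0$, split $M_{-1}(y) = Ly + P(y)$ with $P(y) = O(|y|^2)$; hypothesis (i) guarantees there is no constant term, while hypothesis (ii) says that no positive integer lies in the spectrum of $L$.

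First I would construct a formal solution $\hat{y}(t) = \sum_{n \geq 1} y_n t^n$. Substituting into \eqref{eq:4.8} and matching the coefficient of $t^{n-1}$, the linear term contributes $L y_n$ from $\tfrac{1}{t} L y_n t^n$, whereas all remaining contributions — from $\tfrac1t P$, from $M(t,y)$, and from lower-order interactions — depend only on $y_1, \dots, y_{n-1}$. This yields a recursion $(n\,\mathrm{Id} - L) y_n = c_n(y_1, \dots, y_{n-1})$, which by hypothesis (ii) is solvable uniquely for each $y_n$ in turn. Truncating at order $N$ produces a polynomial $y_{\mathrm{app}}$ whose residual $R(t) := \dot{y}_{\mathrm{app}} - \tfrac1t M_{-1}(y_{\mathrm{app}}) - M(t, y_{\mathrm{app}})$ is $O(t^{N})$, for $N$ as large as desired.

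The analytic heart is to correct $y_{\mathrm{app}}$ to an exact solution. Setting $y = y_{\mathrm{app}} + z$, the remainder satisfies $\dot{z} - \tfrac1t L z = F(t,z)$, where $F$ collects the residual $R(t)$ together with terms that are either higher order in $z$ or carry an extra power of $t$. The operator $\tfrac{d}{dt} - \tfrac1t L$ is inverted by the Green's operator
\[
\mathcal{G}[f](t) = \int_0^t \left(\tfrac{t}{s}\right)^{L} f(s)\, ds ,
\]
and the spectral bound $\|(t/s)^{L}\| \lesssim (t/s)^{\rho}\,(1 + |\log(t/s)|)^{k-1}$, with $\rho = \max \mathrm{Re}\,\mathrm{spec}(L)$, shows that on the weighted space $X_\mu = \{ f \in C([0,\delta],\R^k) : \sup_t t^{-\mu}|f(t)| < \infty \}$ one has $\mathcal{G} : X_{\mu - 1} \to X_\mu$ boundedly whenever $\mu > \rho$. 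Taking the truncation order $N > \rho$ — which the recursion can reach precisely because hypothesis (ii) keeps $n\,\mathrm{Id} - L$ invertible for every $n \leq N$ — the fixed-point map $\Phi(z) = \mathcal{G}[F(\cdot, z)]$ gains a power of $t$ at each application, so on a small ball in $X_N$ over a short interval $[0,\delta]$ it is a contraction. The Banach fixed-point theorem then gives a unique $z = O(t^N)$, hence a unique solution with the prescribed asymptotics, and a bootstrap in analogous $C^k$-weighted norms upgrades $z$ to a function smooth up to $t = 0$.

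For uniqueness I would first note that any $C^1$ solution with $y(0) = y_0$ is forced to be $O(t)$ by the equation together with $M_{-1}(y_0) = 0$, and then that it must match the formal series $\hat y$ term by term (again using invertibility of $n\,\mathrm{Id} - L$); the difference of two solutions is thus $O(t^N)$ for all $N$ and is killed by the same Green's-operator estimate. Continuous dependence follows because condition (ii) is open — eigenvalues vary continuously in $y_0$ and avoiding $\mathbb{N}_{\geq 1}$ is an open condition — so the recursion, the operator $\mathcal{G} = \mathcal{G}_{y_0}$, and the radius $\delta$ may all be chosen uniformly for $y_0$ in a neighbourhood, and a uniform contraction depends continuously on its parameter. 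I expect the main obstacle to be the estimate on $\mathcal{G}$: setting up the weighted Banach space in which the singular linear operator is boundedly invertible and the nonlinearity is a genuine contraction is exactly where non-resonance is indispensable, since without it the recursion stalls at a non-invertible $n\,\mathrm{Id} - L$ and the defining integral for $\mathcal{G}$ diverges, forcing logarithmic, non-smooth terms into the solution.
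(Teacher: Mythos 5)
This statement is not proved in the paper at all: it is quoted verbatim as \cite[Theorem 4.7]{FH17} (and attributed originally to Malgrange), and used as a black box, so there is no in-paper argument to compare against. Judged on its own terms, your strategy is essentially the standard one behind the Foscolo--Haskins/Malgrange result: build the formal power-series solution via the recursion $(n\,\mathrm{Id}-L)y_n=c_n(y_1,\dots,y_{n-1})$, which hypothesis (ii) makes solvable at every order, truncate to get an approximate solution with small residual, and then run a contraction for the correction $z$ using the Green's operator $\mathcal{G}[f](t)=\int_0^t (t/s)^L f(s)\,ds$ on weighted spaces $X_\mu$ with $\mu>\rho=\max\mathrm{Re}\,\mathrm{spec}(L)$. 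The existence half, the mapping property $\mathcal{G}:X_{\mu-1}\to X_\mu$, and the continuous dependence via a uniform contraction are all sound in outline.

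The genuine gap is in the uniqueness step. You assert that any $C^1$ solution with $y(0)=y_0$ "must match the formal series term by term", but a $C^1$ (or even continuous) solution need not admit an asymptotic power-series expansion at $t=0$, and in fact uniqueness in that generality is false: for $k=1$, $M_{-1}(y)=\tfrac{3}{2}y$, $M\equiv 0$, $y_0=0$, hypotheses (i) and (ii) hold ($h-\tfrac32\neq 0$ for all $h\in\mathbb{N}$), yet $y(t)=Ct^{3/2}$ is a one-parameter family of $C^1$ solutions vanishing at $t=0$. More generally, any eigenvalue of $L$ with positive non-integer real part produces homogeneous solutions $t^\lambda$ that vanish at the origin, so the difference of two solutions being $O(t)$ is not enough to kill it with your Green's-operator estimate. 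The uniqueness claimed in the theorem must be understood within the class of solutions that are smooth up to $t=0$ (equivalently, admit a full Taylor expansion there, forcing agreement with $\hat y$ at every order and hence a difference in $\bigcap_N X_N$); this is the class in which the theorem is applied throughout the paper, but your write-up needs to state this restriction and justify why the solutions under comparison have such expansions, rather than deducing it from $C^1$ regularity alone.
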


We note that throughout our discussion, the parameter $t$ is in $\R_{\geq 0}$ instead of $\R$, but we can still apply the previous theorem by extending the relevant functions to $\R$. To do so, we recall that when we say that a function defined on $\R_{\geq 0}$ is even or odd, we mean that their Taylor expansions at $t=0$ have only even or odd terms respectively, so they could be extended to functions with domain containing $0$ in its interior which are even or odd respectively.

We start our discussion by considering the case when the $G_2$-structure enjoys an extra $\text{SU}(2)$-symmetry, i.e.\ $A_1 =A_2 =A_3$ and $B_1 =B_2 =B_3$. 

\subsection{\texorpdfstring{$\text{SU}(2)^3$}{SU(2)3}-invariant ODEs}\label{sec:3ODEs}

As we already discussed abelian $G_2$-instantons 
in Section \ref{sec:abelian}, we now consider the simplest non-abelian gauge group: $\text{SU}(2)$.
The next Proposition simplifies the ODEs and constraints in Lemma \ref{lemma:3} to this case.

\begin{prop}
Let $\theta$ be an $\text{SU}(2)^3$-invariant $G_2$-instanton with gauge group $\text{SU}(2)$ on $\R^4 \times S^3$. There is a standard basis $\{ T_i \}$ of $\mathfrak{su}(2)$ such that (up to an equivariant gauge transformation) we can write
\begin{equation}\label{eq:3.2}
\theta=A_1 x \left( \sum_{i=1}^3 T_i \otimes \eta_i^+ \right) + B_1 y \left( \sum_{i=1}^3 T_i \otimes \eta_i^- \right),
\end{equation}
with $x, y: (0, \infty) \rightarrow \R$ satisfying 
\begin{equation}\label{eq:x}
\dot{x}
= \left( -\dfrac{\dot{A}_1}{A_1} + \dfrac{1}{A_1}- \dfrac{A_1}{B_1^2}  \right) x + y^2 -x^2,
\end{equation}
\begin{equation}\label{eq:y}
\dot{y}
= \left( -\dfrac{\dot{B}_1}{B_1} - \dfrac{2}{A_1} + 2x \right) y.
\end{equation}
\end{prop}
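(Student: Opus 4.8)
The plan is to first reduce an arbitrary $\text{SU}(2)^2$-invariant connection to the diagonal ansatz \eqref{eq:3.2} by exploiting the extra $\text{SU}(2)$-symmetry, and then to substitute that ansatz into the equations \eqref{eq:instantons} of Lemma \ref{lemma:3}. Since an $\text{SU}(2)^3$-invariant instanton is in particular $\text{SU}(2)^2$-invariant, Lemma \ref{lemma:3} applies verbatim, and the hypotheses $A_1=A_2=A_3$, $B_1=B_2=B_3$ hold because the $G_2$-structure itself carries the extra symmetry. Thus the connection is described by $\mathfrak{su}(2)$-valued functions $a_i^\pm(t)$ as in \eqref{eq:connection}, subject to \eqref{eq:instantons} and \eqref{eq:instconstraint}.

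To obtain \eqref{eq:3.2}, I would write $a_i^+ = \sum_j p_{ij}^+ T_j$ and $a_i^- = \sum_j p_{ij}^- T_j$ for real matrices $p^\pm$ depending on $t$. The third $\text{SU}(2)$ factor acts simultaneously on the frame index $i$, rotating $(\eta_1^\pm,\eta_2^\pm,\eta_3^\pm)$ by $\text{SO}(3)$, and on the gauge algebra $\mathfrak{su}(2)$ via the adjoint action (the lift relevant to the homogeneous bundles $P_1$ and $P_\text{id}$), both being the standard three-dimensional representation of $\text{SO}(3)$. Invariance of the connection then requires $R\,p^\pm R^{-1} = p^\pm$ for all $R\in\text{SO}(3)$; equivalently, by Wang's theorem the $\text{SU}(2)^3$-invariant connections correspond to morphisms of $\text{SO}(3)$-representations $\R^3\oplus\R^3 \to \R^3$, and Schur's lemma forces each summand of such a morphism to be a scalar multiple of the identity. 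Hence $p_{ij}^\pm = c^\pm\,\delta_{ij}$ for functions $c^\pm(t)$. Choosing a standard basis $\{T_i\}$ and applying an equivariant gauge transformation to align the gauge frame with it, then setting $c^+ = A_1 x$ and $c^- = B_1 y$, gives precisely \eqref{eq:3.2}. I expect this representation-theoretic reduction to be the main obstacle, the rest being a direct substitution.

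With the ansatz in hand, I would substitute $a_i^+ = A_1 x\,T_i$ and $a_i^- = B_1 y\,T_i$ into \eqref{eq:instantons}, using $[T_j,T_k] = 2T_i$ whenever $(i,j,k)$ is a cyclic permutation of $(1,2,3)$. This yields $[a_j^+,a_k^+] = 2A_1^2 x^2\,T_i$, $[a_j^-,a_k^-] = 2B_1^2 y^2\,T_i$, and $[a_j^-,a_k^+]=[a_j^+,a_k^-] = 2A_1 B_1 xy\,T_i$. Setting all $A_i$ equal to $A_1$ and all $B_i$ equal to $B_1$, the three equations indexed by $i$ collapse to a single scalar equation each; dividing the first by $B_1$ and the second by $A_1$ and collecting the $\dot A_1/A_1$ and $\dot B_1/B_1$ terms produces \eqref{eq:x} and \eqref{eq:y} respectively. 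Finally, I would verify the constraint \eqref{eq:instconstraint}: since $[a_i^+,a_i^-] = A_1 B_1 xy\,[T_i,T_i] = 0$ for each $i$, the constraint holds identically and imposes no further condition. Therefore every pair $(x,y)$ solving \eqref{eq:x}–\eqref{eq:y} defines an $\text{SU}(2)^3$-invariant $G_2$-instanton of the form \eqref{eq:3.2}, which completes the argument.
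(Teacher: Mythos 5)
Your proposal is correct and follows essentially the same route as the paper: the paper invokes the argument of \cite[Proposition 5]{Lotay17} for the reduction to the diagonal ansatz (which is exactly the Wang/Schur's-lemma argument you sketch), and then substitutes into the equations of Lemma \ref{lemma:3}, passing through the intermediate variables $x^\pm = A_1x,\ B_1y$. Your commutator computations, the rescaling, and the observation that the constraint (\ref{eq:instconstraint}) holds trivially all match the paper's proof.
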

\begin{proof}
By the same argument as in \cite[Proposition 5]{Lotay17}\label{prop:5}, we may always write $\theta$ as in (\ref{eq:3.2}). Then, the constraints from Lemma \ref{lemma:3} hold, and the ODEs may be written in two different ways.
First, we observe that if we write
\begin{equation}\label{eq:3.2b}
\theta=x^+ \left( \sum_{i=1}^3 T_i \otimes \eta_i^+ \right) + x^- \left( \sum_{i=1}^3 T_i \otimes \eta_i^- \right),
\end{equation}
then we obtain the following ODEs:
\begin{equation}\label{eq:prevx+}
\dot{x}^+ = \dfrac{x^+}{A_1} \left( 1 -\dfrac{A_1^2}{B_1^2} -x^+ \right) + \dfrac{A_1}{B_1^2} (x^-)^2,
\end{equation}
\begin{equation}\label{eq:prevx-}
\dot{x}^- = \dfrac{2 x^-}{A_1} (x^+ -1).
\end{equation}
This way of writing the equations will be useful in the future.
Now from the relation
$$
x^+ = x A_1, \quad x^-= y B_1,
$$
the system (\ref{eq:prevx+}), (\ref{eq:prevx-}) becomes (\ref{eq:x}), (\ref{eq:y}).
\end{proof}

The next Lemma, whose prove uses Lemma \ref{lemma:10}, tells us when the $G_2$-instantons extend smoothly over the singular orbit $S^3=\text{SU}(2)^2 / \Delta \text{SU}(2)$.
The conditions for the smooth extension will depend on the choice of bundle.

\begin{lemma}\cite[Lemma 4]{Lotay17}\label{lemma:extensionxy}
The connection $\theta$ in equation (\ref{eq:3.2}) extends smoothly over the singular orbit $S^3$ if $x(t)$ is odd, $y(t)$ is even, and their Taylor expansions around $t=0$ are
\begin{enumerate}[label=(\roman*)]
    \item either $x(t)= x_1 t +x_3 t^3 +...$, $y(t)= y_2 t^2 +...$, in which case $\theta$ extends smoothly as a connection on $P_1$;
    \item or $x(t)=\frac{2}{t} + x_1 t +...$, $y(t)=y_0 + y_2 t^2 +...$, in which case $\theta$ extends smoothly as a connection on $P_{\text{id}}$.
\end{enumerate}
\end{lemma}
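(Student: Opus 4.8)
The plan is to reduce the statement to a direct application of Lemma \ref{lemma:10}. First I would read off the components of $\theta$ in the basis $\{\eta_i^\pm\}$. Writing $\theta = x^+\sum_i T_i \otimes \eta_i^+ + x^-\sum_i T_i\otimes\eta_i^-$ as in (\ref{eq:3.2b}), with $x^+ = A_1 x$ and $x^- = B_1 y$, the only nonzero coefficients $b_{ij}^\pm$ appearing in Lemma \ref{lemma:10} are the diagonal ones $b_{ii}^+ = A_1 x$ and $b_{ii}^- = B_1 y$, while $b_{ij}^\pm \equiv 0$ for $i\neq j$; moreover the $\text{SU}(2)^3$-symmetry forces all three diagonal entries to be the same function, which is exactly the form required by Lemma \ref{lemma:10}. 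I would then record the boundary behaviour of the structure functions: from the smooth extension of the $G_2$-structure (\cite[Theorem 4.9]{Alo22}) the function $A_1$ is odd with $A_1 = \tfrac{t}{2} + O(t^3)$, and smoothness of the metric over the singular orbit forces $B_1$ to be even with $B_1(0) = b_0 \neq 0$. With these two facts the proof becomes a matter of substituting the prescribed Taylor expansions of $x$ and $y$ and checking the parity and vanishing-order conditions of Lemma \ref{lemma:10}.

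For case (i) the bundle is $P_1$, on which the isotropy homomorphism $\lambda$ is trivial and hence the canonical connection vanishes, so $\theta$ itself is the tensorial $1$-form to which Lemma \ref{lemma:10}(ii) applies. Substituting $x = x_1 t + x_3 t^3 + \cdots$ and $y = y_2 t^2 + \cdots$ gives $b_{ii}^+ = A_1 x = \tfrac{x_1}{2}t^2 + O(t^4)$ and $b_{ii}^- = B_1 y = b_0 y_2 t^2 + O(t^4)$, both even and vanishing at $t=0$, while the off-diagonal entries vanish identically; this is precisely condition (ii) of Lemma \ref{lemma:10}, so $\theta$ extends smoothly as a connection on $P_1$.

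For case (ii) the bundle is $P_{\text{id}}$, and here the canonical connection $\theta^{\mathrm{can}}$ is \emph{not} zero: it is the left-invariant translation of $d(\mathrm{id})$ on $\mathfrak{k} = \mathfrak{su}^+(2)$, i.e.\ the connection with $x^+\equiv 1$ and $x^-\equiv 0$. I would therefore write $\theta = \theta^{\mathrm{can}} + b$ and apply Lemma \ref{lemma:10}(i) to the difference $b$, whose diagonal components are $b_{ii}^+ = A_1 x - 1$ and $b_{ii}^- = B_1 y$. Substituting $x = \tfrac{2}{t} + x_1 t + \cdots$, the $1/t$ pole of $x$ cancels the simple zero of $A_1$ so that $A_1 x = 1 + c_2^+ t^2 + O(t^4)$ for some constant $c_2^+$, giving $b_{ii}^+ = c_2^+ t^2 + O(t^4)$; and with $y = y_0 + y_2 t^2 + \cdots$ one gets $b_{ii}^- = B_1 y = b_0 y_0 + c_2^- t^2 + O(t^4)$, which has the form $c_0^- + c_2^- t^2 + O(t^4)$. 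Again the $\text{SU}(2)^3$-symmetry guarantees these constants are common to $i=1,2,3$ and the off-diagonal terms vanish, so condition (i) of Lemma \ref{lemma:10} holds and $\theta$ extends smoothly on $P_{\text{id}}$.

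The main obstacle is conceptual rather than computational, and it lies in case (ii): the function $x$ is forced to carry a $1/t$ pole, so $\theta$ looks singular at $t=0$, yet the \emph{geometric} coefficient $A_1 x$ of $\eta_i^+$ is smooth because the pole is exactly absorbed by the vanishing of $A_1$. Recognising that this pole encodes the (smooth) canonical connection on $P_{\text{id}}$, and hence that Lemma \ref{lemma:10}(i) must be applied to $\theta - \theta^{\mathrm{can}}$ rather than to $\theta$ itself, is the one place where care is needed; once the correct reference connection is identified, matching orders and parities against Lemma \ref{lemma:10} is routine.
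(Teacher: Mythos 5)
Your proof is correct and follows exactly the route the paper intends: the paper defers to \cite[Lemma 4]{Lotay17}, noting only that the proof uses Lemma \ref{lemma:10}, and the analogous general case (Lemma \ref{lemma:extensionSU(2)^2}) is proved in the paper precisely by applying Lemma \ref{lemma:10} to $\theta$ on $P_1$ and to $\theta-\theta^{\mathrm{can}}$ on $P_{\mathrm{id}}$, as you do. Your identification of the canonical connection as $x^+\equiv 1$, $x^-\equiv 0$ and the cancellation of the $2/t$ pole against $A_1=\tfrac{t}{2}+O(t^3)$ are both accurate.
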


\subsection{Coclosed \texorpdfstring{$\text{SU}(2)^3$}{SU(2)3}-invariant \texorpdfstring{$G_2$}{G2}-instantons}\label{sec:SU(2)^3coclosed}

In this section we focus on the case where the $G_2$-structures are $\text{SU}(2)^3$-invariant and coclosed but not necessarily torsion-free.
Let $A_1:[0, \infty) \rightarrow \R$ be a smooth function with $A_1(t) >0$ for $t \in (0,\infty)$, such that
\begin{enumerate}[label=(\roman*)]
    \item $A_1$ is odd around $t=0$;
    \item $\dot{A}_1(0)=1/2$.
\end{enumerate}
We denote
$$
A_1(t)=\dfrac{t}{2}+a_{1,3} t^3 + O(t^5).
$$
Consider the coclosed $G_2$-structure found in \cite[Theorem 4.9]{Alo22} for $A_1=A_2=A_3$. By \cite[Section 3.4.2]{Alo22}, we have that the functions $B_1, B_2, B_3$ that define the $G_2$-structure are all equal, so the $G_2$-structure presents an extra $\text{SU}(2)$-symmetry.

\begin{remark}
There is a complicated but explicit expression of $B_1$ in terms of $A_1$ and $b_0$ (see \cite[Section 3.4.1]{Alo22}):
\begin{equation}
    B_1(t)=\sqrt{A_1^{-2}(t) \left( \dfrac{b_0^2}{16} e^{\int_{1/2}^t \frac{1}{A_1(\xi)}d\xi } + e^{\int_{1/2}^t \frac{1}{A_1(\xi)} d\xi} \int_0^t A_1^3(\eta) e^{- \int_{1/2}^\eta \frac{1}{A_1(\xi)} d\xi} d\eta \right) }.
\end{equation}
We also recall that
$$
B_1(t)=b_0 + b_2 t^2 + O(t^4).
$$
\end{remark}

We need to divide our discussion depending on which choice of bundle over a singular orbit we take: $P_1$ or $P_{\text{id}}$ (see Section \ref{sec:extension}).

\subsubsection{Extension on \texorpdfstring{$P_1$}{P1}}

For the manifold $\R^4 \times S^3$ with any of the coclosed $G_2$-structures from \cite{Alo22}, we obtain a 1-parameter family of $G_2$-instantons extending over the singular orbit $P_1$.

\begin{theorem}\label{thm:coclosedClarke}
Let $M=\R^4 \times S^3$, with a $\text{SU}(2)^3$-invariant coclosed $G_2$-structure given by $A_1$ and $b_0 >0$ as in \cite[Theorem 4.9]{Alo22}. 
There is an explicit 1-parameter family of $\text{SU}(2)^3$-invariant $G_2$-instantons with gauge group $\text{SU}(2)$ on the bundle $P_1$, given by
\begin{equation}\label{eq:Ax1}
\theta^{x_1}= \dfrac{x_1 A_1 e^{\int_{1/2}^t F(\xi) d\xi}}{1 + x_1 \int_0^t e^{\int_{1/2}^\eta F(\xi) d\xi} d\eta} \sum_{i=1}^3 T_i \otimes \eta_i^+,
\end{equation}
where
$$
F(t)=-\dfrac{\dot{A}_1}{A_1} + \dfrac{1}{A_1}- \dfrac{A_1}{B_1^2},
$$
and $x_1 \in [0, \infty)$.
Given such $x_1$ we denote the resulting instanton by $\theta^{x_1}$, and $\theta^0$ is the trivial flat connection.
Moreover, any $\text{SU}(2)^3$-invariant $G_2$-instantons with gauge group $\text{SU}(2)$ on the bundle $P_1$ are in this family.
\end{theorem}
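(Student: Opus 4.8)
The plan is to reduce to the ODE pair \eqref{eq:x}--\eqref{eq:y} for the scalars $x,y$ of the normal form \eqref{eq:3.2}, and to show that on $P_1$ one is forced to have $y\equiv 0$. First I would note that for this ansatz the constraint \eqref{eq:instconstraint} is automatic: with $a_i^+=A_1 x\,T_i$ and $a_i^-=B_1 y\,T_i$ one has $[a_i^+,a_i^-]\propto[T_i,T_i]=0$. So, by the Proposition giving \eqref{eq:3.2} and by Lemma \ref{lemma:extensionxy}(i), a $G_2$-instanton on $P_1$ is, up to gauge, the same thing as a solution $(x,y)$ of \eqref{eq:x}--\eqref{eq:y} with $x$ odd, $y$ even and $y(0)=0$.

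For existence I would take $y\equiv 0$, which solves \eqref{eq:y} identically and reduces \eqref{eq:x} to the Bernoulli equation $\dot x=F(t)x-x^2$. The substitution $u=1/x$ linearises this to $\dot u+Fu=1$ with integrating factor $g(t)=\exp\!\big(\int_{1/2}^t F\big)$; solving and inverting recovers exactly \eqref{eq:Ax1}, i.e. $x=x_1 g/\big(1+x_1\int_0^t g\big)$. (Equivalently one differentiates \eqref{eq:Ax1} and checks $\dot x=Fx-x^2$ in one line; note that Theorem \ref{thm:4.7} does \emph{not} apply to pin down $x$, since its hypothesis (ii) fails at $h=1$, which is precisely the free parameter $x_1$.) A parity count gives $F$ odd with $F\sim 1/t$ at $0$, hence $g$ is odd, $h(t)=\int_0^t g$ is even, and $x=x_1 g/(1+x_1 h)$ is odd with $x(0)=0$; together with $y\equiv 0$ this is Lemma \ref{lemma:extensionxy}(i), so $\theta^{x_1}$ extends smoothly over $P_1$. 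The range $x_1\in[0,\infty)$ is dictated by global existence: since $g>0$ the denominator is $\ge 1$ when $x_1\ge 0$, while for $x_1<0$ it vanishes at a finite time and the connection blows up; $x_1=0$ yields the flat connection, and $x_1\mapsto\theta^{x_1}$ is injective.

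The substantive part, and the step I expect to be hardest, is the converse: every instanton on $P_1$ has $y\equiv 0$. I would read \eqref{eq:y} as a linear homogeneous ODE $\dot y=c(t)y$ with $c=-\dot B_1/B_1-2/A_1+2x$. Since $A_1\sim t/2$ we get $-2/A_1\sim -4/t$, whereas $-\dot B_1/B_1$ and, on $P_1$ (where $x(0)=0$), the term $2x$ are bounded near $0$; thus $c(t)=-4/t+O(1)$. Then $y(t)=y(t_0)\exp\!\big(\int_{t_0}^t c\big)$ and $\int_{t_0}^t c\to+\infty$ as $t\to0^+$, so any $y\not\equiv0$ blows up like $t^{-4}$ and cannot extend smoothly; hence $y\equiv0$. (This is exactly where $P_1$ and $P_{\text{id}}$ part ways: for $P_{\text{id}}$ one has $x\sim 2/t$, the $2x$ cancels the $-4/t$ singularity, and $y$ survives, giving the larger family on that bundle.) With $y\equiv0$, $x$ solves the Bernoulli equation with $x(0)=0$, so it is $\theta^{x_1}$ for some $x_1$, and global existence forces $x_1\ge 0$. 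The routine Bernoulli integration and the parity bookkeeping should be unproblematic; the only points needing genuine care are making the $y$ blow-up rigorous and confirming, via the asymptotics of $A_1,B_1$, that $\int_0^\infty g=\infty$, so that the admissible range is exactly $[0,\infty)$ and not a one-sided neighbourhood of $0$.
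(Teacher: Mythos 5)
Your proposal is correct, and for the classification step it takes a genuinely different and more elementary route than the paper. The paper substitutes $x=x_1t+t^3u$, $y=t^2v$ into \eqref{eq:x}--\eqref{eq:y} to obtain the singular initial value problem \eqref{eq:uv}, invokes Theorem \ref{thm:4.7} to get, for each $x_1$, a \emph{unique} solution with the initial data \eqref{eq:u0v0} (in particular $v(0)=0$), and then verifies that the explicit $y\equiv 0$ Bernoulli solution \eqref{eq:1-paramBS} realises that unique solution; uniqueness is what forces every instanton extending on $P_1$ into the family. You instead kill $y$ directly: reading \eqref{eq:y} as $\dot y=c(t)y$ with $c=-\dot B_1/B_1-2/A_1+2x=-4/t+O(1)$ on $P_1$ (where $x$ is bounded near $0$), integration gives $y\sim y(t_0)(t_0/t)^4$, incompatible with smooth extension unless $y\equiv 0$; this avoids the singular-IVP machinery entirely for this theorem and makes transparent why $P_1$ and $P_{\text{id}}$ differ (on $P_{\text{id}}$ the $2x\sim 4/t$ term cancels the singularity). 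The Bernoulli integration, the parity bookkeeping, and the verification of Lemma \ref{lemma:extensionxy}(i) are then the same in both arguments, and your aside that Theorem \ref{thm:4.7} cannot be applied to $x$ itself (the eigenvalue $1$ of $d M_{-1}$ obstructs $h=1$, which is exactly where the free parameter $x_1$ lives) correctly explains why the paper passes to $u$. Two minor caveats: your parameter $x_1$ in $x=x_1g/(1+x_1h)$ differs from the coefficient of $t$ in $x(t)$ by the positive constant $\lim_{t\to0}g(t)/t$ (the paper has the same harmless reparametrisation), and your honest flag that excluding $x_1<0$ requires $\int_0^\infty g=\infty$ is well taken --- the paper only asserts that $x_1\ge0$ yields globally defined connections and, consistent with its remark that no asymptotic restrictions are placed on $A_1$, does not rule out negative $x_1$ either, so your argument is not weaker than the paper's on this point.
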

\begin{proof}
From Lemma \ref{lemma:extensionxy} we see that for the connection $\theta$ to smoothly extend to the singular orbit on the bundle $P_1$, we need $u, v: [0,\infty) \rightarrow \R$ real analytic even functions such that
$$
\begin{array}{ll}
x(t)=x_1 t +t^3 u(t), \\
y(t)=t^2 v(t).
\end{array}
$$
Here we are using the notation of Proposition \ref{prop:5}.
Then the system (\ref{eq:x}), (\ref{eq:y}) gives
\begin{equation}\label{eq:uv}
\begin{array}{ll}
\dot{u}=\dfrac{-2u -x_1^2-x_1(8 a_{1,3}+ 1/2b_0^2)}{t} + f_1(t,u,v), \\
\dot{v}=\dfrac{-6v}{t} + f_2(t,u,v),
\end{array}
\end{equation}
where $f_1, f_2: [0,\infty) \times \R^2 \rightarrow \R$ are some real analytic functions.
Theorem \cite[Theorem 4.7]{FH17} guarantees the existence and uniqueness of solutions to this system in a neighbourhood of the initial value $t=0$ provided that
\begin{equation}\label{eq:u0v0}
\begin{array}{ll}
u(0)=\dfrac{-x_1^2}{2} - x_1 \left( 4 a_{1,3} + \dfrac{1}{4 b_0^2} \right), \\
v(0)=0.
\end{array}
\end{equation}
Suppose $y=0$. Then equation (\ref{eq:x}) becomes
\begin{equation}\label{eq:x0}
\dot{x}
= \left( -\dfrac{\dot{A}_1}{A_1} + \dfrac{1}{A_1}- \dfrac{A_1}{B_1^2}  \right) x  -x^2.
\end{equation}
Let
$$
F(t)=-\dfrac{\dot{A}_1}{A_1} + \dfrac{1}{A_1}- \dfrac{A_1}{B_1^2},
$$
which has the following Taylor expansion at 0:
\begin{equation}\label{eq:Fexpansion}
F(t)= \dfrac{1}{t} + \left( -8 a_{1,3}- \dfrac{1}{2b_0^2} \right) t +O(t^3).
\end{equation}
The equation
\begin{equation}\label{eq:Bernoulli}
\dot{x}=F(t) x -x^2
\end{equation}
is a Bernoulli differential equation, and hence can be solved by making the change of variables $z=x^{-1}$ and later using an integrating factor. By a straightforward computation we get 
that all the solutions to (\ref{eq:Bernoulli}) are $x \equiv 0$ or 
\begin{equation}\label{eq:xk}
x(t)= \dfrac{ e^{\int_{1/2}^t F(\xi) d\xi}}{k+ \int_0^t e^{\int_{1/2}^\xi F(\eta) d\eta} d\xi},
\end{equation}
for some choice of real constant $k$. Note that choosing $k$ is the same as choosing the limits of the integrals. 
Suppose that $k \neq 0$. Then we can write
\begin{equation}\label{eq:1-paramBS}
x(t)= \dfrac{x_1 e^{\int_{1/2}^t F(\xi) d\xi}}{1 + x_1 \int_0^t e^{\int_{1/2}^\eta F(\xi) d\xi} d\eta},
\end{equation}
for some real $x_1$.
This expression, together with $y=0$, agrees with the one corresponding to the unique solution of (\ref{eq:uv}) in a neighbourhood of $t=0$, as a straightforward computation shows that (\ref{eq:u0v0}) holds. 
If we take $x_1 \geq 0$, this explicit expression is well defined for $t >0$,
so the resulting instantons are defined globally on $\R^4 \times S^3$.
It remains to check that (\ref{eq:1-paramBS}) extends smoothly to the singular orbit at $t=0$ as a connection on $P_1$. 
Let $0< t \ll1$, and fix $a \ll 1$, $a>t$. We can write
$$
x(t) 
= \dfrac{x_1 \exp \left( \int_{1/2}^a F(\xi) d\xi \right) \exp \left(\int_a^{t} F(\xi) d\xi \right)}{ 1+ x_1\int_0^{t} \exp \left( \int_{1/2}^a F(\xi) d\xi \right) \exp \left( \int_a^\eta F(\xi) d\xi \right) d\eta}.
$$
We observe that $\int_{1/2}^a F(\xi) d\xi$ is constant, and denote it by $c$.
Then for $\xi \in [t, a]$, we can approximate $F(\xi)\cong \xi^{-1}$.
Then
\begin{equation}\label{eq:xexpansionP1}
\begin{array}{ll}
x(t)
& \cong \dfrac{x_1 e^c \exp \left(\int_a^{t} \xi^{-1} d\xi \right) }{1+ x_1 e^c \int_0^{t} \exp \left( \int_a^\eta \xi^{-1} d\xi \right)  d\eta}
= \dfrac{x_1 e^c a^{-1} t }{1+ x_1 e^c a^{-1} t^2 /2}.
\end{array}
\end{equation}
Hence, $\lim_{t \rightarrow 0} x t^{-1} \in \R$. Finally, the fact that $x$ is odd follows from $F(t)= 1/t + O(t)$ and $F$ odd.
\end{proof}

\begin{remark}
We recover Clarke's instantons \cite{Clarke14} 
\begin{equation}\label{eq:Clarkeins}
\theta^{x_1}= \dfrac{2 x_1 A_1(t)^2}{1+x_1 (B_1(t)^2-1/3)} \sum_{i=1}^3 T_i \otimes \eta_i^+,
\end{equation}
when $A_1=\dfrac{r}{3} \sqrt{1-r^{-3}}$, $b_0 =\dfrac{1}{\sqrt{3}}$ and $r \in [1, + \infty)$ is a coordinate defined implicitly by
$
t(r)= \int_1^r \dfrac{ds}{\sqrt{1-s^{-3}}}.
$
These were the first examples of $G_2$-instantons on $\R^4 \times S^3$ with the Bryant Salamon $G_2$-holonomy metric.
\end{remark}

One may wonder what is the asymptotic behaviour of $\theta^{x_1}$ when $t \rightarrow \infty$. However, this would depend on the asymptotic behaviour of the data $A_1$, and we have not introduced any a priori restrictions on it.

We can compute the curvature of these instantons using (\ref{eq:FA}) and \cite[Lemma 2]{Lotay17}:
$$
F_{\theta^{x_1}}= T_i \otimes \left( \dfrac{d}{dt} (A_1 x) dt \wedge \eta_i^+ +A_1 x(A_1 x-1) \epsilon_{ikj} \eta_j^+ \wedge \eta_k^+ -A_1 x \epsilon_{ijk} \eta_j^- \wedge \eta_k^- \right).
$$
We deduce that 
$$
\lim_{t \rightarrow 0} F_{\theta^{x_1}} = \lim_{t \rightarrow 1} F_{\theta^{x_1}} = - \epsilon_{ijk} T_i \otimes  \eta_j^- \wedge \eta_k^-.
$$
In particular, the curvature is bounded at the singular orbit.

\subsubsection{Extension on \texorpdfstring{$P_\text{id}$}{Pid}}

We now study the existence of smooth $G_2$-instantons on the bundle $P_\text{id}$ at the singular orbit, and its extension away from the singular orbit.

\begin{prop}\label{prop:Pid}
Let $S^3$ be a singular orbit in $\R^4 \times S^3$, with coclosed $G_2$-structure given by $A_1= A_2 =A_3$ as in \cite[Theorem 4.9]{Alo22}. 
There is exactly a 1-parameter family of $\text{SU}(2)^3$-invariant $G_2$-instantons, with gauge group $\text{SU}(2)$, defined in a neighbourhood of $S^3$ and smoothly extending over $S^3$ on $P_{\text{id}}$.
\end{prop}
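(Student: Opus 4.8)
The plan is to reduce to the ODE system (\ref{eq:x}), (\ref{eq:y}) and to apply the Malgrange--Foscolo--Haskins result, Theorem \ref{thm:4.7}, after a change of variable that absorbs the pole forced by the $P_{\text{id}}$ boundary data. By Proposition \ref{prop:5} every $\text{SU}(2)^3$-invariant $G_2$-instanton with gauge group $\text{SU}(2)$ is encoded by a solution $(x,y)$ of (\ref{eq:x}), (\ref{eq:y}), and by Lemma \ref{lemma:extensionxy}(ii) smooth extension on $P_{\text{id}}$ requires $x(t)=\tfrac{2}{t}+x_1 t+\cdots$ (odd, with a prescribed $\tfrac{2}{t}$ pole) and $y(t)=y_0+y_2 t^2+\cdots$ (even). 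I would therefore work with the regularised variables $p:=x-\tfrac{2}{t}$ and $q:=y$, so that smooth extension on $P_{\text{id}}$ becomes the statement that $(p,q)$ is a regular solution with $p(0)=0$.

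Substituting $x=\tfrac{2}{t}+p$ into (\ref{eq:x}), (\ref{eq:y}) and writing $F(t)=\tfrac{1}{t}+g(t)$ with $g$ odd and smooth (so $g(t)/t$ is even and smooth; cf. (\ref{eq:Fexpansion})), the decisive point is that all the singular contributions reorganise: in the $p$-equation the $\tfrac{2}{t^2}$ coming from $F\cdot\tfrac{2}{t}$ and the $-\tfrac{4}{t^2}$ coming from $-x^2$ leave $-\tfrac{2}{t^2}$, which is cancelled exactly by the $+\tfrac{2}{t^2}$ produced in passing from $\dot x$ to $\dot p=\dot x+\tfrac{2}{t^2}$, while the linear terms $\tfrac{p}{t}$ (from $Fp$) and $-\tfrac{4p}{t}$ (from $-x^2$) combine to the indicial term $-\tfrac{3p}{t}$; in the $q$-equation the $-\tfrac{4}{t}$ from $-\tfrac{2}{A_1}$ cancels the $+\tfrac{4}{t}$ from $2x$. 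The resulting system is regular of the form (\ref{eq:4.8}) for $y=(p,q)\in\R^2$:
\begin{align*}
\dot p &= -\frac{3}{t}\,p + \left(\frac{2g(t)}{t} + g(t)\,p - p^2 + q^2\right), \\
\dot q &= \bigl(2p + h(t)\bigr)\,q,
\end{align*}
with $h$ odd and smooth, so that $M_{-1}(p,q)=(-3p,\,0)$ and $M(t,p,q)$ equals the bracketed smooth remainder.

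It then remains to check the hypotheses of Theorem \ref{thm:4.7} at the initial value $(p(0),q(0))=(0,y_0)$. Condition (i), $M_{-1}(0,y_0)=0$, holds precisely because $p(0)=0$; conversely (i) forces $p(0)=0$, so the instantons that extend on $P_{\text{id}}$ are exactly those with $p(0)=0$ and $q(0)=y_0$ free. For (ii) one has $d M_{-1}=\text{diag}(-3,0)$, hence $h\,\text{Id}-d M_{-1}$ has determinant $h(h+3)\neq 0$ for every integer $h\geq 1$. Theorem \ref{thm:4.7} then supplies, for each $y_0\in\R$, a unique solution $(p,q)$ on a neighbourhood of $t=0$ depending continuously on $y_0$, giving the asserted $1$-parameter family, and its uniqueness yields the word ``exactly''. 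To confirm the parity required by Lemma \ref{lemma:extensionxy}(ii) (so that $x$ is genuinely odd and $y$ even), I would again invoke uniqueness: the involution $(p(t),q(t))\mapsto(-p(-t),q(-t))$ preserves both the system and the initial data, hence fixes the solution, forcing $p$ odd and $q$ even. Matching the order-$t$ coefficients would additionally show that $x_1$ is pinned down by $y_0$ through $x_1=-4a_{1,3}-\tfrac{1}{4b_0^2}+\tfrac{y_0^2}{4}$.

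The main obstacle is entirely in the setup: because the $P_{\text{id}}$ data forces a genuine $\tfrac{2}{t}$ pole in $x$, Theorem \ref{thm:4.7} cannot be applied to $(x,y)$ directly, and one must find the right regularising substitution and verify the delicate cancellations that make the shifted system regular---in particular that the $\tfrac{1}{t}$ singularity disappears from the $q$-equation and that the indicial coefficient in the $p$-equation becomes $-3$ rather than $+1$. Once these are in place, the resonance condition (ii) is immediate, since the linearisation of $M_{-1}$ is diagonal with eigenvalues $-3$ and $0$, so no positive integer is resonant.
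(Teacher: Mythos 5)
Your proposal is correct and follows essentially the same route as the paper: reduce to a singular initial value problem of the form (\ref{eq:4.8}), verify hypotheses (i) and (ii) of Theorem \ref{thm:4.7} to get a unique solution for each $y_0$, and then obtain the parity of $x$ and $y$ from uniqueness applied to the reflected solution $(-x(-t),y(-t))$. The only (harmless) difference is the choice of regularising variables --- you use $p=x-2/t$, $q=y$ with indicial matrix $\mathrm{diag}(-3,0)$ and the parameter entering as the free initial value $q(0)=y_0$, whereas the paper uses $u=(x-2/t)/t$, $v=(y-y_0)/t^2$ so that condition (i) instead pins down $u(0)$ and $v(0)$ in terms of $y_0$; your computed value of $x_1$ agrees with the paper's $u(0)$ in (\ref{eq:u0v0id}).
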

\begin{proof}
We see from Lemma \ref{lemma:extensionxy} that for the connection $\theta$ to smoothly extend to a singular orbit at $t=0$, we need $u, v: [0,\infty) \rightarrow \R$ real analytic
functions, such that 
\begin{equation}\label{eq:x,y(u,v)}
\begin{array}{ll}
x(t)=\dfrac{2}{t} +t u(t), \\
y(t)=y_0 +t^2 v(t).
\end{array}
\end{equation}
Then the system (\ref{eq:x}), (\ref{eq:y}) gives
\begin{equation}\label{eq:uvid}
\begin{array}{ll}
\dot{u}=\dfrac{-4u + y_0^2 -16a_{1,3} -1/b_0^2}{t} + f_1(t,u,v), \\
\dot{v}=\dfrac{-2v +2 y_0 u +y_0 (8 a_{1,3}-2b_2/b_0)}{t} + f_2(t,u,v),
\end{array}
\end{equation}
where $f_1, f_2: [0,\infty) \times \R^2 \rightarrow \R$ are some real analytic functions.
Theorem \cite[Theorem 4.7]{FH17} guarantees the existence and uniqueness of solutions to this system in a neighbourhood of the initial value $t=0$ provided that
\begin{equation}\label{eq:u0v0id}
\begin{array}{ll}
u(0)=\dfrac{y_0^2}{4} - 4 a_{1,3} - \dfrac{1}{4b_0^2}, \\
v(0)=\dfrac{y_0^3}{4} - y_0 \left( \dfrac{1}{4b_0^2} -\dfrac{b_2}{b_0} \right).
\end{array}
\end{equation}
Therefore $(x(t), y(t))$ given by (\ref{eq:x,y(u,v)}) provide a solution of (\ref{eq:x}) and (\ref{eq:y}). Both $F(t)$ and $G(t)= -\dot{B}_1(t) /B_1(t) -2/A_1(t)$ are odd. We deduce that $(\tilde{x}(t), \tilde{y}(t)) \coloneqq (-x(-t), y(-t))$ is also a solution of (\ref{eq:x}), (\ref{eq:y}). Note that although $F$ and $G$ are only defined for $t>0$, we can extend them on $t<0$ as odd functions.
We can write 
$$
\begin{array}{ll}
\tilde{x}(t)=\dfrac{2}{t} +t \tilde{u}(t), \\
\tilde{y}(t)=y_0 +t^2 \tilde{v}(t),
\end{array}
$$
for real analytic $\tilde{u}, \tilde{v}$ with $\tilde{u}(0)=u(0)$ and $\tilde{v}(0)=v(0)$, and by uniqueness $\tilde{u}=u$, $\tilde{v}=v$. Therefore, we can smoothly extend $(x,y)$ to $t<0$ by $(x(-t), y(-t))=(-x(t), y(t))$ and they still solve (\ref{eq:x}) and (\ref{eq:y}), which gives that $x$ is odd, $y$ is even as desired.
Hence, for each $y_0$ we have a smooth $G_2$-instanton in a neighbourhood of the singular orbit.
\end{proof}

When these instantons can be extended away from the singular orbit, they will be denoted by $\theta_{y_0}$, where $y_0$ is the initial parameter $y(0)$. 

If $y_0=0$, we can get an explicit expression of the $G_2$-instanton. 

\begin{theorem}\label{thm:Alim}
There is an explicit $\text{SU}(2)^3$-invariant $G_2$-instanton with gauge group $\text{SU}(2)$ on $M=\R^4 \times S^3$ with coclosed $G_2$-structure given by $A_1= A_2 =A_3$ as in \cite[Theorem 4.9]{Alo22} corresponding to $y_0=0$ in Proposition \ref{prop:Pid}.
It is given by
\begin{equation}\label{eq:inscompact}
\theta_0 = \dfrac{A_1(t) e^{\int_{1/2}^t F(\xi) d\xi}}{\int_0^t e^{\int_{1/2}^\eta F(\xi) d\xi} d\eta} \sum_{i=1}^3 T_i \otimes \eta_i^+.
\end{equation}
where
$$
F(t)=-\dfrac{\dot{A}_1}{A_1} + \dfrac{1}{A_1}- \dfrac{A_1}{B_1^2}.
$$
It smoothly extends on the bundle $P_{id}$ over $\R^4 \times S^3$. 
\end{theorem}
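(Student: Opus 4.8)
The plan is to produce the instanton of (\ref{eq:inscompact}) as the distinguished solution of the system (\ref{eq:x})--(\ref{eq:y}) for which the ``$-$'' component vanishes identically. First I would set $y \equiv 0$. Then (\ref{eq:y}) holds automatically, since its right-hand side is proportional to $y$, and (\ref{eq:x}) collapses to the Bernoulli equation $\dot x = F(t)x - x^2$ of (\ref{eq:Bernoulli}), with $F$ exactly as in the statement. This equation was already solved in the proof of Theorem \ref{thm:coclosedClarke}: its solutions are $x \equiv 0$ or the family (\ref{eq:xk}) indexed by a constant $k$. The member relevant for the $P_{\text{id}}$ extension is the one with $k = 0$, namely $x(t) = e^{\int_{1/2}^t F}\big/\int_0^t e^{\int_{1/2}^\eta F}\,d\eta$; multiplying by $A_1$ (recall that with $y = 0$ the connection is $\theta = A_1 x \sum_i T_i \otimes \eta_i^+$) recovers precisely the coefficient displayed in (\ref{eq:inscompact}).

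Next I would verify the asymptotics at $t = 0$ demanded by case (ii) of Lemma \ref{lemma:extensionxy}. Using $F(t) = 1/t + O(t)$ together with the oddness of $F$, I would write $F(\xi) = 1/\xi + G(\xi)$ with $G$ odd and analytic, so that $e^{\int_{1/2}^t F} = C\,t\,H(t)$ for a positive constant $C$ and an even analytic $H$ with $H(0) = 1$. Consequently the denominator $\int_0^t e^{\int_{1/2}^\eta F}\,d\eta = C\int_0^t \eta H(\eta)\,d\eta$ is $C$ times an even function vanishing to second order, say $C t^2 w(t)$ with $w$ even and $w(0) = 1/2$. Hence $x(t) = H(t)\big/(t\,w(t))$ is odd with leading term $2/t$, which is exactly the form $x(t) = 2/t + x_1 t + \dots$ required by Lemma \ref{lemma:extensionxy}(ii); therefore $\theta_0$ extends smoothly over the singular orbit on $P_{\text{id}}$. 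Since $\int_0^t e^{\int_{1/2}^\eta F}\,d\eta > 0$ for every $t > 0$, the formula is moreover defined and smooth on all of $(0,\infty)$, so the instanton is global on $\R^4 \times S^3$.

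Finally I would identify this explicit solution with the $y_0 = 0$ member of Proposition \ref{prop:Pid}. Setting $y \equiv 0$ forces $v \equiv 0$, matching the condition $v(0) = 0$ that (\ref{eq:u0v0id}) prescribes at $y_0 = 0$, and the reduced equation for $u := (x - 2/t)/t$ is again a singular initial value problem of the form (\ref{eq:4.8}), with linear part $M_{-1}(u) = -4u - 16 a_{1,3} - 1/b_0^2$, whose unique solution has $u(0) = -4a_{1,3} - 1/(4b_0^2)$, again as in (\ref{eq:u0v0id}). By the uniqueness in Theorem \ref{thm:4.7}, the explicit $x$ constructed above coincides with the solution furnished by Proposition \ref{prop:Pid} at $y_0 = 0$, so (\ref{eq:inscompact}) is indeed that instanton.

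I expect the only delicate point to be the parity bookkeeping: confirming that $x$ is genuinely \emph{odd}, rather than merely behaving like $2/t$, is what lets Lemma \ref{lemma:extensionxy}(ii) apply and guarantees a smooth (not just bounded) extension across the singular orbit. This is handled by the even/odd factorization $e^{\int_{1/2}^t F} = C\,t\,H(t)$ forced by the oddness of $F$; once that is in hand, everything else is a direct specialization of the computations already performed for Theorem \ref{thm:coclosedClarke}.
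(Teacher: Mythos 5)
Your proposal is correct and follows essentially the same route as the paper: take $y\equiv 0$, recognize the $k=0$ Bernoulli solution from the proof of Theorem \ref{thm:coclosedClarke}, verify the $2/t$ leading behaviour and oddness of $x$ via $F(t)=1/t+O(t)$ with $F$ odd so that Lemma \ref{lemma:extensionxy}(ii) applies, and match with the $y_0=0$ solution of Proposition \ref{prop:Pid} through the singular IVP and the uniqueness of Theorem \ref{thm:4.7}. Your exact factorization $e^{\int_{1/2}^t F}=C\,t\,H(t)$ is in fact a slightly cleaner version of the paper's approximation $F(\xi)\cong\xi^{-1}$ near $0$, but it is the same argument in substance.
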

\begin{proof}
Here we are also using the notation of Proposition \ref{prop:5}.
We first see that $y=0$ gives a solution, which corresponds to the value of the parameter $y_0=0$, by taking the solution $x_0$ from (\ref{eq:xk}) with $k=0$:
$$
x_0(t)= \dfrac{e^{\int_{1/2}^t F(\xi) d\xi}}{\int_0^t e^{\int_{1/2}^\eta F(\xi) d\xi} d\eta}.
$$
We will prove that it satisfies that $x_0(0) A_1 (0)=1$. 
Let $0<t \ll 1$, and fix $a \ll 1$, $a>t$. We can write
$$
x_0(t) A_1(t)
= \dfrac{A_1(t) \exp \left( \int_{1/2}^a F(\xi) d\xi \right) \exp \left(\int_a^{t} F(\xi) d\xi \right)}{ \int_0^{t} \exp \left( \int_{1/2}^a F(\xi) d\xi \right) \exp \left( \int_a^\eta F(\xi) d\xi \right) d\eta}
= \dfrac{A_1(t) \exp \left(\int_a^{t} F(\xi) d\xi \right)}{ \int_0^{t} \exp \left( \int_a^\eta F(\xi) d\xi \right) d\eta},
$$
Then for $\xi \in [t, a]$, we can approximate $F(\xi)$ by $\xi^{-1}$, so
$$
\begin{array}{ll}
x_0(t) A_1(t)
\cong \dfrac{\dfrac{t}{2} \exp \left(\int_a^{t} \xi^{-1} d\xi \right)}{ \int_0^{t} \exp \left( \int_a^\eta \xi^{-1} d\xi \right) d\eta} 
= \dfrac{t^2 a^{-1}/2}{a^{-1} \int_0^{t} \eta  d\eta}
= \dfrac{t^2 a^{-1}/2}{t^2 a^{-1}/2}
=1.
\end{array}
$$
Another computation shows that (\ref{eq:u0v0id}) holds.
Furthermore, $x_0(t)$ being odd follows from $F(t)=1/t +O(t)$ and $F$ odd. 
Hence, as the explicit expression is well defined for all $t \in [0, \infty)$, and the instanton smoothly extends to the singular orbit at $t=0$, the instanton (\ref{eq:inscompact}) extends to the whole manifold.
\end{proof}

\begin{remark}
When $M=\R^4 \times S^3$, $A_1=\dfrac{r}{3} \sqrt{1-r^{-3}}$, $b_0 =\dfrac{1}{\sqrt{3}}$ and $r \in [1, + \infty)$ is a coordinate defined implicitly by
$
t(r)= \int_1^r \dfrac{ds}{\sqrt{1-s^{-3}}},
$
we recover the $G_2$-instanton $A^{\text{lim}}$ from \cite[Theorem 5]{Lotay17}:
\begin{equation}\label{eq:AlimBS}
A^\text{lim} = \dfrac{A_1(t)^2}{1/2 (B_1(t)^2-1/3)} \sum_{i=1}^3 T_i \otimes \eta_i^+.
\end{equation}
\end{remark}

We now move on to study the extension of instantons on $P_\text{id}$ for other values of the parameter $y_0$. Unlike when $y_0=0$, we will not have explicit expressions.
Recall that if we perform the change $x^+ = x A_1$, $x^-= y B_1$, then (\ref{eq:x}) and (\ref{eq:y}) become
\begin{equation}\label{eq:x+}
\dot{x}^+ = \dfrac{x^+}{A_1} \left( 1 -\dfrac{A_1^2}{B_1^2} -x^+ \right) + \dfrac{A_1}{B_1^2} (x^-)^2, 
\end{equation}
\begin{equation}\label{eq:x-}
\dot{x}^- = \dfrac{2 x^-}{A_1} (x^+ -1).
\end{equation}
We observe that the critical points of this system of ordinary differential equations are $(0,0)$, $(1,1)$ and $(1, -1)$. The point $(0,0)$ corresponds to the flat connection $\theta=0$.
The other points, $(1, 1)$ and $(1,-1)$, correspond to connections which are smooth on $P_\text{id}$, and that in the notation of Proposition \ref{prop:Pid} correspond to values of the parameter $y_0$ of $1/b_0$ and $-1/b_0$, respectively. The corresponding connections, which we denote by $\theta_{1/b_0}$ and $\theta_{-1/b_0}$, are:
\begin{equation}\label{eq:flatinst}
\theta_{1/b_0} = \sum_{i=1}^3 T^i \otimes \eta_i^+ + \sum_{i=1}^3 T^i \otimes \eta_i^-, \quad  
\theta_{-1/b_0} = \sum_{i=1}^3 T^i \otimes \eta_i^+ - \sum_{i=1}^3 T^i \otimes \eta_i^-.
\end{equation}
A quick computation shows that both of these connections are flat. 
Note that all flat connections can by identified by (non-invariant) gauge transformations.
We will show that other values of the parameter $y_0$ also give smooth instantons on our manifolds of interest, although no longer explicit.

We say that a subset $R \subset \R^n$ is \textit{forward-invariant} for an ODE system $\dot{x} = F (x, t)$ if for a solution $x(t)$ and a non-singular time $t_0$ (i.e.\ the ODE is regular at $t_0$) such that $x(t_0) \in R$, then $x(t) \in R$ for any $t>t_0$ such that $x(t)$ exists. 
We will use forward-invariance of sets to show that the non-autonomous system of ODEs (\ref{eq:x+}), (\ref{eq:x-}), with initial point $(x^+(0), x^-(0))=(1, y_0 b_0)$, has a solution away from the singular orbit.

\begin{prop}\label{prop:1-paramPid}
For values of the parameter $y_0$ in $[-1/b_0, 1/b_0]$, the $G_2$-instantons from Proposition \ref{prop:Pid} extend to the manifold $\R^4 \times S^3$. We denote these instantons by $\theta_{y_0}$.
\end{prop}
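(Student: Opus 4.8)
The plan is to exhibit a bounded, forward-invariant region for the non-autonomous system (\ref{eq:x+}), (\ref{eq:x-}) that captures the trajectory emanating from the singular orbit, and then to upgrade the local existence of Proposition \ref{prop:Pid} to global existence on $t \in [0,\infty)$ via the standard escape lemma. Recall that the initial condition at $t=0$ is $(x^+(0), x^-(0)) = (1, y_0 b_0)$, since $x^- = y B_1$ and $x A_1 \to 1$ as $t \to 0$. I first dispose of the endpoints $y_0 = \pm 1/b_0$: these correspond to the critical points $(1, \pm 1)$ of the system, whose constant solutions are precisely the flat connections $\theta_{\pm 1/b_0}$ of (\ref{eq:flatinst}) and are manifestly defined on all of $\R^4 \times S^3$. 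It therefore remains to treat $|y_0 b_0| < 1$.

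For the core of the argument I would take $R = [0,1] \times [-1,1]$ and check that the vector field points (weakly) inward along each of its four edges, for every $t>0$ (where $A_1, B_1 > 0$). On $x^+ = 1$ one computes $\dot{x}^+ = \frac{A_1}{B_1^2}\bigl((x^-)^2 - 1\bigr) \le 0$ for $|x^-| \le 1$; on $x^+ = 0$ one has $\dot{x}^+ = \frac{A_1}{B_1^2}(x^-)^2 \ge 0$; on $x^- = 1$ one has $\dot{x}^- = \frac{2}{A_1}(x^+ - 1) \le 0$ for $x^+ \le 1$; and on $x^- = -1$ one has $\dot{x}^- = \frac{2}{A_1}(1 - x^+) \ge 0$ for $x^+ \le 1$. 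By the subtangentiality (Nagumo) criterion this shows that $R$ is forward-invariant in the sense defined above.

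Next I would verify that the trajectory produced by Proposition \ref{prop:Pid} actually lies in $R$ at some small non-singular time $t_0 > 0$, so that forward-invariance applies from $t_0$ onward. Since $x^-(t) = y_0 b_0 + O(t^2)$ and $|y_0 b_0| < 1$, one has $|x^-| < 1$ for small $t$. For $x^+$, expanding $x^+ = x A_1$ with $x = 2/t + t u(t)$ and $A_1 = t/2 + a_{1,3} t^3 + \cdots$, and substituting $u(0)$ from (\ref{eq:u0v0id}), I expect to find $x^+(t) = 1 + \frac{1}{8}\bigl(y_0^2 - b_0^{-2}\bigr) t^2 + O(t^4)$; since $y_0^2 < b_0^{-2}$ the coefficient is negative, so $x^+(t) < 1$ for small $t>0$ and the trajectory sits in the interior of $R$. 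Fixing such a $t_0$, forward-invariance confines the solution to the compact set $R$ for all $t \ge t_0$; because the system is regular for $t>0$ and $R$ is bounded, the escape lemma forbids finite-time blow-up, so the solution extends to all $t \in [t_0, \infty)$. Combined with the smooth extension over the singular orbit from Proposition \ref{prop:Pid}, this yields a globally defined instanton $\theta_{y_0}$.

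The main obstacle I anticipate is the interface between the singular point $t=0$ and the forward-invariance machinery, which is only stated for non-singular starting times: one must genuinely confirm that the trajectory has \emph{entered} $R$ at some $t_0>0$, rather than merely limiting onto its boundary point $(1, y_0 b_0)$. This is exactly what the sign of the $t^2$-coefficient of $x^+$ settles, and it is the one place where the strict hypothesis $|y_0 b_0| < 1$ is used essentially; the closed endpoints $y_0 = \pm 1/b_0$ are recovered separately as the flat solutions above.
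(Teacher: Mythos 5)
Your proposal is correct and follows essentially the same route as the paper: both arguments reduce the problem to showing that the unit box in the $(x^+,x^-)$-plane is forward-invariant for the system (\ref{eq:x+}), (\ref{eq:x-}) by checking the sign of the vector field on its edges, and then rule out finite-time blow-up. The only difference is cosmetic (you work with the closed rectangle $[0,1]\times[-1,1]$ directly, while the paper first uses the symmetry $(x,y)\mapsto(x,-y)$ and the sign-preservation of $x^-$ to reduce to the open square $(0,1)\times(0,1)$), and your explicit computation of the $t^2$-coefficient $\tfrac{1}{8}(y_0^2-b_0^{-2})$ of $x^+$ supplies a detail that the paper only asserts, namely that the trajectory genuinely enters the invariant region for small $t>0$.
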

\begin{proof}
We already considered the case $y_0=0$ on Theorem \ref{thm:Alim}, and $y_0=\pm 1/b_0$ corresponds to the flat instantons (\ref{eq:flatinst}).
Separating variables in (\ref{eq:x-}), we can solve it to get
\begin{equation}\label{eq:x-solution}
x^-(t) = x^-(0) \exp \left( \int_0^t \frac{2}{A_1(\xi)}(x^+(\xi) -1)d\xi \right).
\end{equation}
Therefore, away from the singular orbits, $y$ is either always positive, or always negative, or identically 0.
We observe that $(x,y) \mapsto (x, -y)$ is a symmetry of equations (\ref{eq:x}) and (\ref{eq:y}). Hence, we may assume $y>0$ or equivalently $x^- >0$ and study the case $y_0 \in (0, 1/b_0)$. The remaining case $y_0 \in (-1/b_0, 0)$ will follow by changing the signs of $y$.
We will show that the following set is forward-invariant:
$$
R \coloneqq \{ (x^+, x^-) \in \R^2 \ \vert \ 0 < x^+ < 1, 0 < x^- < 1 \}.
$$
There exists $t_0 >0$ such that the solution $(x^+, x^-)$ of (\ref{eq:x+}) and (\ref{eq:x-}) is defined on $[0, t_0]$ and $(x^+(t_0), x^-(t_0)) \in R$.
First, we see that $\dot{x}^- < 0$ when $x^+ <1$, and $\{ (x_+, 0) \vert 0 \leq x^+ \leq 1 \}$ is an invariant line for the flow. We also observe that for $t>0$
$$
\dot{x}^+ \vert_{x^+ =1} =\dfrac{A_1}{B_1^2}((x^-)^2 -1) <0,
$$
and
$$
\dot{x}^+ \vert_{x^+ =0} 
= \dfrac{A_1}{B_1^2} (x^-)^2 >0,
$$
when $0 < x^- < 1$, as $A_1 >0$, so $\{ (1, x_-) \vert 0 \leq x^- \leq 1 \}$ and $\{ (0, x_-) \vert 0 \leq x^- \leq 1 \}$ are also invariant lines for the flow.
Hence, the set $R$ is forward-invariant and $(x^+, x^-)$ cannot blow-up, so the instanton given by  $(x^+, x^-)$ is defined for every $t \in [0, \infty)$.
\end{proof}

\begin{remark}
Once again, taking $A_1=\dfrac{r}{3} \sqrt{1-r^{-3}}$, $b_0 =\dfrac{1}{\sqrt{3}}$ and $r \in [1, + \infty)$ a coordinate defined implicitly by
$
t(r)= \int_1^r \dfrac{ds}{\sqrt{1-s^{-3}}},
$
we can recover the corresponding 1-parameter family $G_2$-instantons on $\R^4 \times S^3$ with the Bryant--Salamon metric; for this case its existence was shown in \cite[Theorem 3.7, Lemma 3.9]{SteinTurner}, where it is denoted as $T'_{\gamma'}$, $\gamma' \in [-1, 1]$.
\end{remark}

\begin{remark}
    We can go from $\theta_{y_0}$ to $\theta_{-y_0}$ by exchanging the factors of $\text{SU}(2)^2$.
\end{remark}

Putting everything together, we get the following theorems.

\begin{theorem}
Let $M=\R^4 \times S^3$, with a $\text{SU}(2)^3$-invariant coclosed $G_2$-structure given by $A_1$ and $b_0 >0$ as in \cite[Theorem 4.9]{Alo22}. 
There exists two 1-parameter families of smooth $\text{SU}(2)^3$-invariant $G_2$-instantons with gauge group $\text{SU}(2)$: $\theta^{x_1}$, $x_1 \in [0, \infty)$ extending to $P_1$; and $\theta_{y_0}$, $y_0 \in [-1/b_0, 1/b_0]$ extending to $P_\text{id}$.
\end{theorem}

One may hope to make this result into a classification result, by studying the remaining possible situation, corresponding the values of the parameter $y_0$ with $\vert y_0 \vert > 1/b_0 $. In \cite{SteinTurner} it is shown that for $(\R^4 \times S^3, g_{BS})$, solutions corresponding to these initial parameter do not produce uniformly bounded instantons.

\subsection{Behaviour of solutions}\label{sec:behaviour}

We observe the expected bubbling behaviour of the sequence of instantons $\theta^{x_1}$: they ``bubble off'' an anti-self-dual (ASD) connection along the normal bundle to the singular orbit $S^3 = \{ 0 \} \times S^3 \subset \R^4 \times S^3$, which is an associative submanifold.

\begin{theorem}
Let $\theta^{x_1}$, $x_1 \geq 0$ be the sequence of instantons from Theorem \ref{thm:coclosedClarke}, and $\theta_0$ be the instanton from Theorem \ref{thm:Alim}. Then
\begin{enumerate}[label=(\roman*)]
    \item For any $\lambda >0$ there is a sequence $\delta =\delta(x_1, \lambda)>0$ converging to 0 when $x_1 \rightarrow \infty$ such that: for all $p \in S^3$, and if we define
    \begin{equation*}
    \begin{split}
        s_\delta^p : B_1 \subset \R^4 &\rightarrow B_\delta \times \{ p \} \subset \R^4 \times S^3;\\
        x & \mapsto (\delta x, p ),
    \end{split}
\end{equation*}
    then $(s_\delta^p)^* \theta^{x_1} $ converges uniformly with all derivatives to the basic ASD instanton $\theta_\lambda^\text{ASD}$ with scale $\lambda$ on $B_1 \subset \R^4$:
    $$
    \theta_\lambda^{\text{ASD}}= \dfrac{\lambda t^2}{1+ \lambda t^2} \sum_{i=1}^3 T_i \otimes \eta_i^+.
    $$
    \item 
    Suppose that $\theta_0$ is bounded.
    The connections $\theta^{x_1}$ converge to $\theta_0$ given in Theorem \ref{thm:Alim} on every compact subset of $(\R^4 \setminus \{ 0 \}) \times S^3$ when $x_1 \rightarrow \infty$.
\end{enumerate}
\end{theorem}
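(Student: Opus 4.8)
The plan is to reduce all three convergence statements to a single scalar profile. Writing $E(t)=e^{\int_{1/2}^t F(\xi)\,d\xi}$, $N(t)=A_1(t)E(t)$ and $D(t)=\int_0^t E(\eta)\,d\eta$, the two families and the model all share the same algebraic shape: by \eqref{eq:Ax1} and \eqref{eq:inscompact},
\[
\theta^{x_1}=c_{x_1}(t)\sum_{i=1}^3 T_i\otimes\eta_i^+,\qquad \theta_0=\frac{N(t)}{D(t)}\sum_{i=1}^3 T_i\otimes\eta_i^+,\qquad c_{x_1}(t)=\frac{x_1 N(t)}{1+x_1 D(t)}.
\]
Since the $\mathfrak{su}(2)$-valued form $\sum_i T_i\otimes\eta_i^+$ is identical in all three expressions (and in $\theta_\lambda^{\mathrm{ASD}}$), each claim becomes a statement about the scalar functions $c_{x_1}$, $N/D$ and $\tfrac{\lambda t^2}{1+\lambda t^2}$, which I would analyse directly from these closed formulas.

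\textbf{Part (ii).}
A direct computation gives the exact identity $c_{x_1}-N/D=-N/\big(D(1+x_1D)\big)$. On a compact $K\subset(\R^4\setminus\{0\})\times S^3$ the coordinate $t$ ranges over some $[t_0,t_1]$ with $t_0>0$, and since $D'=E>0$ we have $D\ge D(t_0)>0$ there; hence $|c_{x_1}-N/D|\le \|N\|_{C^0(K)}/\big(x_1 D(t_0)^2\big)=O(1/x_1)$. Differentiating the identity, every $t$-derivative of $1/(1+x_1D)$ again carries a factor that is $O(1/x_1)$ on $K$, so $c_{x_1}\to N/D$ in $C^\infty(K)$ as $x_1\to\infty$; multiplying by the fixed smooth forms $T_i\otimes\eta_i^+$ yields $\theta^{x_1}\to\theta_0$ in $C^\infty(K)$. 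The hypothesis that $\theta_0$ is bounded is used only to guarantee that $\theta_0$ is a genuine global connection on $\R^4\times S^3$ to which the $\theta^{x_1}$ converge.

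\textbf{Part (i).}
The heart of the matter is the behaviour of $N$ and $D$ at $t=0$. Using $F(\xi)=1/\xi+O(\xi)$ from \eqref{eq:Fexpansion} I would write $E(t)=2t\,e^{\Psi(t)}$ with $\Psi(t)=\int_{1/2}^t\big(F(\xi)-1/\xi\big)\,d\xi$, which is a smooth even function of $t$ since the integrand is bounded near $0$; setting $\kappa=e^{\Psi(0)}$ gives $E(t)=2\kappa t+O(t^3)$. Combined with $A_1(t)=t/2+O(t^3)$ this yields the key coincidence $N(t)=\kappa t^2+O(t^4)$ and $D(t)=\kappa t^2+O(t^4)$ with the \emph{same} leading constant $\kappa$, and $N,D$ are smooth and even, so $N(t)=t^2\tilde N(t)$, $D(t)=t^2\tilde D(t)$ with $\tilde N,\tilde D$ smooth, even, and $\tilde N(0)=\tilde D(0)=\kappa$. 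Geometrically, the $\eta_i^+$ are the scale-invariant left-invariant one-forms on the collapsing $S^3$-fibres of the normal bundle of the singular orbit, so the rescaling $s_\delta^p\colon x\mapsto(\delta x,p)$ multiplies the fibre-radial coordinate $t$ by $\delta$ while fixing the angular directions; thus $(s_\delta^p)^*\eta_i^+=\eta_i^+$ and $(s_\delta^p)^*\theta^{x_1}=c_{x_1}(\delta\tau)\sum_i T_i\otimes\eta_i^+$, where $\tau=|x|$. Choosing $\delta=\delta(x_1,\lambda)=(\lambda/(x_1\kappa))^{1/2}$, which tends to $0$ as $x_1\to\infty$, makes $x_1\delta^2\kappa=\lambda$ and
\[
c_{x_1}(\delta\tau)=\frac{\lambda\tau^2\,\tilde N(\delta\tau)/\kappa}{1+\lambda\tau^2\,\tilde D(\delta\tau)/\kappa}.
\]
As $\delta\to0$ the smooth even factors $\tilde N(\delta\tau)/\kappa$ and $\tilde D(\delta\tau)/\kappa$ converge to $1$ in $C^\infty(\overline{B_1})$, each $\tau$-derivative gaining a factor $\delta$; since the denominator stays $\ge1$, the quotient converges to $\tfrac{\lambda\tau^2}{1+\lambda\tau^2}$ in $C^\infty(\overline{B_1})$, and evenness guarantees smoothness across $\tau=0$, so $(s_\delta^p)^*\theta^{x_1}\to\theta_\lambda^{\mathrm{ASD}}$ as claimed.

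\textbf{Main obstacle.}
The crux is the asymptotic analysis at $t=0$: establishing that $N$ and $D$ share the same quadratic leading coefficient $\kappa$ — the feature responsible for the limit being exactly the basic instanton rather than a rescaled variant — and that both are smooth and even, which is what upgrades pointwise convergence of the profile to convergence with all derivatives. The remaining geometric input, namely that the $\eta_i^+$ are scale-invariant so that $s_\delta^p$ only rescales the radial coordinate, must also be justified carefully against the tubular-neighbourhood identification of the normal bundle of the singular orbit.
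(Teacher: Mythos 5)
Your proposal is correct and follows essentially the same route as the paper: part (ii) rests on the same exact identity $\theta^{x_1}-\theta_0=-\big(N/D\big)\cdot\big(1+x_1D\big)^{-1}\sum_i T_i\otimes\eta_i^+$ together with a positive lower bound for $D$ on compact sets away from the singular orbit, and part (i) on the same choice $\delta\sim\sqrt{\lambda/x_1}$ after identifying the quadratic leading behaviour of $A_1(t)x(t)$ at $t=0$. Your factorization $E(t)=2t\,e^{\Psi(t)}$ actually makes rigorous the step the paper handles by the approximation $F(\xi)\cong\xi^{-1}$ on $[t,a]$, so your write-up is, if anything, a cleaner version of the same argument.
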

\begin{proof}
(i) 
It follows from (\ref{eq:xexpansionP1}) that there is some constant $c_{x_1}$, increasing with $x_1$, such that  
$$
\begin{array}{ll}
(s_\delta^p)^* \theta^{x_1} 
&= A_1(\delta t) x(\delta t) \sum_{i=1}^3 T_i \otimes \eta_i^+ \\
&=\dfrac{c_{x_1} \delta^2 t^2 /2 +O(x_1 \delta^4 t^4)}{1+ c_{x_1} \delta^2 t^2 /2 +O(x_1 \delta^4 t^4)} \sum_{i=1}^3 T_i \otimes \eta_i^+ \\
\end{array}
$$
By taking $\delta =\delta (x_1, \lambda) = \sqrt{2\lambda /c_{x_1}} >0$, we have that $\delta (x_1, \lambda) \rightarrow 0$ when $x_1 \rightarrow \infty$ and for every $k \in \mathbb{N} \cup \{ 0 \}$, there is a $c_k >0$, not depending on $\lambda, x_1$, such that 
$$
\vert \vert (s_\delta^p)^* \theta^{x_1} - \theta_\lambda^\text{ASD} \vert \vert_{C^k (B_1)} \leq c_k \dfrac{\lambda^2}{c_{x_1}}.
$$
The uniform convergence with all derivatives follows. \\
(ii) We recall that $\theta^{x_1}$ is given by (\ref{eq:Ax1}) and $\theta_0$ by (\ref{eq:inscompact}), and observe that
$\theta^{x_1}$ converges pointwise to $\theta_0$.
We have
$$
\begin{array}{ll}
\vert \theta^{x_1}-\theta_0 \vert
&= A_1 e^{\int_{1/2}^t F(\xi) d\xi} \bigg\rvert \dfrac{x_1}{1 + x_1 \int_0^t e^{\int_{1/2}^\eta F(\xi) d\xi} d\eta} - \dfrac{1}{\int_0^t e^{\int_{1/2}^\eta F(\xi) d\xi} d\eta} \bigg\rvert \bigg\rvert \sum_{i=1}^3 T_i \otimes \eta_i^+ \bigg\rvert \\
&= A_1 e^{\int_{1/2}^t F(\xi) d\xi} \bigg\rvert \dfrac{1}{(1 + x_1 \int_0^t e^{\int_{1/2}^\eta F(\xi) d\xi} d\eta) \int_0^t e^{\int_{1/2}^\eta F(\xi) d\xi} d \eta} \bigg\rvert \bigg\rvert \sum_{i=1}^3 T_i \otimes \eta_i^+ \bigg\rvert \\
&= \vert \theta_0 \vert \bigg\rvert \dfrac{1}{1 + x_1 \int_0^t e^{\int_{1/2}^\eta F(\xi) d\xi} d\eta} \bigg\rvert \\
\end{array}
$$
Let $c_1>0$ be a bound for $\theta_0$. A quick computation (similar to the one at the end of the proof of Theorem \ref{thm:coclosedClarke}) shows that on every compact subset of $(\R^4 \setminus \{ 0 \}) \times S^3$, there exists a constant $c_2 >0$ such that $c_2 \leq \int_0^t e^{\int_{1/2}^\eta F(\xi) d\xi} d\eta$.
Then 
$$
\begin{array}{ll}
\vert \theta^{x_1}-\theta_0 \vert \leq \dfrac{c_1}{1+x_1 c_2}.
\end{array}
$$
Therefore $\theta^{x_1} - \theta_0$ converges uniformly to zero when $x_1 \rightarrow \infty$.
Similarly the derivatives of $\theta^{x_1}- \theta_0$ converge uniformly to zero when $x_1 \rightarrow \infty$.
\end{proof}

\begin{remark}
We are interested in connections that have, at the very least, bounded curvature. 
Hence the condition that $\theta_0$ is bounded on (ii) of the previous theorem is reasonable, as we are already looking for $A_1$ functions that make our instantons bounded.
\end{remark}

\begin{ex}
For $A_1(t)=t/2$, we get
$$
\theta^{x_1}= \dfrac{x_1 t/2}{t^2/4 +c}  \dfrac{1}{1+2 x_1 \log(t^2/4 +c)} \sum_{i=1}^3 T^i \otimes \eta_i^+, \quad 
\theta_0= \dfrac{t^2/4}{t^2/4 +c}  \dfrac{1}{\log(t^2/4+c)} \sum_{i=1}^3 T^i \otimes \eta_i^+.
$$
Then $\theta^{x_1} \rightarrow 0$, $\theta_0 \rightarrow 0$ when $t \rightarrow \infty$. 
\end{ex}

Figure 1 represents the instantons found on $\R^4 \times S^3$ for the $G_2$-structures of \cite{Alo22} and their relations. We note that this general representation of instantons also generalises the one of $(\R^4 \times S^3, g_{BS})$, which is depicted in \cite{SteinTurner}.

\begin{figure}\label{figure:instantons}
\begin{center}
\begin{tikzcd}
                                                           &                                                                                                          & \theta_{b_0^{-1}}             \\
\theta \equiv 0 \arrow[r, "\theta^{x_1}"] & \theta_0 \arrow[ru, "\theta_{y_0}", no head, bend left] \arrow[rd, "\theta_{y_0}"', no head, bend right] &                               \\
                                                           &                                                                                                          & \theta_{-b_0^{-1}} \arrow[uu, leftrightarrow, shift left=10]
\end{tikzcd}
\caption{Representation of families of $G_2$-instantons and their relations.}
\label{spheres}
\end{center}
\end{figure}
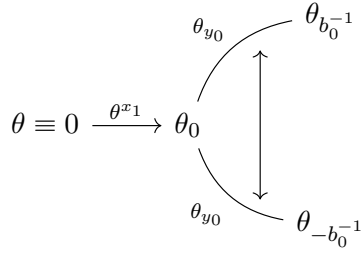

\section{\texorpdfstring{$\text{SU}(2)^2$}{SU(2)2}-invariant \texorpdfstring{$G_2$}{G2}-instantons}\label{sec:SU(2)^2ins}

In this section we consider the most general case, when the coclosed $G_2$-structures are $\text{SU}(2)^2$-invariant. We provide an existence and classification results for $G_2$-instantons on neighbourhoods of the singular orbit. 
Let $A_1, A_2, A_3:[0, \infty) \rightarrow \R$ be smooth functions with $A_i(t) >0$ for $t \in (0,\infty)$, such that
\begin{enumerate}[label=(\roman*)]
    \item $A_i$'s are odd;
    \item $\dot{A}_i(0)=1/2$.
\end{enumerate}
We denote
\begin{equation}\label{eq:tayexpO(5)}
A_i(t)=\dfrac{t}{2} +a_{i,3} t^3 +a_{i,5} t^5 +O(t^7).
\end{equation}
Consider the coclosed $G_2$-structure found in \cite[Theorem 4.9]{Alo22} for $A_1, A_2, A_3$.

\subsection{\texorpdfstring{$\text{SU}(2)^2$}{SU(2)2}-invariant ODEs and boundary conditions}\label{sec:SU(2)^2ODE}

We define
$$
c_i^+= \dfrac{a_i^+}{A_i}, \quad 
c_i^-= \dfrac{a_i^-}{B_i}.
$$
Then the general $\text{SU}(2)^2$-invariant $G_2$-instanton equations (\ref{eq:instantons}) from Lemma \ref{lemma:3} are:
\begin{equation}\label{eq:instantons2}
\begin{array}{ll}
\dot{c}_i^+ + \left( \dfrac{\dot{A}_i}{A_i} + \dfrac{A_i}{B_j B_k} - \dfrac{A_i}{A_j A_k}\right) c_i^+ = \dfrac{1}{2}[c_j^-, c_k^-] -\dfrac{1}{2}[c_j^+, c_k^+], \\
\dot{c}_i^- +\left( \dfrac{\dot{B}_i}{B_i}+ \dfrac{B_i}{B_j A_k} + \dfrac{B_i}{A_j B_k} \right) c_i^- = \dfrac{1}{2}[c_j^-, c_k^+] +\dfrac{1}{2}[c_j^+, c_k^-],
\end{array}
\end{equation}
$\{ i,j,k \}$ is a cyclic permutation of $\{ 1,2,3 \}$,
together with the constraint 
\begin{equation}
\sum_{i=1}^3 [c_i^+, c_i^-]=0.
\end{equation}

\begin{prop}\label{prop:Anotation}
Let $\theta$ be an $\text{SU}(2)^2$-invariant $G_2$-instanton on $\R^4 \times S^3$ with gauge group $\text{SU}(2)$. There is a standard basis $\{ T_i \}$ of $\mathfrak{su}(2)$ such that (up to an invariant gauge transformation) we can write
\begin{equation}\label{eq:Prop5.1}
\theta= \sum_{i=1}^3 A_i f_i^+ T_i \otimes \eta_i^+ + \sum_{i=1}^3 B_i f_i^- T_i \otimes \eta_i^- .
\end{equation}
with $f_i^\pm: [0, \infty) \rightarrow \R$ satisfying 
\begin{equation}\label{eq:fgh+-}
\begin{split}
\dot{f}_1^+ + \left( \dfrac{\dot{A}_1}{A_1} + \dfrac{A_1}{B_2 B_3} - \dfrac{A_1}{A_2 A_3} \right) f_1^+
&= f_2^- f_3^- - f_2^+ f_3^+, \\
\dot{f}_2^+ + \left( \dfrac{\dot{A}_2}{A_2} + \dfrac{A_2}{B_1 B_3} - \dfrac{A_2}{A_1 A_3} \right) f_2^+
&= f_1^- f_3^- - f_1^+ f_3^+, \\
\dot{f}_3^+ + \left( \dfrac{\dot{A}_3}{A_3} + \dfrac{A_3}{B_1 B_2} - \dfrac{A_3}{A_1 A_2} \right) f_3^+
&= f_1^- f_2^- - f_1^+ f_2^+, \\
\dot{f}_1^- + \left( \dfrac{\dot{B}_1}{B_1}+ \dfrac{B_1}{B_2 A_3} + \dfrac{B_1}{A_2 B_3} \right) f_1^-
&= f_2^- f_3^+ + f_2^+ f_3^-, \\
\dot{f}_2^- + \left( \dfrac{\dot{B}_2}{B_2}+ \dfrac{B_2}{B_1 A_3} + \dfrac{B_2}{A_1 B_3} \right) f_2^-
&= f_3^- f_1^+ + f_3^+ f_1^-, \\
\dot{f}_3^- + \left( \dfrac{\dot{B}_3}{B_3}+ \dfrac{B_3}{B_1 A_2} + \dfrac{B_3}{A_1 B_2} \right) f_3^-
&= f_2^- f_1^+ + f_2^+ f_1^-. \\
\end{split}
\end{equation}
\end{prop}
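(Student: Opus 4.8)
The plan is to follow the two–step strategy used for the $\text{SU}(2)^3$ case and in \cite[Proposition 5]{Lotay17}: first bring $\theta$ into the diagonal form (\ref{eq:Prop5.1}) by an invariant gauge transformation, and then substitute into the general system (\ref{eq:instantons2}) to read off the six scalar equations (\ref{eq:fgh+-}). Writing $c_i^{\pm}=\sum_{j}c_{ij}^{\pm}T_j$ with $c_i^+=a_i^+/A_i$ and $c_i^-=a_i^-/B_i$, the assertion (\ref{eq:Prop5.1}) is exactly that, in a suitable standard basis $\{T_i\}$, one has $c_i^{+}=f_i^{+}T_i$ and $c_i^{-}=f_i^{-}T_i$ (equivalently $a_i^{+}=A_if_i^{+}T_i$, $a_i^{-}=B_if_i^{-}T_i$); that is, the six $\mathfrak{su}(2)$-valued functions $c_i^{\pm}$ are simultaneously aligned with the coordinate axes of $\mathfrak{su}(2)\cong\R^3$.

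Granting the diagonal form, the derivation of (\ref{eq:fgh+-}) is a direct computation. For a cyclic permutation $\{i,j,k\}$ of $\{1,2,3\}$ the relation $[T_i,T_j]=2\epsilon_{ijk}T_k$ gives $[T_j,T_k]=2T_i$, so with $c_i^{\pm}=f_i^{\pm}T_i$ the brackets on the right-hand side of (\ref{eq:instantons2}) become $\tfrac12[c_j^-,c_k^-]=f_j^-f_k^-\,T_i$, $\tfrac12[c_j^+,c_k^+]=f_j^+f_k^+\,T_i$, and $\tfrac12[c_j^-,c_k^+]+\tfrac12[c_j^+,c_k^-]=(f_j^-f_k^++f_j^+f_k^-)\,T_i$. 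Every term of (\ref{eq:instantons2}) is then proportional to $T_i$, and comparing $T_i$-components yields precisely (\ref{eq:fgh+-}). The constraint $\sum_i[c_i^+,c_i^-]=0$ holds automatically, since $[c_i^+,c_i^-]=f_i^+f_i^-[T_i,T_i]=0$. The same computation shows that the diagonal subspace $D=\{c_i^{\pm}\in\mathrm{span}(T_i)\}$ is invariant under the flow of (\ref{eq:instantons2}): at a point of $D$ both the linear term and the quadratic right-hand side of each equation lie in $\mathrm{span}(T_i)$, so the vector field is tangent to $D$.

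The substantive step, and where I expect the main difficulty, is the gauge reduction to $D$. The only residual invariant gauge freedom in temporal gauge is by a constant $g\in\text{SU}(2)$ acting diagonally, $c_i^{\pm}\mapsto \mathrm{Ad}_g\,c_i^{\pm}$, i.e.\ a single $\text{SO}(3)$ acting on all six vectors at once (together with the discrete relabellings of $\{T_i\}$). Since this is only three-dimensional, no purely algebraic argument can align six generic vectors, so one must exploit the constraints built into the problem. Following \cite[Proposition 5]{Lotay17}, the alignment is forced at the singular orbit: the smoothness conditions of Lemma \ref{lemma:10} across $S^3=\{0\}\times S^3$, combined with the $\Delta\text{SU}(2)$-isotropy action (under which the $\eta_i^{\pm}$, and on $P_{\text{id}}$ also the fibre $\mathfrak{su}(2)$, transform in the vector representation), constrain the initial data to lie in $D$ after a constant gauge transformation. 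One then propagates: using the flow-invariance of $D$ noted above together with the uniqueness statement of Theorem \ref{thm:4.7} for the associated singular initial value problem, the unique smooth solution with such data coincides with the solution of the reduced diagonal system and hence remains in $D$ for all $t$, so $\theta$ takes the form (\ref{eq:Prop5.1}). Carrying out this singular-orbit alignment carefully and separately for $P_1$ and $P_{\text{id}}$ is the delicate point, for which I would rely on the method of \cite[Proposition 5]{Lotay17}.
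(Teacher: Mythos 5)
Your reduction of the system once the diagonal form is granted is correct and matches the paper: with $c_i^{\pm}=f_i^{\pm}T_i$ and $[T_j,T_k]=2T_i$ the brackets in (\ref{eq:instantons2}) collapse to the products in (\ref{eq:fgh+-}), the constraint is automatic, and the diagonal locus is preserved by the flow. The divergence, and the problem, is in the step you yourself flag as delicate. The paper does \emph{not} obtain the alignment from the singular orbit. Its mechanism is a rotating-frame argument on the principal part: one writes $a(t)=\gamma(t)\left(\sum_i A_if_i^+T_i\otimes\eta_i^+ + \sum_i B_if_i^-T_i\otimes\eta_i^-\right)\gamma(t)^{-1}$ with $\gamma:[0,\infty)\to\text{SU}(2)$, notes that differentiating produces the extra terms $A_if_i^+\gamma[\gamma^{-1}\dot\gamma,T_i]\gamma^{-1}$ and $B_if_i^-\gamma[\gamma^{-1}\dot\gamma,T_i]\gamma^{-1}$, and observes that the instanton equations (\ref{eq:instantons}) force $A_if_i^+[\gamma^{-1}\dot\gamma,T_i]=0$ and $B_if_i^-[\gamma^{-1}\dot\gamma,T_i]=0$ for each $i$; hence $\dot\gamma=0$ whenever $\theta\neq 0$, and a single constant gauge transformation diagonalises $\theta$ for all $t$. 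So the constancy of the frame comes out of the evolution equations themselves, not out of boundary data.

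The route you propose instead --- force the data into the diagonal subspace $D$ at $t=0$ using Lemma \ref{lemma:10}, then propagate by flow-invariance and uniqueness --- has a concrete gap at its first step. On $P_1$, Lemma \ref{lemma:10} only requires all components $b_{ij}^{\pm}$ to be even and to vanish at $t=0$; on $P_{\text{id}}$ it requires the off-diagonal components to be $O(t^4)$. Neither condition forces the off-diagonal components to vanish identically, so the smoothness conditions at the singular orbit do not place the free parameters of the singular initial value problem inside $D$, and uniqueness in Theorem \ref{thm:4.7} then only tells you that diagonal data stay diagonal --- it does not exclude non-diagonal solutions. Your appeal to the isotropy representation is also not available here: \cite[Proposition 5]{Lotay17} uses the extra $\Delta\text{SU}(2)$ of the $\text{SU}(2)^3$-invariant setting acting simultaneously on the $\eta_i^{\pm}$ and the fibre, whereas in the $\text{SU}(2)^2$-invariant case the principal isotropy is trivial and the singular isotropy only yields the weak conditions of Lemma \ref{lemma:10} just quoted. (It is fair to note that the paper's own first step --- that on each slice the six coefficients can be simultaneously diagonalised by some $\gamma(t)$ --- is itself stated very tersely; but the essential ingredient you are missing is the use of the $\dot a$-terms in the instanton equations to kill $\gamma^{-1}\dot\gamma$, and without it, or some substitute, your argument does not close.)
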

\begin{proof}
We must consider $\text{SU}(2)^2$-homogeneous $\text{SU}(2)$-bundles over a slice $S^3 \times S^3 \cong \text{SU}(2)^2 / \{ 1 \}$. Such bundles are parameterized by the trivial isotropy homomorphism $1: \{ 1 \} \rightarrow \text{SU}(2)$.
By Wang's Theorem \cite[Theorem 1]{Wang58}, invariant connections on the bundle $\text{SU}(2)^2 \times_{(\{ 1 \}, 1)} \text{SU}(2)$ can be written as a left-invariant extension 
$\Lambda: (\mathfrak{m}, \text{Ad}) \rightarrow (\mathfrak{su}(2), \text{Ad} \circ 1)$. 
Here $\mathfrak{m}$ splits into irreducibles as 
$$
\mathfrak{m}= \underbrace{\R \oplus ... \oplus \R}_{\text{6 times}}
$$
Therefore, we can apply a gauge transformation so that 
$$
a=
\sum_{i=1}^3 A_i f_i^+ T_i \otimes \eta_i^+  
+ \sum_{i=1}^3 B_i f_i^- T_i \otimes \eta_i^-,
$$
where $f_i^\pm$, $i=1,2,3$, are constants.
We then extend this connection to $\R^4 \times S^3$ and get
$$
a(t)
=\gamma \left( \sum_{i=1}^3 A_i f_i^+ T_i \otimes \eta_i^+  
+ \sum_{i=1}^3 B_i f_i^- T_i \otimes \eta_i^- \right) \gamma^{-1},
$$
for $\gamma: [0,\infty) \rightarrow \text{SU}(2)$ and $f_i^\pm: [0,\infty) \rightarrow \R$.
The constraint (\ref{eq:instconstraint}) is then satisfied. 
We also have that 
$$
\begin{array}{ll}
\dot{a}_i^+=(A_i f_i^+)^{\cdot} \gamma T_i \gamma^{-1} +A_i f_i^+ \gamma [\gamma^{-1} \dot{\gamma}, T_i] \gamma^{-1}, \\
\dot{a}_i^-=(B_i f_i^-)^{\cdot} \gamma T_i \gamma^{-1} +B_i f_i^- \gamma [\gamma^{-1} \dot{\gamma}, T_i] \gamma^{-1},
\end{array}
$$
so equation (\ref{eq:instantons}) implies that
$$
A_i f_i^+ [\gamma^{-1} \dot{\gamma}, T_i]=0, \quad B_i f_i^- [\gamma^{-1} \dot{\gamma}, T_i]=0,
$$
$i=1,2,3$.
Finally, if $\theta \neq 0$ then $\dot{\gamma}=0$, so we may always find a gauge transformation such that $\theta$ is written as in (\ref{eq:Prop5.1}). Substituting this expression into (\ref{eq:instantons}) finishes the proof.
\end{proof}

The next Lemma deals with the conditions for the extension to the singular orbit, which is $\text{SU}(2)^2 / \Delta \text{SU}(2) \cong S^3$.

\begin{lemma}\label{lemma:extensionSU(2)^2}
The connection $\theta$ extends smoothly over the singular orbit $S^3$ if and only if $f_i^+$ are odd, $f_i^-$ are even, and their Taylor expansions around 0 are
\begin{itemize}
    \item either 
$$
\begin{array}{ll}
f_i^+=f_{i,1}^+ t +O(t^3), \quad f_i^-=f_{i,2}^- t^2 +O(t^4),
\end{array}
$$
for $i=1,2,3$, in which case $\theta$ extends smoothly as a connection on $P_1$;
    \item or
$$
\begin{array}{ll}
f_i^+=\dfrac{2}{t}+ (b_2^+ - 4a_{i,3})t +O(t^3), \quad f_i^-=b_0^- +b_2^- t^2 +O(t^4),
\end{array}
$$
for $i=1,2,3$, in which case $\theta$ extends smoothly as a connection on $P_{\text{id}}$.
\end{itemize}
\end{lemma}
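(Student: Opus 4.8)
The plan is to substitute the diagonal form of $\theta$ from Proposition \ref{prop:Anotation} into the Eschenburg--Wang extension criterion of Lemma \ref{lemma:10} and read off the resulting constraints on the functions $f_i^\pm$. Writing $\theta = \sum_i b_i^+ \otimes \eta_i^+ + \sum_i b_i^- \otimes \eta_i^-$ with $b_i^\pm = \sum_j b_{ij}^\pm T_j$ in the notation of Lemma \ref{lemma:10}, the connection (\ref{eq:Prop5.1}) is diagonal: $b_{ij}^+ = A_i f_i^+ \,\delta_{ij}$ and $b_{ij}^- = B_i f_i^- \,\delta_{ij}$. Hence all off-diagonal entries ($i\neq j$) vanish identically, so they automatically satisfy the conditions of Lemma \ref{lemma:10} for both bundles (they are trivially even, vanish at $t=0$, and are $O(t^4)$). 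The entire content of the extension problem therefore reduces to the three diagonal functions $A_i f_i^+$ and the three diagonal functions $B_i f_i^-$. Since Lemma \ref{lemma:10} records sufficiency while the underlying Eschenburg--Wang analysis is sharp, the resulting conditions are in fact necessary too, which gives the ``only if'' direction and hence the claimed equivalence.

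First I would treat $P_1$, where the canonical connection is trivial, so Lemma \ref{lemma:10}(ii) requires each $b_{ii}^\pm$ to be even with $b_{ii}^\pm(0)=0$. Because $A_i$ is odd and $B_i$ is even with $B_i(0)=b_0\neq 0$, evenness of $A_i f_i^+ = b_{ii}^+$ forces $f_i^+$ to be odd, and evenness of $B_i f_i^- = b_{ii}^-$ forces $f_i^-$ to be even; the vanishing $B_i f_i^-(0)=0$ then forces $f_i^-(0)=0$. Feeding in the expansion (\ref{eq:tayexpO(5)}), namely $A_i = t/2 + O(t^3)$, immediately yields $f_i^+ = f_{i,1}^+ t + O(t^3)$ and $f_i^- = f_{i,2}^- t^2 + O(t^4)$, exactly as stated.

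The case $P_\text{id}$ is the one requiring care, and is where I expect the main obstacle. Here the canonical invariant connection is nontrivial: the left-invariant extension of $d\lambda\oplus 0:\mathfrak{k}\oplus\mathfrak{m}\to\mathfrak{su}(2)$ with $\lambda=\text{id}$ is $A^{\text{can}}_{\text{id}}=\sum_i T_i\otimes\eta_i^+$, so Lemma \ref{lemma:10}(i) must be applied to the tensorial difference $\theta - A^{\text{can}}_{\text{id}}$, whose diagonal entries are $b_{ii}^+ = A_i f_i^+ - 1$ and $b_{ii}^- = B_i f_i^-$. The requirement $b_{ii}^+ = c_2^+ t^2 + O(t^4)$ with a common constant $c_2^+$ then reads $A_i f_i^+ = 1 + c_2^+ t^2 + O(t^4)$; inverting $A_i$ via $A_i^{-1} = 2/t - 4a_{i,3}\,t + O(t^3)$ gives $f_i^+ = 2/t + (2c_2^+ - 4a_{i,3})\,t + O(t^3)$, which is precisely the stated expansion once we set $b_2^+ := 2c_2^+$. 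The simple pole $2/t$ (rather than an $O(t)$ term as on $P_1$) is forced exactly by matching the canonical connection, and the interaction between the common leading constant $c_2^+$ and the $i$-dependent coefficient $a_{i,3}$ is what produces the $i$-dependence $(b_2^+ - 4a_{i,3})$ in the linear term. For the minus components, $B_i f_i^- = c_0^- + c_2^- t^2 + O(t^4)$ together with $B_i$ even and $B_i(0)=b_0$ gives $f_i^-$ even with $f_i^- = b_0^- + b_2^- t^2 + O(t^4)$, where $b_0^- = c_0^-/b_0$. The main bookkeeping hazard is tracking which leading coefficients Lemma \ref{lemma:10} forces to be common across $i$ and expanding the reciprocal $1/A_i$ to the order needed; as a consistency check, specialising to $A_1=A_2=A_3$, $B_1=B_2=B_3$ must reproduce Lemma \ref{lemma:extensionxy}.
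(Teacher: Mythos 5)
Your proposal is correct and follows essentially the same route as the paper: the paper's proof likewise just applies Lemma \ref{lemma:10} to $\theta$ for $P_1$ and to the difference $\theta-\theta^{\mathrm{can}}=\sum_i(A_if_i^+-1)T_i\otimes\eta_i^++\sum_iB_if_i^-T_i\otimes\eta_i^-$ for $P_{\mathrm{id}}$, then reads off the stated expansions. Your version merely spells out the diagonal bookkeeping and the inversion of $A_i$ that the paper leaves implicit.
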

\begin{proof}
For $P_1$, we apply \cite[Lemma 10]{Lotay17} to $\theta$ and get the first set of expressions above.
For $P_{\text{id}}$, we apply \cite[Lemma 10]{Lotay17} to
$$
\theta- \theta^{\text{can}}
= \sum_{i=1}^3 (A_i f_i^+ -1)T_i \otimes \eta_i^+ 
+ \sum_{i=1}^3 B_i f_i^- T_i \otimes \eta_i^-.
$$
We obtain the second set of expressions above.
\end{proof}

\subsection{Extension on \texorpdfstring{$P_1$}{P1}}\label{sec:P1}

We start by studying the existence of $G_2$-instantons around a singular orbit extending as a connection on $P_1$. We find a 3-parameter family of $G_2$-instantons defined in a neighbourhood of the singular orbit. 

\begin{prop}
Let $M= \R^4 \times S^3$, with coclosed $G_2$-structure given by $A_1, A_2, A_3$ as in \cite[Theorem 4.9]{Alo22}, and let $S^3 \subset M$ be the singular orbit.
There is a 3-parameter family of $\text{SU}(2)^2$-invariant $G_2$-instantons with gauge group $\text{SU}(2)$ in a neighbourhood of $S^3$ which smoothly extends in $P_1$. 
In the notation of Proposition \ref{prop:Anotation}, these instantons have $f_i^- =0$, $i=1,2,3$ and $f_1^+, f_2^+, f_3^+: (0,\infty) \rightarrow \R$ solve the ODEs
\begin{equation}\label{eq:fgh}
\begin{split}
\dot{f}_1^+ + \left( \dfrac{\dot{A}_1}{A_1} + \dfrac{A_1}{B_2 B_3} - \dfrac{A_1}{A_2 A_3} \right) f_1^+
&= - f_2^+ f_3^+, \\
\dot{f}_2^+ + \left( \dfrac{\dot{A}_2}{A_2} + \dfrac{A_2}{B_1 B_3} - \dfrac{A_2}{A_1 A_3} \right) f_2^+
&= - f_1^+ f_3^+, \\
\dot{f}_3^+ + \left( \dfrac{\dot{A}_3}{A_3} + \dfrac{A_3}{B_1 B_2} - \dfrac{A_3}{A_1 A_2} \right) f_3^+
&= - f_1^+ f_2^+.
\end{split}
\end{equation}
subject to $f_i^+=f_{i,1}^+ t + t^3 u_i(t)$, $i=1,2,3$, where $f_{i,1}^+ \in \R$ and the $u_i : (0,\infty) \rightarrow \R$ are real analytic functions such that
\begin{equation}
\begin{array}{ll}
u_1(0) =-\left( \dfrac{1}{4b_0^2} + 2 a_{2,3}+2 a_{3,3} \right) f_{1,1}^+ - f_{2,1}^+ f_{3,1}^+, \\
u_2(0) =-\left( \dfrac{1}{4b_0^2} + 2 a_{1,3}+2 a_{3,3} \right) f_{2,1}^+ - f_{1,1}^+ f_{3,1}^+, \\
u_3(0) =-\left( \dfrac{1}{4b_0^2} + 2 a_{1,3}+2 a_{2,3} \right) f_{3,1}^+ - f_{1,1}^+ f_{2,1}^+.
\end{array}
\end{equation}
\end{prop}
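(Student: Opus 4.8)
The plan is to produce the family by the singular initial value problem method of Theorem \ref{thm:4.7}, exactly as in the proofs of Theorem \ref{thm:coclosedClarke} and Proposition \ref{prop:Pid}, after first reducing the full system (\ref{eq:fgh+-}) of six ODEs to the three equations (\ref{eq:fgh}). First I would observe that the locus $\{f_1^-=f_2^-=f_3^-=0\}$ is invariant under the flow of (\ref{eq:fgh+-}): setting all $f_i^-\equiv 0$ makes the right-hand sides $f_j^- f_k^+ + f_j^+ f_k^-$ of the three $f_i^-$-equations vanish identically, so the homogeneous linear equations $\dot f_i^- + (\cdots)f_i^- = 0$ are solved by $f_i^-\equiv 0$. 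On this locus the remaining equations are precisely (\ref{eq:fgh}), and the $P_1$-extension conditions of Lemma \ref{lemma:extensionSU(2)^2} for the $f_i^-$ (even, vanishing to order $t^2$) hold trivially.

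Next I would impose the $P_1$-condition on the $f_i^+$, writing $f_i^+ = f_{i,1}^+\, t + t^3 u_i(t)$ with $u_i$ real analytic and even and the $f_{i,1}^+\in\R$ free. The key computation is the Taylor expansion at $t=0$ of the coefficient functions $P_i(t) = \dot A_i/A_i + A_i/(B_jB_k) - A_i/(A_jA_k)$. Using (\ref{eq:tayexpO(5)}), namely $A_i = \tfrac{t}{2} + a_{i,3}t^3 + O(t^5)$, together with $B_i = b_0 + O(t^2)$, a direct expansion gives $P_i(t) = -\tfrac1t + c_i\, t + O(t^3)$, with $c_i = \tfrac{1}{2b_0^2} + 4a_{j,3} + 4a_{k,3}$ for $\{i,j,k\}$ a cyclic permutation of $\{1,2,3\}$; the crucial point is that the singular part of each $P_i$ is exactly $-1/t$. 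Substituting $f_i^+ = f_{i,1}^+ t + t^3 u_i$ into (\ref{eq:fgh}), the order-$t^0$ terms cancel by the choice of leading coefficient, and after dividing by $t^3$ the system takes the singular form $\dot u = \tfrac1t M_{-1}(u) + M(t,u)$ of (\ref{eq:4.8}), where $y=(u_1,u_2,u_3)\in\R^3$, $M$ is real analytic, and $M_{-1}(u)_i = -2u_i + \beta_i$ with each constant $\beta_i$ assembled from $c_i f_{i,1}^+$ and the product $f_{j,1}^+ f_{k,1}^+$.

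It then remains to verify the hypotheses of Theorem \ref{thm:4.7}. Since $M_{-1}$ is affine with linear part $-2\,\mathrm{Id}$, hypothesis (ii) holds because $h\,\mathrm{Id} - d_{y_0}M_{-1} = (h+2)\,\mathrm{Id}$ is invertible for every integer $h\ge 1$, so there is no resonance. Hypothesis (i), $M_{-1}(y_0)=0$, pins down the initial values $u_i(0)=\beta_i/2$, which are the boundary values recorded in the statement; Theorem \ref{thm:4.7} then yields a unique solution $(u_1,u_2,u_3)$ near $t=0$ depending continuously on the data, and the three free real parameters $f_{1,1}^+,f_{2,1}^+,f_{3,1}^+$ give the asserted 3-parameter family, each member an $\text{SU}(2)^2$-invariant $G_2$-instanton extending smoothly over $S^3$ on $P_1$. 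I expect the only genuinely delicate step to be the coefficient expansion: one must confirm that the singular part of every $P_i$ is precisely $-1/t$ (so the relevant eigenvalue is $-2$ and (ii) is satisfied) and track the constant terms accurately enough to read off the $u_i(0)$. Unlike the $\text{SU}(2)^3$ cases of Section \ref{sec:SU(2)^3}, no forward-invariance or explicit-solution argument is available here, so the conclusion is existence only in a neighbourhood of the singular orbit.
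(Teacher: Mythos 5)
Your overall strategy is the same as the paper's: reduce to a singular initial value problem of the form $\dot y = \tfrac1t M_{-1}(y)+M(t,y)$ and apply Theorem \ref{thm:4.7}, with the indicial computation showing the linear part of $M_{-1}$ is $-2\,\mathrm{Id}$ so that hypothesis (ii) holds and hypothesis (i) fixes the $u_i(0)$. Your expansion of the coefficients $P_i$ and the resulting values $u_i(0)=\beta_i/2$ are correct (and in fact agree with the values derived in the paper's proof, which carry a factor $\tfrac12$ on the product term $f_{j,1}^+f_{k,1}^+$). The one genuine difference in route is that you discard the $f_i^-$ at the outset by observing that $\{f_1^-=f_2^-=f_3^-=0\}$ is flow-invariant and then run the IVP on the reduced three-dimensional system, whereas the paper keeps all six unknowns, writes $f_i^-=t^2v_i(t)$, obtains $M_{-1}$ with diagonal part $\mathrm{diag}(-2,-2,-2,-6,-6,-6)$, and finds that hypothesis (i) \emph{forces} $v_i(0)=0$; uniqueness then yields $f_i^-\equiv 0$. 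Your version proves existence of the stated family; the paper's six-dimensional version additionally shows that the vanishing of the $f_i^-$ is not a choice but a consequence of the $P_1$ boundary conditions, i.e.\ that within this ansatz there are no further parameters hiding in the $f_i^-$. Both are legitimate, but the six-dimensional computation is what justifies the phrase ``these instantons have $f_i^-=0$'' as a derived property rather than an imposed one.

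There is one step you cannot simply assume: the parity of the solution. Lemma \ref{lemma:extensionSU(2)^2} requires the $f_i^+$ to be odd (equivalently the $u_i$ even) for the connection to extend smoothly over $S^3$, and Theorem \ref{thm:4.7} only delivers a unique smooth solution, not an even one. You build evenness into your ansatz but never verify it. The paper closes this by noting that each coefficient $F_i(t)=\dot A_i/A_i+A_i/(B_jB_k)-A_i/(A_jA_k)$ is odd, so that $(-f_1^+(-t),-f_2^+(-t),-f_3^+(-t))$ is again a solution of (\ref{eq:fgh}) with the same leading data $f_{i,1}^+$ and the same $\tilde u_i(0)=u_i(0)$; uniqueness in Theorem \ref{thm:4.7} then forces $f_i^+(-t)=-f_i^+(t)$, which gives the required parity. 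You should add this reflection argument; without it the smooth extension across the singular orbit is not established.
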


\begin{proof}
By Lemma \ref{lemma:extensionSU(2)^2}, we write
$$
f_i^+=f_{i,1}^+ t + t^3 u_i(t), \quad f_i^-= t^2 v_i(t),\\
$$
for some real analytic functions $u_i, v_i : (0,\infty) \rightarrow \R$, $i=1,2,3$. The new initial value problem for $X(t) =(u_1(t), u_2(t), u_3(t), v_1(t), v_2(t), v_3(t))^T$ can be written as a singular IVP:
\begin{equation}\label{eq:sivp}
\dfrac{dX}{dt}= \dfrac{M_{-1}(X)}{t} +M(t,X), \quad X(0) =X_0,
\end{equation}
where $M(t,X)$ is real analytic on the first coordinate and
\[
M_{-1}(X)= 
\begin{pmatrix}
\begin{array}{c}
-2u_1 -\left( \dfrac{1}{2b_0^2} + 4 a_{2,3}+4 a_{3,3} \right) f_{1,1}^+ - f_{2,1}^+ f_{3,1}^+ \\
-2u_2 -\left( \dfrac{1}{2b_0^2} + 4 a_{1,3}+4 a_{3,3} \right) f_{2,1}^+ - f_{1,1}^+ f_{3,1}^+ \\
-2u_3 -\left( \dfrac{1}{2b_0^2} + 4 a_{1,3}+4 a_{2,3} \right) f_{3,1}^+ - f_{1,1}^+ f_{2,1}^+ \\
-6v_1 \\
-6v_2 \\
-6v_3 \\
\end{array}
\end{pmatrix}.
\]
To show existence and uniqueness of solutions, we use \cite[Theorem 4.7]{FH17}. 
This theorem guarantees the existence and uniqueness of short-time solutions to (\ref{eq:sivp}) in a neighbourhood of the singular orbit if $M_{-1}(0)=0$ and $h \text{Id} - d_{X_0} M_{-1}$ is invertible for all integers $h \geq 1$.
We see that $d M_{-1}(X(0))=\text{diag}(-2, -2, -2, -6, -6, -6)$, so the second condition applies. The first condition implies that $v_1(0) =v_2(0)=v_3(0)=0$ and that
\begin{equation}
\begin{array}{ll}
u_1(0) =-\left( \dfrac{1}{4b_0^2} + 2 a_{2,3}+2 a_{3,3} \right) f_{1,1}^+ - \dfrac{f_{2,1}^+ f_{3,1}^+}{2}, \\
u_2(0) =-\left( \dfrac{1}{4b_0^2} + 2 a_{1,3}+2 a_{3,3} \right) f_{2,1}^+ - \dfrac{f_{1,1}^+ f_{3,1}^+}{2}, \\
u_3(0) =-\left( \dfrac{1}{4b_0^2} + 2 a_{1,3}+2 a_{2,3} \right) f_{3,1}^+ - \dfrac{f_{1,1}^+ f_{2,1}^+}{2}.
\end{array}
\end{equation}
If the previous equation holds, then the Theorem guarantees the existence of a solution on a neighbourhood of the singular orbit.
The solution of (\ref{eq:fgh}) and $f_1^-=f_2^-=f_3^-=0$ solves the IVP, so by uniqueness all $f_i^-$ must vanish.
Remains to show that all $f_i^+$ are odd. We argue that
\begin{equation}\label{eq:Fi}
F_i(t)= \dfrac{\dot{A}_i}{A_i} + \dfrac{A_i}{B_j B_k} - \dfrac{A_i}{A_j A_k}
\end{equation}
is odd, from where we deduce that $(-f_1^+(-t), -f_2^+(-t), -f_3^+(-t))$ is a solution of (\ref{eq:fgh}). We write 
$$
-f_i^+(-t) = f^+_{i,1} t + t^3 \tilde{u}_i(t),
$$
and find $\tilde{u}_i$ with $\tilde{u}_i(0)=u_i(0)$ solving (\ref{eq:fgh}), so we can smoothly extend $f_i^+$ to $t<0$ by $f_i^+(-t)= -f_i^+(t)$ also solving (\ref{eq:fgh}), giving the desired parity conditions which guarantee the smooth extension of the instantons.
The $G_2$-instantons obtained are then parameterized by three constants $f_{1,1}^+, f_{2,1}^+, f_{3,1}^+ \in \R$.
\end{proof}

\subsection{Extension on \texorpdfstring{$P_\text{id}$}{Pid}}\label{sec:Pid}

\begin{prop}
Let $M= \R^4 \times S^3$, with coclosed $G_2$-structure given by $A_1, A_2, A_3$ as in \cite[Theorem 4.9]{Alo22}, and let $S^3 \subset M$ be the singular orbit. 
There is a 3-parameter family of $\text{SU}(2)^2$-invariant $G_2$-instantons with gauge group $\text{SU}(2)$ in a neighbourhood of $S^3$ on the bundle $P_{\text{id}}$.
In the notation of Proposition \ref{prop:Anotation}, these instantons have $f_i^\pm$ solving the system of ODEs (\ref{eq:fgh+-}).
\end{prop}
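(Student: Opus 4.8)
The plan is to follow the same strategy as the proof on $P_1$: use Lemma~\ref{lemma:extensionSU(2)^2} to convert (\ref{eq:fgh+-}) into a singular initial value problem and apply Theorem~\ref{thm:4.7}. The decisive new feature, which I expect to be the main obstacle, is a \emph{resonance} in the linearisation that makes the non-resonant Theorem~\ref{thm:4.7} inapplicable as stated and is the source of the extra parameters. First I would invoke the $P_{\mathrm{id}}$ branch of Lemma~\ref{lemma:extensionSU(2)^2} to write
\[
f_i^+ = \frac{2}{t} + t\,u_i(t), \qquad f_i^- = \beta_i + t^2 v_i(t),
\]
with $u_i,v_i$ even and real analytic. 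Substituting into (\ref{eq:fgh+-}) and using $A_i = t/2 + a_{i,3}t^3 + O(t^5)$ and $B_i = b_0 + O(t^2)$, I would expand the coefficient functions to $F_i = -1/t + O(t)$ (with $F_i$ as in (\ref{eq:Fi})) and $G_i = \dot B_i/B_i + B_i/(B_jA_k) + B_i/(A_jB_k) = 4/t + O(t)$. Matching the most singular $t^{-1}$ terms in the $f_i^-$ equations forces $2\beta_i = \beta_j + \beta_k$ for every cyclic $\{i,j,k\}$, hence $\beta_1=\beta_2=\beta_3=:\beta$ (reflecting $B_1(0)=B_2(0)=B_3(0)=b_0$). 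With this common value the system becomes a singular IVP $\tfrac{dX}{dt} = \tfrac{1}{t}M_{-1}(X) + M(t,X)$ for $X=(u_1,u_2,u_3,v_1,v_2,v_3)^T$, with $M$ real analytic in $t$.

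Next I would compute the linearisation $L = d_{X_0}M_{-1}$. The $u$-block is $\begin{pmatrix} 0 & -2 & -2 \\ -2 & 0 & -2 \\ -2 & -2 & 0\end{pmatrix}$, with eigenvalue $-4$ on the diagonal direction $(1,1,1)$ and eigenvalue $2$ of multiplicity two on the sum-zero subspace; the $v$-block is $-8\,\mathrm{Id}+2J$, where $J$ is the $3\times 3$ all-ones matrix, with eigenvalues $-2,-8,-8$. The coupling from $M_{-1}$ is strictly block lower-triangular, so it does not affect the spectrum. Condition (i) of Theorem~\ref{thm:4.7}, namely $M_{-1}(X_0)=0$, then determines $X_0$ uniquely in terms of $\beta$; here I would check that the resulting linear coefficient of $f_i^+$ is independent of $i$, as demanded by Lemma~\ref{lemma:extensionSU(2)^2}.

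The main obstacle, and the genuine difference from the $P_1$ case, is the positive integer eigenvalue $h=2$ of multiplicity two: condition (ii) of Theorem~\ref{thm:4.7} fails. This is precisely the mode that is absent in the $\mathrm{SU}(2)^3$-invariant reduction of Proposition~\ref{prop:Pid}, where $u_1=u_2=u_3$ collapses the sum-zero eigenspace; that is why the symmetric problem yielded only one parameter. I would therefore pass to the resonant refinement of the Malgrange theory \cite{Mal74} underlying Theorem~\ref{thm:4.7}: writing $X(t)=\sum_{n\ge 0}X_n t^n$, the equation at order $n$ reads $(n\,\mathrm{Id}-L)X_n = R_n$ with $R_n$ depending only on lower coefficients. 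For $n\ne 2$ this determines $X_n$, while at $n=2$ the kernel of $2\,\mathrm{Id}-L$ is the two-dimensional sum-zero $u$-subspace. The key point to verify is the Fredholm solvability condition $R_2 \perp \ker(2\,\mathrm{Id}-L^T)$, so that no logarithmic terms are forced; since $L^{uu}$ is symmetric and $2$ is not an eigenvalue of the $v$-block, this amounts to the sum-zero part of the $u$-component of $R_2$ vanishing, and I expect it to follow from the structure together with the parity below. Granting this, $X_2$ is free up to its kernel component, producing two parameters $\rho_1,\rho_2$ (the sum-zero part of the $t^3$-coefficients of the $f_i^+$), and convergence of the series is supplied by Malgrange's theorem. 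Together with $\beta$ these give the three claimed parameters.

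Finally, for smooth extension over $S^3$ I would reuse the parity argument of Proposition~\ref{prop:Pid}: since $F_i$ and $G_i$ are odd, the reflection $(f_i^+,f_i^-)(t)\mapsto(-f_i^+(-t),f_i^-(-t))$ maps solutions of (\ref{eq:fgh+-}) to solutions. Choosing $\rho_1,\rho_2$ compatibly with this symmetry (consistent with $u_i,v_i$ even), the solution satisfies $f_i^+$ odd and $f_i^-$ even, so by Lemma~\ref{lemma:extensionSU(2)^2} it extends smoothly as a connection on $P_{\mathrm{id}}$ in a neighbourhood of the singular orbit, completing the argument.
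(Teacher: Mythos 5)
Your setup is consistent with the paper's (your linearisation, with $u$-block $2I-2J$ and $v$-block $-8I+2J$, is exactly the paper's $dM_{-1}$ shifted by $+2$ on the $u$-part, reflecting your choice of ansatz), and you correctly locate the three parameters: $\beta=b_0^-$ together with the sum-zero part of the $t^3$-coefficients of the $f_i^+$. However, as written the argument has a genuine gap at its central step. Because you keep the full linear coefficient inside $u_i(0)$, you create a resonance at $h=2$ where Theorem \ref{thm:4.7} genuinely fails, and your way out rests on two things you do not supply: a precise statement of the ``resonant refinement'' of Malgrange's theorem (formal solution with free kernel parameters at the resonant order, plus convergence/Borel summability of the resulting series), and, more importantly, the Fredholm solvability condition $R_2\perp\ker(2\,\mathrm{Id}-L^{T})$ that rules out logarithmic terms. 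You say you ``expect it to follow from the structure together with the parity below,'' but parity cannot do this job: the parity/reflection argument presupposes an analytic (log-free) solution, and the equations are not symmetric under permuting $\{1,2,3\}$ since $A_1,A_2,A_3$ differ, so the vanishing of the sum-zero part of the $u$-component of $R_2$ is a nontrivial identity that must actually be computed. Without it you have not shown existence of smooth solutions, which is the whole content of the proposition. (A smaller slip: the unique solution of $M_{-1}(X_0)=0$ should give linear coefficients of the form $b_2^+-4a_{i,3}$ with common $b_2^+$, not coefficients literally independent of $i$.)

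The paper avoids the resonance altogether, and you could too. Lemma \ref{lemma:extensionSU(2)^2} already forces the $t^1$-coefficient of $f_i^+$ to be $(b_2^+-4a_{i,3})$ with a common constant $b_2^+$, so one writes $f_i^+=\tfrac{2}{t}+(b_2^+-4a_{i,3})t+t^3u_i$, $f_i^-=b_0^-+t^2v_i$. This introduces a $t^{-3}$ term $M_{-3}$ into the IVP, whose vanishing fixes $4b_2^+=(b_0^-)^2-b_0^{-2}$; after that the linearisation has eigenvalues $-8,-8,-6,-2,0,0$, no positive integers occur, and Theorem \ref{thm:4.7} applies verbatim. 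The two extra parameters then reappear not as resonant freedom but as the $2$-dimensional affine solution space of the degenerate condition $M_{-1}(X_0)=0$ (only $u_1(0)+u_2(0)+u_3(0)$ and the common value $v_i(0)$ are pinned). In effect the paper ``solves your resonance by hand'' at order $t^1$ and converts your order-$t^3$ kernel into the kernel of $d M_{-1}$ itself, where the quoted theorem already tolerates it. I would recommend restructuring your proof along these lines; alternatively, if you wish to keep your normalisation, you must state and prove the resonant version of the singular IVP theorem and explicitly verify the orthogonality of $R_2$ to the sum-zero subspace.
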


\begin{proof}
By Lemma \ref{lemma:extensionSU(2)^2}, we write
$$
f_i^+=\dfrac{2}{t}+ (b_2^+ - 4a_{i,3})t + t^3 u_i, \quad f_i^-=b_0^- + t^2 v_i,
$$
for some real analytic $u_i, v_i: (0,\infty) \rightarrow \R$, $i=1,2,3$.
Note that $v_1(0)=v_2(0)=v_3(0)$.
We can write equations (\ref{eq:fgh+-}) as a system of equations for \\
$X(t) =(u_1(t), u_2(t), u_3(t), v_1(t), v_2(t), v_3(t))^T$ which takes the form of an initial value problem:
$$
\dfrac{dX}{dt}= \dfrac{M_{-3}(b_0^-, b_2^+)}{t^3} + \dfrac{M_{-1}(X)}{t} +M(t,X).
$$
We can compute
$$
M_{-3}(b_0^-, b_2^+)= \left( -4b_2^+ + (b_0^-)^2 -\dfrac{1}{b_0^2}, -4b_2^+ + (b_0^-)^2 -\dfrac{1}{b_0^2}, -4b_2^+ + (b_0^-)^2 -\dfrac{1}{b_0^2}, 0, 0, 0 \right)^T.
$$
We must require $M_{-3}(b_0^-, b_2^+)=0$. We achieve this by imposing the extra condition
$$
4b_2^+= (b_0^-)^2 -\dfrac{1}{b_0^2}.
$$
We take $b_2^+$ such that this holds.
We now want to apply \cite[Theorem 4.7]{FH17} to the IVP
$$
\dfrac{dX}{dt}= \dfrac{M_{-1}(X)}{t} +M(t,X).
$$
The eigenvalues of $d M_{-1}$ are -8, -8, -6, -2, 0, 0, so condition (ii) of \cite[Theorem 4.7]{FH17} holds.
We also need condition (i) to hold, so we impose that $M_{-1}(X(0))=0$. We have
\[
M_{-1}(X(0))= 
\begin{pmatrix}
\begin{array}{c}
-2(u_1(0) + u_2(0) + u_3(0)) +b_0^-(v_2(0) +v_3(0)) \\
-2(u_1(0) + u_2(0) + u_3(0)) +b_0^-(v_1(0) +v_3(0)) \\
-2(u_1(0) + u_2(0) + u_3(0)) +b_0^-(v_1(0) +v_2(0)) \\
-6v_1(0) +2v_2(0) +2v_3(0) \\
+2v_1(0) -6v_2(0) +2v_3(0) \\
+2v_1(0) +2v_2(0) -6v_3(0) \\
\end{array}
\end{pmatrix}
+K,
\]
where $K$ is a constant. The computation of $K$ is more involved that in previous situations, as we need to consider Taylor expansions up to a higher order.
We consider the Taylor expansion of $A_i(t)$ up to order 5 of equation (\ref{eq:tayexpO(5)})
and then use it to compute 
$$
\dfrac{\dot{A}_1}{A_1} + \dfrac{A_1}{B_2 B_3} - \dfrac{A_1}{A_2 A_3} 
=-\dfrac{1}{t}+X_{1,1} t +X_{1,3} t^3 + O(t^5),
$$
where
$$
X_{1,1}=4 a_{2,3} +4 a_{3,3} + \dfrac{1}{2b_0^2},
$$
and
$$
X_{1,3}= \dfrac{a_{1,3}}{b_0} -\dfrac{b_2}{b_0^3} + 4(a_{1,5} + a_{2,5} +a_{3,5}) -8(a_{1,3}^2 +a_{2,3}^2 +a_{3,3}^2) +8( -a_{2,3} a_{3,3} + a_{1,3} a_{3,3} +a_{1,3} a_{2,3}).
$$
We have similar expressions for the other permutations of $\{ 1,2,3 \}$.
Recall that both $b_2, b_2^+$ can be written in terms of the data $b_0, a_{i,3}, a_{i,5}$: in particular, $b_2=1/8b_0 -b_0(a_{1,3} +a_{2,3} +a_{3,3})$ (see \cite[Lemma 4.7]{Alo22}).
The expression for $K$ that we obtain is the following:
\[
K= 
\begin{pmatrix}
\begin{array}{c}
-(b_2^+)^2 - \dfrac{b_2^+}{2b_0^2} +\dfrac{2b_2}{b_0^3} -8 (a_{1,5} + a_{2,5} +a_{3,5}) +16 (a_{1,3}^2 +a_{2,3}^2 +a_{3,3}^2) \\
-(b_2^+)^2 - \dfrac{b_2^+}{2b_0^2} +\dfrac{2b_2}{b_0^3} -8 (a_{1,5} + a_{2,5} +a_{3,5}) +16 (a_{1,3}^2 +a_{2,3}^2 +a_{3,3}^2) \\
-(b_2^+)^2 - \dfrac{b_2^+}{2b_0^2} +\dfrac{2b_2}{b_0^3} -8 (a_{1,5} + a_{2,5} +a_{3,5}) +16 (a_{1,3}^2 +a_{2,3}^2 +a_{3,3}^2) \\
2b_0^- b_2^+ - \dfrac{2b_2 b_0^-}{b_0} \\
2b_0^- b_2^+ - \dfrac{2b_2 b_0^-}{b_0} \\
2b_0^- b_2^+ - \dfrac{2b_2 b_0^-}{b_0} \\
\end{array}
\end{pmatrix}
.
\]
The solutions $X(0) =(u_1(0), u_2(0), u_3(0), v_1(0), v_2(0), v_3(0))^T$ to the non-homogeneous system of equations $M_{-1}(X(0))=0$ are given by
\begin{equation}\label{eq:v(0)}
v_1(0)=v_2(0)=v_3(0)=b_0^- b_2^+ - \dfrac{b_2 b_0^-}{b_0},
\end{equation}
and
\begin{equation}\label{eq:u(0)}
\begin{array}{ll}
u_1(0) + u_2(0) + u_3(0) 
=&-\dfrac{(b_2^+)^2}{2} - \dfrac{b_2^+}{4b_0^2} +\dfrac{b_2}{b_0^3} +(b_0^-)^2 b_2^+ - \dfrac{b_2 (b_0^-)^2}{b_0} \\
&-4 (a_{1,5} + a_{2,5} +a_{3,5}) +8 (a_{1,3}^2 +a_{2,3}^2 +a_{3,3}^2) .
\end{array}
\end{equation}
Hence, we can fix  any $u_2(0), u_3(0) \in \R$ and then $u_1(0)$ will be uniquely determined by equation (\ref{eq:u(0)}), so the solutions lie in a 2-parameter family.
Therefore, for each $b_0^-, u_2(0), u_3(0) \in \R$ there is a unique solution $X(t)$ giving a $G_2$-instanton. 
To guarantee the smoothness of the instantons found, it remains to check the parity of $f_i^\pm$ at $t=0$, i.e.\ that $f_i^+$ are odd and $f_i^-$ are even.
Again, we argue that $F_i(t)$ from (\ref{eq:Fi}) and
$$
G_i(t)= \dfrac{\dot{A}_i}{A_i} + \dfrac{A_i}{B_j B_k} - \dfrac{A_i}{A_j A_k}
$$
are odd, from where we deduce that \\
$(-f_1^+(-t), -f_2^+(-t), -f_3^+(-t), f_1^-(-t), f_2^-(-t), f_3^-(-t))$ is a solution of (\ref{eq:fgh+-}). We write 
$$
-f_i^+(-t)=\dfrac{2}{t}+ (b_2^+ - 4a_{i,3})t + t^3 \tilde{u}_i(t), \quad f_i^-(-t)=b_0^- +b_2^- t^2 \tilde{v}_i(t),
$$
and find $\tilde{u}_i, \tilde{v}_i$ with $\tilde{v}_i(0)=u_i(0)$, $\tilde{v}_i(0)=v_i(0)$, $i=1,2,3$, solving (\ref{eq:fgh+-}), so we can smoothly extend $f_i^+$ to $t<0$ by $f_i^+(-t)=-f_i^+(t)$, and $f_i^-$ to $t<0$ by $f_i^-(-t)=f_i^-(t)$ also solving (\ref{eq:fgh+-}), giving the desired parity conditions which guarantee the smooth extension of the instantons.
We have found three parameters $b_0^-, u_2(0), u_3(0) \in \R$ giving a smooth $G_2$-instanton in a neighbourhood of $S^3$.
\end{proof}

\begin{cor}
Let $M= \R^4 \times S^3$, with coclosed $G_2$-structure given by $A_1, A_2, A_3$ as in \cite[Theorem 4.9]{Alo22}, and let $S^3 \subset M$ be the singular orbit. 
Suppose there is an extra $\text{U}(1)$ symmetry, meaning that $A_2=A_3$.
Then there is a 2-parameter family of $\left( \text{SU}(2)^2 \times \text{U}(1) \right)$-invariant $G_2$-instantons with gauge group $\text{SU}(2)$ in a neighbourhood of $S^3$ on the bundle $P_{\text{id}}$.
\end{cor}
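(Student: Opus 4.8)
The plan is to obtain the $(\text{SU}(2)^2 \times \text{U}(1))$-invariant instantons as the subfamily of the three-parameter family constructed in the previous Proposition that is fixed by the extra $\text{U}(1)$, and then simply to count parameters. The first step is to identify how the additional $\text{U}(1)$ acts. When $A_2 = A_3$, the extra $\text{U}(1)$-symmetry of the $G_2$-structure rotates the two directions labelled $2$ and $3$ simultaneously, both on the base (on the $\eta_i^\pm$) and in $\mathfrak{su}(2)$ (on $T_2, T_3$). In the diagonal gauge of Proposition \ref{prop:Anotation}, a connection of the form (\ref{eq:Prop5.1}) is invariant under this $\text{U}(1)$ precisely when $f_2^\pm = f_3^\pm$. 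Thus it suffices to produce the solutions of (\ref{eq:fgh+-}) satisfying $f_2^\pm = f_3^\pm$ and extending smoothly on $P_{\text{id}}$.

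Next I would exploit the symmetry of the system. Under $A_2 = A_3$, both the ODE system (\ref{eq:fgh+-}) and the boundary expansions of Lemma \ref{lemma:extensionSU(2)^2} are invariant under the involution exchanging the indices $2$ and $3$. Hence the symmetric ansatz $f_2^\pm = f_3^\pm$ is consistent: the six equations collapse to the four equations governing $(f_1^\pm, f_2^\pm)$, and any solution with symmetric initial data remains symmetric. Concretely, in the notation $X = (u_1, u_2, u_3, v_1, v_2, v_3)$ of the previous Proposition, imposing $f_2^\pm = f_3^\pm$ amounts to the single condition $u_2(0) = u_3(0)$, since the equalities $v_1(0) = v_2(0) = v_3(0)$ already hold by (\ref{eq:v(0)}). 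The existence and uniqueness statement \cite[Theorem 4.7]{FH17}, applied exactly as before, then yields a unique smooth solution for each admissible symmetric initial value, and uniqueness forces $f_2^\pm = f_3^\pm$ (equivalently $u_2 = u_3$ and $v_2 = v_3$) for all $t$.

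Finally I would count parameters and invoke smoothness. The previous Proposition produced a three-parameter family indexed by $(b_0^-, u_2(0), u_3(0))$, with $u_1(0)$ determined through (\ref{eq:u(0)}); imposing the linear constraint $u_2(0) = u_3(0)$ leaves the two free parameters $b_0^-$ and the common value $u_2(0) = u_3(0)$. The parity argument of the previous Proposition (using that $F_i$ and $G_i$ are odd) applies verbatim, so these instantons extend smoothly across $S^3$ on $P_{\text{id}}$. The step I expect to require the most care is the identification of $\text{U}(1)$-invariance with the condition $f_2^\pm = f_3^\pm$: one must verify that the chosen $\text{U}(1)$ really acts by a simultaneous rotation of the $2,3$ slots and that, in the diagonal gauge, no off-diagonal or additional components can be generated, so that the $\text{U}(1)$-invariant connections are \emph{exactly} the symmetric ones counted above rather than a proper sub- or super-family.
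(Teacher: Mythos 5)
Your proposal is correct and follows essentially the same route as the paper: the paper's proof simply cites \cite[Proposition 8]{Lotay17} for the fact that the extra $\text{U}(1)$-symmetry forces $f_2^\pm = f_3^\pm$ and then sets $u_2(0)=u_3(0)$ in the preceding three-parameter family to obtain the two-parameter family. The only difference is that you argue the invariance condition and the propagation of the symmetric ansatz directly (via the $2\leftrightarrow 3$ involution and uniqueness in \cite[Theorem 4.7]{FH17}) rather than citing the reference, which is a harmless elaboration of the same argument.
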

\begin{proof}
By \cite[Proposition 8]{Lotay17}, the extra symmetry means that $f_2^\pm=f_3^\pm$. The result follows by having $u_2(0)=u_3(0)$ in the previous theorem.
\end{proof}

\begin{remark}
There is a mistake in \cite[Proposition 8]{Lotay17}, which claims that in the case of $\R^4 \times S^3$ and when the $G_2$-structure is also torsion-free, there is only a 1-parameter family (instead of 2-parameter family) of $\left( \text{SU}(2)^2 \times \text{U}(1) \right)$-invariant $G_2$-instantons with gauge
group $\text{SU}(2)$ in a neighbourhood of the singular orbit on the bundle $P_\text{id}$.
\end{remark}

We leave the study of the extension of these families of instantons away from a singular orbit to a future work.

\renewcommand{\refname}{Bibliography}
\bibliographystyle{alpha}
\bibliography{biblio}

\end{document}